\numberwithin{equation}{section}
\newcommand{\R}{{\mathbb R}}
\newcommand{\Z}{{\mathbb Z}}
\newcommand{\C}{{\mathbb C}}
\newcommand{\no}{\nonumber}
\newcommand{\be}{\begin{eqnarray}}
\newcommand{\ben}{\begin{eqnarray*}}
\newcommand{\en}{\end{eqnarray}}
\newcommand{\enn}{\end{eqnarray*}}
\newcommand{\pa}{\partial}
\newcommand{\ov}{\overline}
\newcommand{\G}{\Gamma}
\newcommand{\om}{\omega}
\newtheorem{theorem}{Theorem}[section]
\newtheorem{lemma}[theorem]{Lemma}
\newtheorem{remark}[theorem]{Remark}
\definecolor{rot}{rgb}{0.000,0.000,0.000}
\definecolor{rot1}{rgb}{0.000,0.000,0.000}
\begin{document}
\renewcommand{\theequation}{\arabic{section}.\arabic{equation}}
\begin{titlepage}
\title{\bf Boundary integral equation methods for the two dimensional fluid-solid interaction problem}


\author{
Tao Yin\ \ ({\sf taoyin\_cqu@163.com}) \ \\
 {\small College of Mathematics and Statistics, Chongqing University,
 Chongqing 401331, China }\\ \\
George C. Hsiao \ \ ({\sf  ghsiao@udel.edu})  \\
{\small Department of Mathematical Sciences, University of Delaware, Newark, DE 19716-2553, USA }\\ \\
Liwei Xu \ \ ({\sf xul@cqu.edu.cn}) \ \\
{\small College of Mathematics and Statistics, Chongqing University, Chongqing 401331, China} \\
{\small Institute of Computing and Data Sciences, Chongqing University, Chongqing 400044, China }
}
\date{}
\end{titlepage}
\maketitle
\vspace{.2in}

\begin{abstract}
This paper is concerned with boundary integral equation methods for solving the two-dimensional fluid-solid interaction problem. We reduce the problem to three differential systems of boundary integral equations via direct and indirect approaches. Existence and uniqueness results for variational solutions of  boundary integral equations equations are established. Since in  all these boundary variational formulations, the hypersingular boundary integral operator associated with the time-harmonic Navier equation is a dominated integral operator,  we also include a new regularization formulation for this hypersingular operator, which allows us to treat the hypersingular kernel by a wealkly singular kernel. Numerical examples are presented to verify and validate the theoretical results.

\vspace{.2in} {\bf Keywords:} Fluid-solid interaction problem, boundary integral equation method, Helmholtz equation, time-harmonic Navier equation.
\end{abstract}

\section{Introduction}

The fluid-solid interaction (FSI) problem can be simply described as an acoustic wave propagates in a  compressible fluid domain of infinite extent in which a bounded elastic body is immersed. The problem is to determine the scattered pressure field in the fluid domain as well as the displacement filed of the elastic body. It is of great importance in many fields of application including exploration seismology, oceanography,  and non-destructive testing, to name a few. Under the hypothesis of small amplitude oscillations both in the solid and fluid, the acoustic scattered pressure field and the elastic displacement satisfy the Helmholtz equation and time-harmonic Navier equation, respectively, together with appropriate transmission conditions across the fluid-solid interface. This problem encounters considerable mathematical challenges from both theoretical and computational points of views and has  been a subject of interest in both mathematical and engineering community for many years. In particular,  the unbounded domain in which the problem is imposed causes major difficulties  from computational point of view.  Our interest here is to develop efficient numerical methods for treating the two dimensional FSI problem. There are also some inverse FSI problems and FSI eigenvalue problems  being investigated in \cite{YHXZ16} and \cite{KSU14}, respectively.

One popular method to overcome the difficulty that the acoustic scattered wave propagates in an unbounded domain is known as the Dirichlet-to-Neumann (DtN) method (\cite{GHM09,YX,YRX}), that is, the original transmission problem is reduced to a boundary value problem by introducing a DtN mapping defined on an artificial boundary enclosing the elastic body inside. Another conventional numerical method is the coupling of the boundary element method (BEM) and the finite element method (FEM) (\cite{DSM12,DSM121,GMM07,GMM09,H94,HKR00,HKS89,MMS04}). Precisely, the BEM and FEM are employed for solving fields of the exterior acoustic wave and the interior elastic wave, respectively.

The boundary integral equation (BIE) methods for solving the scattering transmission problem including the acoustic transmission problem (\cite{CS85,HX11,KM88}), the elastic transmission problem (\cite{BLR14}), the electromagnetic transmission problem (\cite{BHPS03,CS88}) and the FSI problem (\cite{LM95}) have been extensively investigated for many years. One can derive the system of boundary integral equations (BIEs) equivalent with the original scattering problems by the direct method based on Green's formulation and the indirect method based on potential theory. In this paper, we derive three differential systems of BIEs for the solution of the FSI problem. For each system, we study the existence and uniqueness results for the weak solutions of corresponding variational equations. In addition to the so-called Jones frequency associated with the original transmission problem, it will be shown that only the first system of BIEs to be presented in Section $3$, for the purpose of uniqueness, need to exclude a spectrum of eigenvalues which is inherited from properties of boundary integral operators. Since all derived systems of BIEs are strongly elliptic, consisting of singular and hypersingular boundary integral operators,  appropriate regularization formulations are needed for the purpose of numerical computations. For the hypersingular boundary integral operator associated with Helmholtz equation, its expression can be transformed into one involving tangential rather than normal derivatives, see \cite{BM71,M49,M66} for details. With the help of the tangential G\"unter derivative (\cite{KGBB79}), variant representation of the hypersingular boundary integral operators associated with three dimensional Lam\'e equation is given in \cite{HW08}. In this paper, we will present an innovative and new regularization formulation for the hypersingular boundary integral operator associated with two dimensional elastodynamics and in the corresponding duality pairing form, only a weakly singular boundary integral operator is involved. Numerical results will be presented to illustrate efficiency of these systems of BIEs for  solutions of the FSI problem, and   accuracy of regularization formulation.

The remainder of the paper is organized as follows. We first describe the classical FSI problem in Section \ref{sec:2}. Using the direct and indirect approaches, and the Burton-Miller formulation based on direct method, we reduce the original problem to three different systems of coupled boundary integral equations in Section \ref{sec:3}, \ref{sec:4} and \ref{sec:5}, respectively. In each section, we also present the corresponding variational formulations of these systems, and carry out the uniqueness and existence analysis for the weak solution of the variational equations. In Section \ref{sec:6}, we present an innovative regularization formulation for the hypersingular boundary integral operator associated with the time-harmonic Navier equation and postpone the derivation in Appendix. The corresponding variational equations are reduced to discrete linear systems of equations by Galerkin boundary element method in Section \ref{sec:7}. In Section \ref{sec:8}, we present several numerical tests to confirm our theoretical results and verify the efficiency and accuracy of the Galerkin boundary element method. In closing the paper some conclusions and remarks for future work are presented in Section \ref{sec:9}.

\section{Statement of the problem}
\label{sec:2}
Let $\Omega\subset \R^2$ be a bounded, simply connected domain with sufficiently smooth boundary $\Gamma=\partial\Omega$,  and its exterior complement is denoted by $\Omega^c= \R^2\setminus\overline{\Omega}\subset \R^2$. The domain $\Omega$ is occupied by a linear and isotropic elastic solid, and $\Omega^c$ is filled with compressible, inviscid fluids. We denote by $\omega$ the frequency, $k=\omega/c$ the acoustic wave number, $c$ the speed of sound in the fluid,  $\rho$ the density of the solid and $\rho_f$ the density of the fluid. The problem we will solve is to determine the elastic displacement ${\bf u}$ in the solid and the acoustic scattered {pressure} field $p$ in the fluid {with a given} incident field $p^{inc}$. {The problem states} as follows: {\it Given $p^{inc}$, find ${\bf u}\in (C^2(\Omega)\cap C^1(\overline\Omega))^2$ and $p\in C^2(\Omega^c)\cap C^1(\overline{\Omega^c})$ satisfying :}
\be
\label{Eq:Navier}
\Delta^{*}{\bf u} +   \rho \omega^2{\bf u} &=& {\bf 0}\quad \text{in}\quad \Omega, \\
\label{Eq:Helmholtz}
\Delta p + k^2p &=& 0\quad\, \text{in}\quad \Omega^c,
\en
{together with the transmission conditions}
\begin{eqnarray}
\label{Eq:TransCond1}
\eta{\bf u}\cdot {\bf n} &=& \frac{\partial}{\partial n}(p+p^{inc}) \quad \text{on}\quad \Gamma,\\
\label{Eq:TransCond2}
{\bf t} &=& -{\bf n}(p + p^{inc}) \quad \text{on}\quad \Gamma,
\end{eqnarray}
 and the Sommerfeld radiation condition
\be
\label{Eq:RadiationCond}
\lim_{r \to \infty} r^{\frac{1}{2}}\left(\frac{\partial p}{\partial r}-ikp\right) = 0,\quad r=|x|.
\en
Here, $\partial/\partial n$ is the normal derivative on $\Gamma$ (here and in the sequel, ${\bf n}$ is always the outward unit normal to the boundary), $i=\sqrt{-1}$ is the imaginary unit, $x=(x_1,x_2)\in \R^2$, $\eta=\rho_f\omega^2$. $\Delta^{*}$ is the Lam\'{e} operator defined by
\ben
\Delta^*\, {:=} \, \mu\Delta + (\lambda + \mu) \, grad \,div,
\enn
where, $\lambda,\mu$ are Lam\'e constants such that  {$\mu\,>\,0$ and $\lambda\, + \mu > 0$.} In addition, ${\bf t} = {\bf T}{\bf u}$ and ${\bf T}$ is the traction operator on the boundary defined by
\ben
{\bf T}{\bf u} \, {:= \lambda \,(div {\bf u})\, {\bf n} + 2\mu\frac{\partial{\bf u}}{\partial n} } + \mu\, {\bf n}\times  {curl}\, {\bf u}.
\enn

\begin{figure}[htbp]
\centering
\includegraphics[scale=1.5]{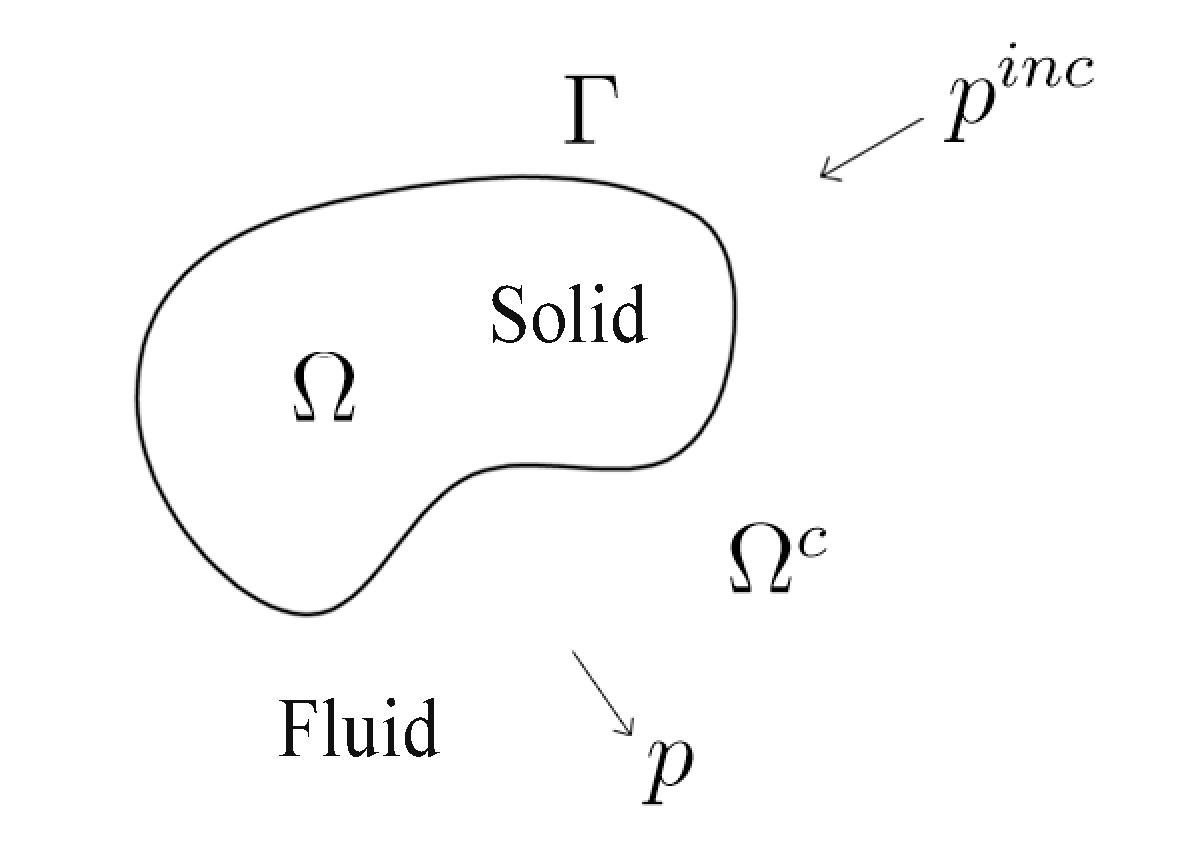}
\caption{Boundary value problem \eqref{Eq:Navier}--\eqref{Eq:RadiationCond}.}
\end{figure}

It is known (\cite{JDS83}) that, for certain geometries and some frequencies $\omega$, the problem \eqref{Eq:Navier}--\eqref{Eq:RadiationCond} is not always uniquely solvable due to the occurrence of so-called traction free oscillations. These $\omega$ are also known as the Jones frequencies which are inherent to the original model. {We state without proof of the following result}:
\begin{theorem} \label{th:2.1}
If the surface $\Gamma$ and the material parameters $(\mu,\lambda,\rho)$ are such that there are no traction free solutions, the boundary value problem \eqref{Eq:Navier}--\eqref{Eq:RadiationCond} has at most one solution, { provided $Im\, k = 0$.} Here, we call a nontrivial ${\bf u}_0$ a traction free solution if it solves
\ben
\Delta^{*}{\bf u}_0 + \rho\omega^2{\bf u}_0 &=& {\bf 0} \quad\mbox{in}\quad \Omega,  \\
{\bf T}{\bf u}_0 &=&{\bf 0}\quad \mbox{on}\quad\Gamma, \\
{\bf u}_0\cdot{\bf n} &=& 0\quad\mbox{on}\quad\Gamma.
\enn
\end{theorem}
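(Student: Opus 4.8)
The plan is to establish uniqueness by the classical combination of Betti's and Green's first identities with Rellich's lemma. By linearity of \eqref{Eq:Navier}--\eqref{Eq:RadiationCond} it suffices to take $p^{inc}=0$ and show that the only solution is $({\bf u},p)=({\bf0},0)$. First I would apply the first Betti identity to ${\bf u}$ in $\Omega$: with the (real, symmetric) elastic energy form $\E(\cdot,\cdot)$ associated with $\Delta^*$ and ${\bf T}$,
\[
\int_\Omega\bigl(\Delta^*{\bf u}\cdot\bar{\bf u}+\E({\bf u},\bar{\bf u})\bigr)\,dx=\int_\Gamma {\bf T}{\bf u}\cdot\bar{\bf u}\,ds .
\]
Inserting $\Delta^*{\bf u}=-\rho\om^2{\bf u}$ on the left and, on the right, the transmission conditions ${\bf t}=-{\bf n}p$ and ${\bf u}\cdot{\bf n}=\eta^{-1}\pa p/\pa n$ (recall $\rho,\om,\eta$ and ${\bf n}$ are real), this becomes
\[
-\rho\om^2\int_\Omega|{\bf u}|^2\,dx+\int_\Omega\E({\bf u},\bar{\bf u})\,dx=-\frac{1}{\eta}\int_\Gamma p\,\overline{\frac{\pa p}{\pa n}}\,ds .
\]
Since the left-hand side is real, taking imaginary parts gives $\Ima\int_\Gamma p\,\overline{\pa p/\pa n}\,ds=0$, i.e. $\Ima\int_\Gamma \overline p\,(\pa p/\pa n)\,ds=0$.

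Next I would carry out the analogous computation for $p$ in the exterior. Fix a ball $B_R$ with $\overline\Omega\subset B_R$; applying Green's first identity in $\Omega^c\cap B_R$ and noting that the exterior unit normal of $\Omega^c$ along $\Gamma$ is $-{\bf n}$, one obtains
\[
-k^2\int_{\Omega^c\cap B_R}|p|^2\,dx+\int_{\Omega^c\cap B_R}|\nabla p|^2\,dx=\int_{\pa B_R}\frac{\pa p}{\pa r}\,\bar p\,ds-\int_\Gamma\frac{\pa p}{\pa n}\,\bar p\,ds .
\]
Because $\Ima k=0$ the left-hand side is real, and by the previous step the $\Gamma$-integral on the right has zero imaginary part; hence $\Ima\int_{\pa B_R}(\pa p/\pa r)\,\bar p\,ds=0$ for every large $R$. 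Expanding $\int_{\pa B_R}|\pa p/\pa r-ikp|^2\,ds$ and combining this identity with the Sommerfeld radiation condition \eqref{Eq:RadiationCond} yields $\int_{\pa B_R}|p|^2\,ds\to0$ as $R\to\infty$. Rellich's lemma then forces $p\equiv0$ in $\R^2\setminus\overline{B_R}$, and since $\Omega^c$ is connected and solutions of the Helmholtz equation are real-analytic, unique continuation gives $p\equiv0$ in $\Omega^c$.

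Finally, $p\equiv0$ makes both Cauchy data of $p$ vanish on $\Gamma$, so the transmission conditions \eqref{Eq:TransCond1}--\eqref{Eq:TransCond2} reduce to ${\bf T}{\bf u}={\bf t}=-{\bf n}p={\bf0}$ and ${\bf u}\cdot{\bf n}=\eta^{-1}\pa p/\pa n=0$ on $\Gamma$. Thus ${\bf u}$ satisfies $\Delta^*{\bf u}+\rho\om^2{\bf u}={\bf0}$ in $\Omega$, ${\bf T}{\bf u}={\bf0}$ and ${\bf u}\cdot{\bf n}=0$ on $\Gamma$; that is, ${\bf u}$ is a traction free solution in the sense of Theorem \ref{th:2.1}. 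By hypothesis there is no nontrivial such solution, so ${\bf u}\equiv{\bf0}$, and uniqueness follows.

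The routine parts are the two integration-by-parts identities together with the bookkeeping of the normal orientation on $\Gamma$ and of the conjugations produced by the transmission conditions. The genuinely delicate step — and the place where the Sommerfeld condition \eqref{Eq:RadiationCond} enters essentially — is the passage from $\Ima\int_{\pa B_R}(\pa p/\pa r)\bar p\,ds=0$ to $p\equiv0$ throughout the unbounded fluid region, which relies on Rellich's lemma and on unique continuation for the Helmholtz equation.
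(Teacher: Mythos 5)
Your argument is correct and follows exactly the route the paper intends: the paper defers the proof to \cite{HKR00} and says it rests on Lemma \ref{le:2.2} (that $p\equiv 0$ for the homogeneous problem), which is precisely what you establish via the Betti/Green energy identities, the Sommerfeld condition and Rellich's lemma, before invoking the no-traction-free-solution hypothesis to kill ${\bf u}$. This is also the same machinery the authors themselves use in the proof of Theorem \ref{Theorem4.1}, so nothing further is needed.
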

{The proof is given in \cite{HKR00} and is based on a standard uniqueness result for the transmission problem in scattering below.
\begin{lemma} \label{le:2.2}
If $({\bf u}, p) $ is a classical solution of the corresponding homogeneous problem of   \eqref{Eq:Navier}-\eqref{Eq:RadiationCond} for $Im\,k = 0$ with $p^{inc} = 0 $, then $ p \equiv 0.$
\end{lemma}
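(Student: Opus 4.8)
The plan is to use the standard Rellich–Sommerfeld argument in the fluid together with the Betti/Green identities in the solid, the coupling being supplied by the homogeneous transmission conditions. First I would test the Helmholtz equation \eqref{Eq:Helmholtz} for $p$ against $\overline p$ over the exterior domain truncated at a large circle $S_R = \{|x| = R\}$. Integration by parts gives
\ben
\int_{S_R} \overline p\,\frac{\partial p}{\partial r}\,ds - \int_{\Gamma} \overline p\,\frac{\partial p}{\partial n}\,ds = \int_{\Omega^c \cap B_R} \bigl(|\grad p|^2 - k^2|p|^2\bigr)\,dx,
\enn
so that $\Ima\!\int_{S_R}\overline p\,\partial_r p\,ds = \Ima\!\int_{\Gamma}\overline p\,\partial_n p\,ds$ (note ${\bf n}$ points out of $\Omega$, i.e. into $\Omega^c$, so one must keep track of the sign convention consistent with \eqref{Eq:TransCond1}--\eqref{Eq:TransCond2}).

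Next I would rewrite the boundary term using the homogeneous transmission conditions. With $p^{inc}=0$, \eqref{Eq:TransCond1} gives $\partial_n p = \eta\,{\bf u}\cdot{\bf n}$ on $\Gamma$ and \eqref{Eq:TransCond2} gives ${\bf t} = {\bf T}{\bf u} = -{\bf n}\,p$ on $\Gamma$. Hence
\ben
\int_{\Gamma}\overline p\,\frac{\partial p}{\partial n}\,ds = \eta\int_{\Gamma}\overline p\,({\bf u}\cdot{\bf n})\,ds = -\eta\int_{\Gamma}(\overline{\bf T}\overline{\bf u}\cdot{\bf n})\,({\bf u}\cdot{\bf n})\cdot\frac{1}{?}\,ds,
\enn
more cleanly: from ${\bf T}{\bf u} = -{\bf n}p$ we get $p = -{\bf n}\cdot{\bf T}{\bf u}$ componentwise, so $\overline p\,({\bf u}\cdot{\bf n}) = -(\overline{\bf T}\overline{\bf u}\cdot{\bf n})({\bf u}\cdot{\bf n}) = -\overline{\bf T}\overline{\bf u}\cdot{\bf u}$ after using ${\bf T}{\bf u}\parallel{\bf n}$. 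Thus $\int_\Gamma \overline p\,\partial_n p\,ds = -\eta\int_\Gamma \overline{\bf T}\overline{\bf u}\cdot{\bf u}\,ds$. Now I apply the first Betti identity to ${\bf u}$ on $\Omega$: using \eqref{Eq:Navier},
\ben
\int_{\Gamma}\overline{\bf T}\overline{\bf u}\cdot{\bf u}\,ds = \int_{\Omega}\Bigl(\mathcal{E}({\bf u},\overline{\bf u}) - \rho\omega^2|{\bf u}|^2\Bigr)dx,
\enn
where $\mathcal{E}$ is the (real, symmetric) elastic energy density. Since $\eta=\rho_f\omega^2>0$ and $\rho,\omega,k$ are real, the quantity $\int_\Gamma\overline p\,\partial_n p\,ds = -\eta\int_\Omega(\mathcal{E}({\bf u},\overline{\bf u}) - \rho\omega^2|{\bf u}|^2)\,dx$ is \emph{real}. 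Therefore $\Ima\!\int_{S_R}\overline p\,\partial_r p\,ds = 0$ for every large $R$, and combining this with the Sommerfeld condition \eqref{Eq:RadiationCond} in the usual way (Rellich's lemma) forces $p\equiv 0$ in $\Omega^c$.

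The main obstacle — really the only delicate point — is bookkeeping of signs and of the orientation of ${\bf n}$ in the two Green-type identities, so that the fluid boundary term and the solid boundary term match up to produce a purely real (in fact sign-definite-enough) contribution; this is what makes the imaginary part on $S_R$ vanish. Once $p\equiv 0$ is established, I would note for completeness (though the lemma only claims $p\equiv0$) that \eqref{Eq:TransCond2} then gives ${\bf T}{\bf u}=0$ and \eqref{Eq:TransCond1} gives ${\bf u}\cdot{\bf n}=0$ on $\Gamma$, i.e. ${\bf u}$ is a traction free solution in the sense of Theorem~\ref{th:2.1}; under the hypotheses there it also vanishes, but that is not needed here. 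I would cite \cite{HKR00,JDS83} for the classical details of the Rellich step.
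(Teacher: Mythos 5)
Your argument is correct and is exactly the classical Rellich/energy argument that the paper itself invokes for this lemma (it states it without proof, citing \cite{HKR00}) and reproduces mutatis mutandis in the proof of Theorem \ref{Theorem4.1}: the transmission conditions turn $\int_\Gamma \overline p\,\partial_n p\,ds$ into $-\eta$ times the real Betti energy of ${\bf u}$ on $\Omega$, so $\Ima\int_{S_R}\overline p\,\partial_r p\,ds=0$ and the Sommerfeld condition plus Rellich's lemma give $p\equiv 0$. The only blemish is the stray ``$\tfrac{1}{?}$'' in the intermediate display, but the clean recomputation you give immediately afterwards is the right one, and all signs (including the orientation of ${\bf n}$ on $\Gamma$ as seen from $\Omega^c$) check out.
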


Through out the paper, we always assume that $\omega>0$, $k>0$.}
\section{Direct method} \label{sec:3}
In this section, the transmission problem \eqref{Eq:Navier}-\eqref{Eq:RadiationCond} is reduced to a system of coupled boundary integral equations consisting of four basic boundary integral operators based on direct approach. This system of boundary integral equations is then converted into its weak formulation for the study of uniqueness and existence of the weak solution.

\subsection{Boundary integral equations} \label{sec:3.1}
Classical solutions ${\bf u}$ and $p$ can be represented by boundary integral equations via the Green's representation formula and the fundamental displacement tensor ${\bf E}(x,y)$ of the time-harmonic Navier equation \eqref{Eq:Navier}  as well as the fundamental solution $\gamma_k(x,y)$ of the Helmholtz equation \eqref{Eq:Helmholtz} in $\R^2$.  In terms of the classical analysis, $\bf{u}$ and $p$ read
\be
\label{Eq:DirectBRF1}
{\bf u}(x)  &=&  \int_{\Gamma}{\bf E}(x,y){\bf t}(y)\,ds_y - \int_{\Gamma}({\bf T}_y{\bf E}(x,y))^\top{\bf u}(y)\,ds_y, \quad \forall\,x\in\Omega,\\
\label{Eq:DirectBRF2}
p(x)  &=&  \int_{\Gamma}\frac{\partial \gamma_k }{\partial n_y}(x,y)p(y)\,ds_y - \int_{\Gamma} \gamma_k(x,y)\frac{\partial p}{\partial n_y}(y)\,ds_y, \quad\forall\,x\in \Omega^c,
\en
respectively. The fundamental solution of the Helmholtz equation \eqref{Eq:Helmholtz} in $\R^2$ takes the form
\be
\gamma_k(x,y) = \frac{i}{4}H_0^{(1)}(k|x-y|),\quad x\ne y.   \label{Eq:HelmholtzFS}
\en
Here, $H_0^{(1)}(\cdot)$ is the first kind  Hankel function of order $0$. We {denote by}
\ben
k_s = \omega\sqrt{\rho/\mu} \;\; \mbox{{and}} \; \; k_p = \omega \sqrt{\rho/(\lambda + 2\mu)}.
\enn
{respectively, the shear (or transverse) and the compressional (or longitudinal) elastic  wave numbers}. Then the fundamental displacement tensor ${\bf E}(x,y)$ can be written as
\be \label{Eq:NavierFS}
{\bf E}(x,y) = \frac{1}{\mu}\gamma_{k_s}(x,y){\bf I} + \frac{1}{\rho\omega^2}\nabla_x\nabla_x\left[\gamma_{k_s}(x,y) - \gamma_{k_p}(x,y)\right],\quad x\ne y.
\en
where  ${\bf I}$ denotes the identity matrix. Now, letting  $x$ in equations  \eqref{Eq:DirectBRF1}--\eqref{Eq:DirectBRF2}  approach to the boundary $\Gamma$ and applying the jump conditions, we obtain the corresponding boundary integral equations on $\Gamma$
\be
{\bf u}(x) &=& V_{s}{\bf t}(x) + \left(\frac{1}{2}I - K_{s}\right){\bf u}(x),\quad x\in\Gamma,  \label{Eq:ElasticBIE}\\
p(x) &=& \left(\frac{1}{2}I + K_f\right)p(x) - V_{f}\frac{\partial p}{\partial n}(x),\quad x\in\Gamma.
\label{Eq:FluidBIE}
\en
Operating with the traction operator on \eqref{Eq:DirectBRF1}, computing the norm derivative for both sides of \eqref{Eq:DirectBRF2} and taking the limits as $x\to\Gamma$,  and applying the jump relations,  we are led to the following additional boundary integral equations on $\Gamma$
\be \label{Eq:ElasticBIE1}
{\bf t}(x) &=& \left(\frac{1}{2}I + K_s^{'}\right){\bf t}(x) + W_s{\bf u}(x),\quad x\in\Gamma, \\
\label{Eq:FluidBIE1}
\frac{\partial p}{\partial n}(x) &=& \left(\frac{1}{2}I - K_f^{'}\right)\frac{\partial p}{\partial n}(x) - W_fp(x),\quad x\in\Gamma.
\en
In boundary integral equations \eqref{Eq:ElasticBIE}--\eqref{Eq:FluidBIE1}, $I$ is the identity operator, the boundary integral operators related with the fluid are defined by
\ben
V_f\frac{\partial p}{\partial n}(x) &=& \int_{\Gamma} \gamma_k(x,y)  \frac{\partial p}{\partial n_y}(y)\,ds_y,\quad x\in\Gamma,\\
K_f p(x) &=& \int_{\Gamma}\frac{\partial \gamma_k}{\partial n_y}(x,y)p(y)\,ds_y, \quad x\in\Gamma,\\
K_f^{'}\frac{\partial p}{\partial n}(x) &=& \int_{\Gamma} \frac{\partial \gamma_k}{\partial n_x}(x,y) \frac{\partial p}{\partial n}(y)\,ds_y,\quad x\in\Gamma,\\
W_f p(x) &=& -\frac{\partial}{\partial n_x}\int_{\Gamma}\frac{\partial \gamma_k}{\partial n_y}(x,y)p(y)\,ds_y,\quad x\in\Gamma,
\enn
and the boundary integral operators for the elasticity are defined by
\ben
V_s{\bf t}(x) &=& \int_{\Gamma}{\bf E}(x,y) {\bf t}(y)\,ds_y,\quad x\in\Gamma,\\
K_s{\bf u}(x) &=& \int_{\Gamma}({\bf T}_y{\bf E}(x,y))^\top{\bf u}(y)\,ds_y,\quad x\in\Gamma,\\
K_s^{'}{\bf t}(x) &=& \int_{\Gamma}{\bf T}_x{\bf E}(x,y){\bf t}(y)\,ds_y,\quad x\in\Gamma,\\
W_s{\bf u}(x)  &=& -{\bf T}_x\int_{\Gamma}({\bf T}_y{\bf E}(x,y))^\top{\bf u}(y)\,ds_y,\quad x\in\Gamma.
\enn
In the above equations, neglect of  subindex  $s$ and $f$ on the operators { associated with} the solid and fluid, $V$, $K$, $K^{'}$ and $W$ are termed respectively, the single-layer, double-layer, transpose of double-layer and hypersingular boundary integral operators.

We combine boundary integral equations  \eqref{Eq:ElasticBIE1}--\eqref{Eq:FluidBIE1} together with transmission conditions  \eqref{Eq:TransCond1}--\eqref{Eq:TransCond2} to obtain a system of coupled boundary integral equations for a pair of unknown functions ${\bf u}$ and $p$.  Therefore, to eliminate $ {\bf t}$ and $ \partial p/ \partial n $,  we arrive at for $x\in\Gamma$
\be
W_s{\bf u}(x) + \left(\frac{1}{2}I - K_s^{'}\right)p{\bf n}(x) &=& \left(- \frac{1}{2}I + K_s^{'} \right)p^{inc}{\bf n}(x) := f_1,   \label{Eq:BIE1}\\
\eta\left(\frac{1}{2}I + K_f^{'}\right)({\bf u}\cdot {\bf n})(x) + W_f p(x)  &=& \left(\frac{1}{2}I + K_f^{'}\right)\frac{\partial p^{inc}}{\partial n}(x) := f_2. \label{Eq:BIE2}
\en
Since we assume that the interface $\Gamma$ is sufficiently smooth, the boundary integral operators are continuous mappings for the indicated function spaces below (\cite{HKR00, HW08})
\ben
W_s\quad&:&\quad (H^{s+1}(\Gamma))^2  \mapsto  (H^s(\Gamma))^2,\\
K_s^{'} \quad&:& \quad (H^s(\Gamma))^2 \mapsto (H^s(\Gamma))^2,\\
K_f^{'} \quad&:& \quad H^{s}(\Gamma) \mapsto  H^{s+1}(\Gamma),\\
W_f \quad&:&\quad H^{s+1}(\Gamma) \mapsto  H^s(\Gamma).
\enn

The unique solvability of \eqref{Eq:BIE1}--\eqref{Eq:BIE2} is given in the following theorem.
\begin{theorem}
\label{Theorem3.11}
If\\
(a) \, the surface $\Gamma$ and the material parameters $(\mu,\lambda,\rho)$ are such that there are no traction free solutions,\\
(b) \, $-k^2$ is not an eigenvalue of the interior Neumann problem for the Laplacian, \\
the system of boundary integral equations \eqref{Eq:BIE1}--\eqref{Eq:BIE2} is uniquely solvable.
\end{theorem}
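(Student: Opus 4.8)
The plan is to rewrite the system \eqref{Eq:BIE1}--\eqref{Eq:BIE2} as an operator equation $\mathcal A(\mathbf u,p)^\top=(f_1,f_2)^\top$ acting from $(H^{1/2}(\Gamma))^2\times H^{1/2}(\Gamma)$ into $(H^{-1/2}(\Gamma))^2\times H^{-1/2}(\Gamma)$, and to prove unique solvability in two steps: that $\mathcal A$ is Fredholm of index zero, and that it is injective. For the first step one splits $\mathcal A=\mathcal A_0+\mathcal C$, where $\mathcal A_0$ is the block-diagonal operator built from $W_s$ and $W_f$ (after multiplying the second equation by a fixed nonzero constant, if necessary, so that $\mathcal A_0$ satisfies a G\aa rding inequality on the product space) and $\mathcal C$ collects the off-diagonal terms. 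By the mapping properties recalled before the theorem, $(\tfrac12 I-K_s')p\,\mathbf n$ takes values in $(H^{1/2}(\Gamma))^2$ and $\eta(\tfrac12 I+K_f')(\mathbf u\cdot\mathbf n)$ takes values in $H^{1/2}(\Gamma)$, and these embed compactly into the respective target spaces, so $\mathcal C$ is compact; since $W_s$ and $W_f$ each satisfy a G\aa rding inequality on $(H^{1/2}(\Gamma))^2$ and on $H^{1/2}(\Gamma)$ modulo a compact operator, so does $\mathcal A_0$. Hence $\mathcal A$ is Fredholm of index zero and it remains only to prove that the homogeneous system has no nontrivial solution.

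For the injectivity step I would take $(\mathbf u,p)$ solving \eqref{Eq:BIE1}--\eqref{Eq:BIE2} with $f_1=f_2=0$ and, following the derivation of the system, set $\mathbf t:=-p\,\mathbf n$ and $\varphi:=\eta\,\mathbf u\cdot\mathbf n$ on $\Gamma$, and introduce for $x\in\R^2\setminus\Gamma$
\[\mathbf w(x):=\int_{\Gamma}\mathbf E(x,y)\mathbf t(y)\,ds_y-\int_{\Gamma}(\mathbf T_y\mathbf E(x,y))^\top\mathbf u(y)\,ds_y,\]
\[q(x):=\int_{\Gamma}\frac{\partial\gamma_k}{\partial n_y}(x,y)p(y)\,ds_y-\int_{\Gamma}\gamma_k(x,y)\varphi(y)\,ds_y.\]
Then $\mathbf w$ solves \eqref{Eq:Navier} in $\Omega$ and in $\Omega^c$, with the Kupradze radiation conditions in $\Omega^c$ (since $\mathbf E$ is assembled from radiating fundamental solutions), and $q$ solves \eqref{Eq:Helmholtz} in $\Omega$ and in $\Omega^c$, with \eqref{Eq:RadiationCond} in $\Omega^c$. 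Writing $\gamma^\pm$, $\mathbf T^\pm$, $\partial_n^\pm$ for the traces taken from $\Omega^c$ and from $\Omega$, the jump relations for the layer potentials — those already used to obtain \eqref{Eq:ElasticBIE}--\eqref{Eq:FluidBIE1} — turn the homogeneous \eqref{Eq:BIE1} into the single condition $\mathbf T^+\mathbf w=\mathbf 0$ on $\Gamma$ and the homogeneous \eqref{Eq:BIE2} into $\partial_n^-q=0$ on $\Gamma$, the cancellation resting on the identity $(\tfrac12 I+K')+(\tfrac12 I-K')=I$.

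Next I would remove these two ``wrong side'' extensions. The exterior Neumann problem for the Navier equation with the Kupradze radiation condition has only the trivial solution, so $\mathbf T^+\mathbf w=\mathbf 0$ gives $\mathbf w\equiv\mathbf 0$ in $\Omega^c$; and by hypothesis (b) the interior Neumann problem $\Delta q+k^2q=0$ in $\Omega$, $\partial_n q=0$ on $\Gamma$, has only the trivial solution, so $q\equiv 0$ in $\Omega$. Feeding these facts back into the jump relations gives $\gamma^-\mathbf w=\mathbf u$, $\mathbf T^-\mathbf w=-p\,\mathbf n$, $\gamma^+q=p$ and $\partial_n^+q=\eta\,\mathbf u\cdot\mathbf n$. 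Hence $(\mathbf w|_\Omega,\,q|_{\Omega^c})$ solves the homogeneous problem \eqref{Eq:Navier}--\eqref{Eq:RadiationCond} with $p^{inc}=0$: the field equations and \eqref{Eq:RadiationCond} hold by construction, while \eqref{Eq:TransCond1} reads $\eta\,\gamma^-\mathbf w\cdot\mathbf n=\partial_n^+q$ and \eqref{Eq:TransCond2} reads $\mathbf T^-\mathbf w=-\mathbf n\,\gamma^+q$, both just verified. By Lemma \ref{le:2.2}, $q\equiv 0$ in $\Omega^c$, so $p=\gamma^+q=0$; then \eqref{Eq:TransCond2} forces $\mathbf T^-\mathbf w=\mathbf 0$ and \eqref{Eq:TransCond1} forces $\mathbf w\cdot\mathbf n=0$ on $\Gamma$ (recall $\eta=\rho_f\omega^2>0$), so $\mathbf w|_\Omega$ is a traction free solution; hypothesis (a) then forces $\mathbf w\equiv\mathbf 0$ in $\Omega$ and $\mathbf u=\gamma^-\mathbf w=\mathbf 0$. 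This gives injectivity, which together with the Fredholm property yields unique solvability of \eqref{Eq:BIE1}--\eqref{Eq:BIE2}.

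Two points will need the most care. The first is the sign bookkeeping in the jump relations: it is exactly this that makes the two homogeneous boundary equations reduce precisely to the two homogeneous boundary conditions $\mathbf T^+\mathbf w=\mathbf 0$ and $\partial_n^-q=0$, and the whole potential-theoretic reduction depends on it. The second is a regularity matter — Lemma \ref{le:2.2} and Theorem \ref{th:2.1} are stated for classical solutions, whereas $(\mathbf w|_\Omega,q|_{\Omega^c})$ is a priori only a weak solution, so one removes this gap either by bootstrapping the smoothness of the densities (using that $\Gamma$ is smooth) or by noting that the Rellich-type uniqueness argument behind those results applies verbatim to variational solutions. The Fredholm step, by contrast, is routine once the mapping properties and the G\aa rding inequalities for $W_s$ and $W_f$ are in hand, so the core of the proof is the reduction to the homogeneous transmission problem.
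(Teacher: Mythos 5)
Your proposal is correct and follows essentially the same route as the paper: the injectivity argument (two-sided layer potentials with densities $-p\,{\bf n}$ and $\eta\,{\bf u}\cdot{\bf n}$, killing the exterior elastic field by Neumann uniqueness and the interior acoustic field by hypothesis (b), then identifying the remaining pieces with a solution of the homogeneous transmission problem handled by (a)) is exactly the paper's proof of this theorem, with correct sign bookkeeping. The Fredholm-of-index-zero step you make explicit is what the paper supplies separately via the G\aa rding inequality of Theorem \ref{Theorem3.1} before invoking the Fredholm alternative in Theorem \ref{Theorem3.3}.
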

\begin{proof}
The proof follows similarly as the procedure given in \cite{LM95}. It is sufficient to prove that the corresponding homogeneous system has only the trivial solution. Suppose that $({\bf u}_0,p_0)$ is a solution of the corresponding homogeneous system of \eqref{Eq:BIE1}--\eqref{Eq:BIE2}. Now, let
\ben
{\bf u}_e(x)  &=&  -\int_{\Gamma}{\bf E}(x,y)({\bf n}p_0)(y)\,ds_y - \int_{\Gamma}({\bf T}_y{\bf E}(x,y))^\top{\bf u}_0(y)\,ds_y, \quad \forall\,x\in\Omega^c,\\
p_i(x)  &=&  \int_{\Gamma}\frac{\partial \gamma_k }{\partial n_y}(x,y)p_0(y)\,ds_y - \eta\int_{\Gamma} \gamma_k(x,y)({\bf u}_0\cdot {\bf n})(y)\,ds_y, \quad\forall\,x\in \Omega.
\enn
Taking the limit $x\to\Gamma$, and making use of jump relations of the single- and double-layer potentials, we arrive at boundary integral equations:
\be
\label{Eq:Th31BIE1}
({\bf Tu}_e)(x) &=& \left(\frac{1}{2}I - K_s^{'}\right)({\bf n}p_0)(x) + W_s{\bf u}_0(x),\quad x\in\Gamma,\\
\label{Eq:Th31BIE2}
\frac{{\partial p_i}}{\partial n}(x) &=& -W_fp_0(x) - \eta\left(\frac{1}{2}I + K_f^{'}\right)({\bf n}\cdot {\bf u}_0)(x),\quad x\in\Gamma.
\en
Then the homogeneous form of \eqref{Eq:BIE2} and \eqref{Eq:Th31BIE2} implies that $\partial p_i/\partial n=0$ on $\Gamma$. Under the assumption (b) we know that $p_i=0$ in $\Omega$. In particular, $p_i=0$ on $\Gamma$. Then we have
\be
\label{Eq:Th31BIE3}
\left(-\frac{1}{2}I + K_f\right)p_0(x) - \eta V_{f}({\bf n}\cdot {\bf u}_0)(x)=0,\quad x\in\Gamma.
\en
In addition, since ${\bf u}_e$ satisfies the exterior elastic scattering problem in $\Omega^c$ with vanishing Neumann boundary data on $\Gamma$, it follows that ${\bf u}_e=0$ in $\Omega^c$. In particular, ${\bf u}_e=0$ on $\Gamma$. Then we have
\be
\label{Eq:Th31BIE4}
V_{s}({\bf n}p_0)(x) + \left(\frac{1}{2}I + K_{s}\right){\bf u}_0(x)=0,\quad x\in\Gamma.
\en
Now, let
\be
\label{Eq:Th31BIE5}
{\bf u}_i(x)  &=&  -\int_{\Gamma}{\bf E}(x,y)({\bf n}p_0)(y)\,ds_y - \int_{\Gamma}({\bf T}_y{\bf E}(x,y))^\top{\bf u}_0(y)\,ds_y, \quad \forall\,x\in\Omega^c,\\
\label{Eq:Th31BIE6}
p_e(x)  &=&  \int_{\Gamma}\frac{\partial \gamma_k }{\partial n_y}(x,y)p_0(y)\,ds_y - \eta\int_{\Gamma} \gamma_k(x,y)({\bf u}_0\cdot {\bf n})(y)\,ds_y, \quad\forall\,x\in \Omega.
\en
Evaluating the jump on the boundary $\Gamma$, we obtain from \eqref{Eq:Th31BIE3} and \eqref{Eq:Th31BIE6} that
\ben
\frac{{\partial p_e}}{\partial n}-\frac{{\partial p_i}}{\partial n}=\eta u_0\cdot{\bf n}\quad\mbox{and}\quad p_e=p_0 \quad\mbox{on}\quad\Gamma.
\enn
On the other hand, we derive from \eqref{Eq:Th31BIE4} and \eqref{Eq:Th31BIE5} that
\ben
{\bf Tu}_i-{\bf Tu}_e=-{\bf n}p_0\quad\mbox{and}\quad {\bf u}_i={\bf u}_0 \quad\mbox{on}\quad\Gamma.
\enn
Thus, $({\bf u}_i,p_e)$ is a solution of the homogeneous boundary value problem of \eqref{Eq:Navier}-\eqref{Eq:RadiationCond}. This further implies that ${\bf u}_i=0$ in $\Omega$ and $p_e=0$ in $\Omega^c$ under assumption (a) which further leads to $p_0=0$ and ${\bf u}_0=0$ on $\Gamma$. This completes the proof.
\end{proof}

\subsection{Weak formulation}
\label{sec3.2}
Now, we consider the weak formulation for the system of boundary integral equations \eqref{Eq:BIE1}--\eqref{Eq:BIE2}. We assume that
\ben
{\bf u} \in (H^1(\Omega))^2 \quad \text{and}\quad p\in H_{loc}^1(\Omega^c)
\enn
with traces
\ben
{\bf u}|_{\Gamma}\in (H^{1/2}(\Gamma))^2 \quad \text{and} \quad p|_{\Gamma}\in H^{1/2}(\Gamma),
\enn
respectively.  Then the standard weak formulation takes the form:\quad {\it Given $p^{inc}$ and $\partial p^{inc}/\partial n$, find $({\bf u}, p)\in \mathcal{H}(\Gamma)=(H^{1/2}(\Gamma))^2\times H^{1/2}(\Gamma)$ satisfying}
\be
A({\bf u},p; {\bf v},q)  =  F({\bf v},q), \quad\forall \,({\bf v},q)\in  \mathcal{H}(\Gamma),
\label{Eq:WeakForm1}
\en
where the sesquilinear form $A(\cdot\,;\, \cdot): \mathcal{H}(\Gamma)\times \mathcal{H}(\Gamma)\mapsto \R$ is defined by
\be \label{Eq:SesquiForm1}
A({\bf u},p; {\bf v},q): &=&  \langle W_s{\bf u}, {\bf v}\rangle + \left\langle \left(\frac{1}{2}I - K_s^{'}\right)p{\bf n}, {\bf v}\right\rangle \nonumber\\
&+& \eta\left\langle \left(\frac{1}{2}I + K_f^{'}\right)({\bf u}\cdot {\bf n}),q\right\rangle + \langle W_fp,q\rangle,
\en
and the linear functional $F({\bf v},q)$ on $\mathcal{H}(\Gamma)$ is defined  by
\ben
F({\bf v},q) = \langle f_1, {\bf v}\rangle + \langle f_2, q\rangle.
\enn
Here and in the sequel, $\langle\cdot,\cdot\rangle$ is the $L^2$ duality pairing between $H^{-1/2}(\Gamma)$ and $H^{1/2}(\Gamma)$, or $(H^{-1/2}(\Gamma))^2$ and $(H^{1/2}(\Gamma))^2$.  In order to obtain the existence of a solution of the variational equation \eqref{Eq:WeakForm1}, we need the next two theorems.
\begin{theorem} \label{Theorem3.1}
The sesquilinear form \eqref{Eq:SesquiForm1} satisfies a G\aa rding's inequality in the form
\be
\label{Eq:Garding1}
\mbox{Re}\,\{ A({\bf u},p; {\bf u},p)\} &\ge& \alpha\left(\|{\bf u}\|_{(H^{1/2}(\Gamma))^2}^2 + \|p\|_{H^{1/2}(\Gamma)}^2\right) \nonumber\\
&-& \beta\left(\|{\bf u}\|_{(H^{1/2-\epsilon}(\Gamma))^2}^2 + \|p\|_{H^{1/2-\epsilon}(\Gamma)}^2\right),
\en
for all $({\bf u}, p)\in \mathcal{H}(\Gamma)$ where $\alpha>0$ , $\beta\ge 0$ and $0<\epsilon<1/2$ are all constants.
\end{theorem}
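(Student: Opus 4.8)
The plan is to exploit the natural $2\times2$ block structure of the sesquilinear form $A$. Its diagonal blocks are the hypersingular operators $W_s$ and $W_f$, which are of order $+1$ and strongly elliptic, whereas the two off-diagonal blocks are built from the order-$0$ operators $\tfrac12 I-K_s'$ and $\tfrac12 I+K_f'$ composed with the smooth multiplications ${\bf v}\mapsto{\bf v}\cdot{\bf n}$ and $q\mapsto q{\bf n}$; these coupling terms will only contribute lower-order quantities. Accordingly I would establish three facts: (i) a G\aa rding inequality for each diagonal block, i.e.\ $\mathrm{Re}\,\langle W_s{\bf u},{\bf u}\rangle\ge\alpha_1\|{\bf u}\|_{(H^{1/2}(\Gamma))^2}^2-\beta_1\|{\bf u}\|_{(L^2(\Gamma))^2}^2$ and $\mathrm{Re}\,\langle W_f p,p\rangle\ge\alpha_2\|p\|_{H^{1/2}(\Gamma)}^2-\beta_2\|p\|_{L^2(\Gamma)}^2$; (ii) the estimate $|\langle(\tfrac12 I-K_s')p{\bf n},{\bf u}\rangle|+|\eta\langle(\tfrac12 I+K_f')({\bf u}\cdot{\bf n}),p\rangle|\le C(\|{\bf u}\|_{(L^2(\Gamma))^2}^2+\|p\|_{L^2(\Gamma)}^2)$ for the off-diagonal part; and (iii) the final step through the embedding $\|\cdot\|_{L^2(\Gamma)}\le C\|\cdot\|_{H^{1/2-\epsilon}(\Gamma)}$, which holds for $0<\epsilon<1/2$.

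For step (i) the standard device is to split off the static principal parts. In the fluid block, $\gamma_k-\gamma_0$ extends to a smooth kernel on $\Gamma\times\Gamma$ (the imaginary part of $\gamma_k$ is analytic and its real part differs from the Laplace kernel by a smooth function), so $W_f=W_f^{(0)}+R_f$ with $W_f^{(0)}$ the Laplace hypersingular operator and $R_f$ a smoothing operator; one then invokes the classical coercivity $\langle W_f^{(0)}p,p\rangle\ge c\,\|p\|_{H^{1/2}(\Gamma)}^2$ on mean-zero functions together with $W_f^{(0)}1=0$ to obtain the G\aa rding form with an $L^2$ remainder. In the elastic block, the dynamic tensor ${\bf E}(x,y)$ in \eqref{Eq:NavierFS} differs from the elastostatic (Kelvin) tensor by a kernel that is smoother by the required order: the logarithmic singularities of $\gamma_{k_s}$ and $\gamma_{k_p}$ cancel in $\gamma_{k_s}-\gamma_{k_p}$, and $\nabla_x\nabla_x$ of the remaining $O(|x-y|^2\log|x-y|)$ part reproduces precisely the $\log|x-y|\,{\bf I}$ and $(x-y)\otimes(x-y)/|x-y|^2$ structure of the static tensor. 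Hence $W_s=W_s^{(0)}+R_s$ with $R_s$ compact on $(H^{1/2}(\Gamma))^2$, and $W_s^{(0)}$ is $(H^{1/2}(\Gamma))^2$-coercive modulo the finite-dimensional space of rigid motions (\cite{KGBB79,HW08}); this is also consistent with the regularized representation of the elastodynamic hypersingular operator to be derived in Section~\ref{sec:6} and the Appendix, and yields the claimed bound for $W_s$.

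Step (ii) is routine. Since $\Gamma$ is smooth, $K_s'$ is bounded on $(L^2(\Gamma))^2$, $K_f'$ maps $L^2(\Gamma)$ boundedly into $H^1(\Gamma)\subset L^2(\Gamma)$, and multiplication by the components of ${\bf n}$ is bounded on $L^2$; writing the duality pairings as the $L^2$ inner products of these genuinely square-integrable functions and applying the Cauchy--Schwarz inequality gives
\begin{align*}
&\bigl|\langle(\tfrac12 I-K_s')p{\bf n},{\bf u}\rangle\bigr|+\bigl|\eta\langle(\tfrac12 I+K_f')({\bf u}\cdot{\bf n}),p\rangle\bigr|\\
&\qquad\le C\,\|p\|_{L^2(\Gamma)}\,\|{\bf u}\|_{(L^2(\Gamma))^2}\le C\bigl(\|{\bf u}\|_{(L^2(\Gamma))^2}^2+\|p\|_{L^2(\Gamma)}^2\bigr).
\end{align*}
Adding (i) and (ii), setting $\alpha=\min(\alpha_1,\alpha_2)>0$, and collecting all the $L^2$ remainders into one constant $\beta\ge0$ times $(\|{\bf u}\|_{(H^{1/2-\epsilon}(\Gamma))^2}^2+\|p\|_{H^{1/2-\epsilon}(\Gamma)}^2)$ by (iii) then gives \eqref{Eq:Garding1}.

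I expect the only substantial point to be item (i) for the elastic block: that $W_s$ equals the elastostatic hypersingular operator up to a compact perturbation on $(H^{1/2}(\Gamma))^2$, and that the latter is coercive there modulo rigid motions. This rests on the fine structure of ${\bf E}(x,y)$ in \eqref{Eq:NavierFS} --- in particular the cancellation of the $\log$-singularities in $\gamma_{k_s}-\gamma_{k_p}$ --- together with a workable (e.g.\ G\"unter-derivative) representation of the static hypersingular operator. Everything else is bookkeeping with the mapping properties listed above and Young's inequality.
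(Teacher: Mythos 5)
Your proof is correct and follows essentially the same route as the paper: G\aa rding inequalities for the two diagonal hypersingular blocks $W_s$ and $W_f$ (which the paper simply cites from \cite{HW08} and \cite{HKR00}, whereas you sketch their derivation via compact perturbation of the static operators), combined with $L^2$ Cauchy--Schwarz bounds on the order-zero off-diagonal couplings and the embedding $H^{1/2-\epsilon}(\Gamma)\hookrightarrow L^2(\Gamma)$. One minor imprecision: $\gamma_k-\gamma_0$ does not extend to a smooth kernel (it retains $|x-y|^2\log|x-y|$ terms), but it is smoother by two orders, which is all the perturbation argument requires.
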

\begin{proof}
In \eqref{Eq:SesquiForm1}, we set the test functions $({\bf v},q)$ to be $({\bf u},p)$ and obtain
\ben
&\quad&A({\bf u},p; {\bf u},p) \\
&=& \langle W_s{\bf u}, {\bf u}\rangle + \left\langle \left(\frac{1}{2}I - K_s^{'}\right)p{\bf n}, {\bf u}\right\rangle + \eta\left\langle \left(\frac{1}{2}I + K_f^{'}\right)({\bf u}\cdot {\bf n}),p\right\rangle + \langle W_fp,p\rangle.
\enn
We notice first that $W_f$ satisfies a G\aa rding's inequality in the form (\cite{HW08})
\be
\mbox{Re}\,\{\langle W_fp,p\rangle\} \ge \alpha_1\|p\|_{H^{1/2}(\Gamma)}^2 - \beta_1\|p\|_{H^{1/2-\epsilon}(\Gamma)}^2 \label{Eq:Theorem3.1-Eq1}
\en
for some constants $\alpha_1 > 0$,  $\beta_1\ge 0$ and $0<\epsilon<1/2$.  From the estimates in {\cite{HKR00}} we know that
 \be  \label{Eq:Theorem3.1-Eq2}
\mbox{Re}\,\{\langle W_s{\bf u}, {\bf u}\rangle\} \ge \alpha_2\|{\bf u}\|_{(H^{1/2}(\Gamma))^2}^2 - \beta_2\|{\bf u}\|_{(H^{1/2-\epsilon}(\Gamma))^2}^2
\en
for some constants $\alpha_2 > 0$, $\beta_2\ge 0$ and $0<\epsilon<1/2$.
Furthermore, we have
\ben
\left|\left\langle \left(\frac{1}{2}I + K_f^{'}\right)({\bf u}\cdot {\bf n}),p\right\rangle\right|&\le&  \left\|\left(\frac{1}{2}I + K_f^{'}\right)({\bf u}\cdot {\bf n})\right\|_{H^{0}(\Gamma)}\|p\|_{H^{0}(\Gamma)}\nonumber\\
&\le& c \|{\bf u}\|_{(H^{0}(\Gamma))^2}\|p\|_{H^{0}(\Gamma)}\nonumber\\
&\le& c \left(\|{\bf u}\|_{(H^{1/2-\epsilon}(\Gamma))^2}^2 + \|p\|_{H^{1/2-\epsilon}(\Gamma)}^2\right)
\enn
which follows immediately by
\be \label{Eq:Theorem3.1-Eq3}
\mbox{Re}\,\left\{\left\langle \left(\frac{1}{2}I + K_f^{'}\right)({\bf u}\cdot {\bf n}),p\right\rangle\right\} \ge -\beta_3\left(\|{\bf u}\|_{(H^{1/2-\epsilon}(\Gamma))^2}^2 + \|p\|_{H^{1/2-\epsilon}(\Gamma)}^2\right),
\en
where $\beta_3\ge 0$ and $0<\epsilon<1/2$ are all constants.  Due to the same argument, we also have
\be \label{Eq:Theorem3.1-Eq4}
\mbox{Re}\,\left\{\left\langle \left(\frac{1}{2}I - K_s^{'}\right)(p{\bf n}),{\bf u}\right\rangle\right\} \ge -\beta_4\left(\|{\bf u}\|_{(H^{1/2-\epsilon}(\Gamma))^2}^2 + \|p\|_{H^{1/2-\epsilon}(\Gamma)}^2\right)
\en
for some constants $\beta_4\ge 0$ and $0<\epsilon<1/2$ . Therefore, the combination of inequalities \eqref{Eq:Theorem3.1-Eq1}--\eqref{Eq:Theorem3.1-Eq4} gives the G\aa rding's inequality \eqref{Eq:Garding1} immediately. This completes the proof.
\end{proof}

Now, the existence result follows immediately from the Fredholm's Alternative: uniqueness implies existence. Therefore, we have the following theorem.

\begin{theorem} \label{Theorem3.3}
{Under the assumptions {\em (a)} and {\em (b)} in Theorem \ref{Theorem3.11}, the variational equation \eqref{Eq:WeakForm1} admits a unique solution $({\bf u},p)\in  \mathcal{H}(\Gamma)$}.
\end{theorem}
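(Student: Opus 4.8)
The plan is to deduce Theorem \ref{Theorem3.3} from the G\aa rding inequality of Theorem \ref{Theorem3.1} and the unique solvability of Theorem \ref{Theorem3.11} by the Fredholm alternative, in the familiar form ``uniqueness implies existence''.

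First I would record that the sesquilinear form $A(\cdot\,;\,\cdot)$ of \eqref{Eq:SesquiForm1} is bounded on $\mathcal{H}(\Gamma)\times\mathcal{H}(\Gamma)$: this follows from the continuity of the operators $W_s$, $K_s^{'}$, $K_f^{'}$, $W_f$ between the Sobolev spaces listed after \eqref{Eq:BIE2} (taking $s=-1/2$), together with the continuity of the trace and normal-component maps and of the duality pairing $\langle\cdot,\cdot\rangle$. Let $\mathcal{A}:\mathcal{H}(\Gamma)\to\mathcal{H}(\Gamma)'$ be the bounded linear operator determined by $\langle\mathcal{A}({\bf u},p),({\bf v},q)\rangle=A({\bf u},p;{\bf v},q)$, so that \eqref{Eq:WeakForm1} reads $\mathcal{A}({\bf u},p)=F$. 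By Theorem \ref{Theorem3.1}, $A$ satisfies a G\aa rding inequality, hence $\mathcal{A}=\mathcal{A}_0+\mathcal{C}$, where $\mathcal{A}_0$ --- obtained by adding to $A$ a suitable multiple of the $(H^{1/2-\epsilon}(\Gamma))^2\times H^{1/2-\epsilon}(\Gamma)$ inner product --- is coercive on $\mathcal{H}(\Gamma)$ and therefore boundedly invertible by the Lax--Milgram lemma, while $\mathcal{C}$ factors through the embedding $\mathcal{H}(\Gamma)\hookrightarrow(H^{1/2-\epsilon}(\Gamma))^2\times H^{1/2-\epsilon}(\Gamma)$, which is compact since $\Gamma$ is compact. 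Thus $\mathcal{A}$ is a Fredholm operator of index zero, and it suffices to show that the homogeneous variational equation, $A({\bf u},p;{\bf v},q)=0$ for all $({\bf v},q)\in\mathcal{H}(\Gamma)$, has only the trivial solution.

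For the latter I would identify a solution of the homogeneous variational problem with a solution of the homogeneous form of the boundary integral system \eqref{Eq:BIE1}--\eqref{Eq:BIE2}: since $\langle\cdot,\cdot\rangle$ is nondegenerate and $\mathcal{H}(\Gamma)$ is exactly the energy space, $A({\bf u},p;{\bf v},q)=0$ for all $({\bf v},q)$ forces $W_s{\bf u}+(\tfrac12 I-K_s^{'})p{\bf n}={\bf 0}$ in $(H^{-1/2}(\Gamma))^2$ and $\eta(\tfrac12 I+K_f^{'})({\bf u}\cdot{\bf n})+W_f p=0$ in $H^{-1/2}(\Gamma)$. By Theorem \ref{Theorem3.11}, under the hypotheses (a) and (b) this homogeneous system admits only $({\bf u},p)=({\bf 0},0)$. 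The Fredholm alternative then gives a unique $({\bf u},p)\in\mathcal{H}(\Gamma)$ solving \eqref{Eq:WeakForm1} for every right-hand side, and in particular for $F({\bf v},q)=\langle f_1,{\bf v}\rangle+\langle f_2,q\rangle$ (which is a bounded functional on $\mathcal{H}(\Gamma)$ because $p^{inc}$ is smooth).

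Most of the analytical work is already carried out in Theorems \ref{Theorem3.1} and \ref{Theorem3.11}; the point I would treat most carefully is the equivalence between the variational identity \eqref{Eq:WeakForm1} tested over all of $\mathcal{H}(\Gamma)$ and the operator equations \eqref{Eq:BIE1}--\eqref{Eq:BIE2} holding in the dual spaces, since this is what makes Theorem \ref{Theorem3.11} (proved at the level of the integral equations and the classical potentials) applicable here. This equivalence rests on the mapping properties quoted above, which ensure that each term of $A$ is a well-defined continuous functional on $\mathcal{H}(\Gamma)$, and on the density of smooth functions in $H^{1/2}(\Gamma)$; granting it, the remaining assembly --- boundedness of $A$, the coercive-plus-compact splitting, and the invocation of the Fredholm alternative --- is routine.
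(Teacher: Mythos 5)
Your proposal is correct and follows exactly the route the paper takes: the paper's entire argument for Theorem \ref{Theorem3.3} is the one-line remark that ``the existence result follows immediately from the Fredholm's Alternative: uniqueness implies existence,'' combining the G\aa rding inequality of Theorem \ref{Theorem3.1} with the uniqueness of Theorem \ref{Theorem3.11}. Your write-up simply supplies the standard details (boundedness of $A$, the coercive-plus-compact splitting via the compact embedding into $(H^{1/2-\epsilon}(\Gamma))^2\times H^{1/2-\epsilon}(\Gamma)$, and the identification of the homogeneous variational problem with the homogeneous integral system) that the paper leaves implicit.
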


\section{Indirect method}
\label{sec:4}
We now employ the indirect method based on the potential layers to derive a  system of coupled boundary integral equations for the solution of the problem \eqref{Eq:Navier}--\eqref{Eq:RadiationCond}. Uniqueness and existence results are established for the weak solution in appropriate Sobolev spaces.
\subsection{Boundary integral equations}
\label{sec:4.1}
In terms of the representation formulas \eqref{Eq:DirectBRF1}--\eqref{Eq:DirectBRF2} and potential theory, we may seek the solution of problem \eqref{Eq:Navier}--\eqref{Eq:RadiationCond} in the form of combined single- and double-layer potentials
\be
\label{Eq:IndirectBRF1}
{\bf u}(x) &=& S_s(-{\bf n}\psi )(x) - D_s({\bf v}), \quad\forall\,  x\in\Omega,\\
\label{Eq:IndirectBRF2}
p(x) &=& D_f(\psi) - \eta S_f({\bf v}\cdot{\bf n} ), \quad\forall\, x\in\Omega^c,
\en
where  ${\bf v}$ and $\psi$ are two unknown continuous density functions defined on the space $(H^{1/2}(\Gamma))^2$ and $H^{1/2}(\Gamma)$, respectively. $S_s$ and $D_s$ are the standard single- and double-layer potentials defined on the solid region $\Omega$. Similarly, $S_f$ and $D_f$ are single- and double-layer potentials on the fluid domain $\Omega^c$.  Taking the limit $x\to \Gamma$  directly for \eqref{Eq:IndirectBRF1}, and operating with the traction operator on \eqref{Eq:IndirectBRF1} and then taking the limit $x\to \Gamma$,  we arrive at, by the jump conditions,
\be
\label{Eq:IDBIE1}
{\bf u}(x) &=& V_s(-{\bf n}\psi)(x) + \left(\frac{1}{2}I - K_s\right){\bf v}(x),\quad\forall\,x\in\Gamma,\\
\label{Eq:IDBIE2}
{\bf t}(x) &=& \left(\frac{1}{2}I + K_s^{'}\right)(-{\bf n}\psi)(x) + W_s{\bf v}(x),\quad\forall\,x\in\Gamma.
\en
Taking  the limit $x\to \Gamma$  directly for \eqref{Eq:IndirectBRF2}, and computing the normal derivative of \eqref{Eq:IndirectBRF2} and taking the limit $x\to \Gamma$,  we obtain, by the jump conditions,
\be
\label{Eq:IDBIE3-1}
p(x) &=& \left(\frac{1}{2}I + K_f\right)\psi(x) - \eta V_f({\bf n}\cdot {\bf v})(x),\quad \forall\,x\in\Gamma,\\
\label{Eq:IDBIE4-1}
\frac{\partial p}{\partial n}(x) &=& -W_f\psi(x) + \eta\left(\frac{1}{2}I - K_f^{'}\right)({\bf n}\cdot {\bf v})(x),\quad\forall\,x\in\Gamma.
\en
In the boundary integral equations \eqref{Eq:IDBIE1}--\eqref{Eq:IDBIE4-1}, boundary integral operators are the same as defined in Section \ref{sec:3}. Now, we start with the boundary integral equations \eqref{Eq:IDBIE1}  and \eqref{Eq:IDBIE2} consisting of the hyper-singular boundary integral operators and utilize the transmission conditions \eqref{Eq:TransCond1} and \eqref{Eq:TransCond2} to derive a system of coupled boundary integral equations with unknown vector  $({\bf v},\psi)$.  Taking the dot-product with ${\bf n}$ for both sides of \eqref{Eq:IDBIE1} and using \eqref{Eq:TransCond1} and \eqref{Eq:IDBIE4-1}, we have
\be
\label{Eq:IDBIE3}
&\quad&{\bf n}\cdot  V_s(-{\bf n}\psi)  + {\bf n}\cdot\left(\frac{1}{2}I - K_s\right){\bf v}\nonumber\\
&=& \frac{1}{\eta}\left\{ -W_f\psi  + \eta\left(\frac{1}{2}I - K_f^{'}\right)({\bf n}\cdot {\bf v})\right\} + \frac{1}{\eta}\frac{\partial p^{inc}}{\partial n}, \quad x\in\Gamma.
\en
Similarly, beginning with \eqref{Eq:IDBIE2} and using \eqref{Eq:TransCond2} and \eqref{Eq:IDBIE3-1},  we are led to
\be
\label{Eq:IDBIE4}
&\quad& \left(\frac{1}{2}I + K_s^{'}\right)(-{\bf n}\psi)  + W_s{\bf v} \nonumber\\
&=&-{\bf n}p^{inc} - \left\{{\bf n}\left(\frac{1}{2}I + K_f\right)\psi  - \eta{\bf n}V_f({\bf n}\cdot {\bf v})\right\}, \quad x\in\Gamma.
\en
By the combination of equations \eqref{Eq:IDBIE3} and \eqref{Eq:IDBIE4}, we immediately obtain a system of coupled boundary integral equations on $\Gamma$ as below
\be
\label{Eq:IDBIE5}
 W_s{\bf v} +  {\bf n} K_f\psi  - K_s^{'} ({\bf n}\psi) - \eta{\bf n}V_f({\bf n}\cdot {\bf v}) &=& -{\bf n}p^{inc}:=g_1,\quad x\in\Gamma,\\
\label{Eq:IDBIE6}
 \frac{1}{\eta} W_f\psi +  K_f^{'}({\bf n}\cdot {\bf v}) -{\bf n}\cdot K_s{\bf v}  - {\bf n}\cdot  V_s({\bf n}\psi)  &=&\frac{1}{\eta}\frac{\partial p^{inc}}{\partial n}:=g_2,\quad x\in\Gamma.
\en
It can be seen that  \eqref{Eq:IndirectBRF1}  and \eqref{Eq:IndirectBRF2} define a solution of the fluid-solid interaction problem \eqref{Eq:Navier}--\eqref{Eq:RadiationCond} if  $({\bf v},\psi)$  solves the system of boundary integral equations \eqref{Eq:IDBIE5}--\eqref{Eq:IDBIE6}.  Therefore, the unique  solvability of \eqref{Eq:Navier}--\eqref{Eq:RadiationCond} can be proved by showing that \eqref{Eq:IDBIE5}--\eqref{Eq:IDBIE6} is uniquely solvable. {First, we need the following result.}
\begin{theorem}
\label{Theorem4.1}
If the surface $\Gamma$ and the material parameter $(\mu,\lambda,\rho)$ are such that there are no traction free solutions, the system of boundary integral equations \eqref{Eq:IDBIE5}--\eqref{Eq:IDBIE6} is uniquely solvable.
\end{theorem}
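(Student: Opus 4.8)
The plan is to follow the scheme of Theorem~\ref{Theorem3.11}: reduce the claim, via a G\aa rding inequality and the Fredholm alternative, to showing that the homogeneous system has only the trivial solution, and then derive triviality from Theorem~\ref{th:2.1} combined with a Rellich-type argument. For the G\aa rding inequality, observe that the diagonal of the operator matrix in \eqref{Eq:IDBIE5}--\eqref{Eq:IDBIE6} is $W_s$ and $\tfrac{1}{\eta}W_f$, which satisfy G\aa rding inequalities on $(H^{1/2}(\G))^2$ and $H^{1/2}(\G)$ (exactly as in the proof of Theorem~\ref{Theorem3.1}), while the remaining entries are compositions of $V_s,V_f,K_s,K_s',K_f,K_f'$ with multiplication by ${\bf n}$, all of lower order and hence compact perturbations in the G\aa rding sense; so the sesquilinear form associated with \eqref{Eq:IDBIE5}--\eqref{Eq:IDBIE6} on $\mathcal{H}(\G)$ satisfies a G\aa rding inequality, and it suffices to treat the homogeneous system ($g_1=g_2=0$). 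Let then $({\bf v}_0,\psi_0)\in\mathcal{H}(\G)$ solve the homogeneous form of \eqref{Eq:IDBIE5}--\eqref{Eq:IDBIE6}, and let $({\bf u},p)$ be defined by the representations \eqref{Eq:IndirectBRF1}--\eqref{Eq:IndirectBRF2} with these densities. By construction ${\bf u}$ solves \eqref{Eq:Navier} in $\Om$, $p$ solves the Helmholtz equation in $\Om^c$ together with \eqref{Eq:RadiationCond}, and, this being precisely the derivation of Section~\ref{sec:4.1} read in reverse via \eqref{Eq:IDBIE1}--\eqref{Eq:IDBIE4-1}, the homogeneous transmission conditions \eqref{Eq:TransCond1}--\eqref{Eq:TransCond2} (with $p^{inc}=0$) hold. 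Since $k>0$ implies $\mathrm{Im}\,k=0$, Theorem~\ref{th:2.1} then forces ${\bf u}\equiv{\bf 0}$ in $\Om$ and $p\equiv 0$ in $\Om^c$.

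It remains to recover the densities. The formula \eqref{Eq:IndirectBRF1} also defines a radiating solution ${\bf u}^c$ of the Navier equation in $\Om^c$, and \eqref{Eq:IndirectBRF2} defines a solution $p^i$ of the Helmholtz equation in the bounded domain $\Om$. Using the jump relations of the single- and double-layer potentials together with the already established ${\bf u}|_\G={\bf 0}$, ${\bf T}{\bf u}|_\G={\bf 0}$, $p|_\G=0$ and $\partial p/\partial n|_\G=0$, one computes the Cauchy data on $\G$ of these complementary fields: ${\bf u}^c|_\G=-{\bf v}_0$, ${\bf T}{\bf u}^c|_\G={\bf n}\psi_0$, $p^i|_\G=-\psi_0$ and $\partial p^i/\partial n|_\G=-\eta\,{\bf n}\cdot{\bf v}_0$. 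In particular ${\bf T}{\bf u}^c=-{\bf n}\,p^i$ and $\eta\,{\bf u}^c\cdot{\bf n}=\partial p^i/\partial n$ on $\G$, that is, $({\bf u}^c,p^i)$ solves an FSI-type transmission problem with the two media interchanged, which need not be uniquely solvable on its own, so one more ingredient is required.

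That ingredient is a pair of energy identities. Green's first identity for $p^i$ on the bounded domain $\Om$ (with real $k$) gives $\mathrm{Im}\int_\G(\partial p^i/\partial n)\,\overline{p^i}\,ds=0$. Green's first identity (Betti's formula) for the radiating field ${\bf u}^c$ on $\Om^c\cap B_R$, letting $R\to\infty$ and using the elastic (Kupradze) radiation conditions, gives $\mathrm{Im}\int_\G{\bf T}{\bf u}^c\cdot\overline{{\bf u}^c}\,ds\ge 0$, equal to a positive combination of the squared $L^2$-norms of the compressional and shear far-field patterns of ${\bf u}^c$. Substituting ${\bf T}{\bf u}^c=-{\bf n}\,p^i$ and ${\bf u}^c\cdot{\bf n}=\tfrac{1}{\eta}\partial p^i/\partial n$ into the second integral rewrites it as $-\tfrac{1}{\eta}\int_\G p^i\,\overline{\partial p^i/\partial n}\,ds$, whose imaginary part vanishes by the first identity. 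Hence both far-field patterns of ${\bf u}^c$ are zero; by Rellich's lemma applied to the compressional and shear Helmholtz components of ${\bf u}^c$, and analytic continuation in the connected set $\Om^c$, ${\bf u}^c\equiv{\bf 0}$ in $\Om^c$. Reading off its Cauchy data, ${\bf v}_0=-{\bf u}^c|_\G={\bf 0}$ and ${\bf n}\psi_0={\bf T}{\bf u}^c|_\G={\bf 0}$, so $\psi_0=0$. This proves triviality of the homogeneous solution, and with the G\aa rding inequality the Fredholm alternative yields unique solvability.

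The step I expect to be the main obstacle is the recovery of the densities just described. In the direct method of Theorem~\ref{Theorem3.11} the complementary potentials recombine into a bona fide solution of the original FSI problem, so Theorem~\ref{th:2.1} can simply be invoked a second time; here the complementary fields solve the interchanged transmission problem, which is not unconditionally well posed, and the coupling of the two Green identities above is what replaces the missing uniqueness statement. Setting up this coupling correctly, in particular keeping the sign conventions straight in the jump relations and in Betti's formula on $\Om^c$, where ${\bf n}$ is the normal exterior to $\Om$ and hence interior to $\Om^c$, is the delicate point, together with the (routine but necessary) verification that the homogeneous system \eqref{Eq:IDBIE5}--\eqref{Eq:IDBIE6} is genuinely equivalent to the homogeneous transmission conditions.
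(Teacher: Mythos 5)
Your proof is correct, and it assembles exactly the two ingredients the paper uses --- the uniqueness of the original transmission problem under assumption (a) (Theorem \ref{th:2.1}), and an energy/Rellich/Holmgren argument for the ``interchanged'' transmission problem satisfied by the complementary potentials --- but in the opposite order. The paper does not first kill the original fields; it evaluates the potentials \eqref{Eq:IndirectBRF1}--\eqref{Eq:IndirectBRF2} on the \emph{wrong} sides from the outset, shows via the jump relations that the homogeneous system \eqref{Eq:IDBIE5}--\eqref{Eq:IDBIE6} is literally the statement ${\bf t}+p{\bf n}={\bf 0}$, ${\bf u}\cdot{\bf n}-\tfrac{1}{\eta}\pa p/\pa n=0$ for that complementary pair, and then runs your identity \eqref{Eq:4.1-relation} (interior Helmholtz energy balance combined with the far-field formula for the radiating elastic field) to conclude ${\bf u}\equiv 0$ in $\Om^c$ and $p\equiv 0$ in $\Om$; only at the very last line does it recover the densities as jumps, $\psi_0=p^+-p^-$ and ${\bf v}_0={\bf u}^--{\bf u}^+$, a step which still silently requires the vanishing of the traces of the \emph{original} fields and hence assumption (a). You instead dispose of the original pair first by a direct appeal to Theorem \ref{th:2.1} (after correctly observing that the homogeneous system is equivalent to the homogeneous transmission conditions for the original fields), and then apply the same energy identity to the complementary pair whose Cauchy data you read off from the jump relations --- your sign bookkeeping there (${\bf u}^c|_\G=-{\bf v}_0$, ${\bf T}{\bf u}^c|_\G={\bf n}\psi_0$, $p^i|_\G=-\psi_0$, $\pa p^i/\pa n|_\G=-\eta\,{\bf n}\cdot{\bf v}_0$) checks out against \eqref{Eq:IDBIE1}--\eqref{Eq:IDBIE4-1} and \eqref{Eq:Th41BIE1}--\eqref{Eq:Th41BIE4}. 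What your ordering buys is transparency: the role of assumption (a) is localized and explicit, whereas in the paper it is buried in the unexplained final line. One small imprecision: the elastic double-layer operator $K_s$ on a smooth curve is not smoothing (its kernel has a Cauchy-type singularity), so the off-diagonal terms are not all ``lower order'' operators; what is true, and what the paper's Theorem \ref{Theorem4.2} actually uses, is that the corresponding bilinear forms are bounded with respect to the weaker $H^{1/2-\epsilon}$ norms, which suffices for the G\aa rding inequality.
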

\begin{proof}
It is sufficient to prove that the corresponding homogeneous system
has only the trivial solution. {Suppose $ ({\bf v}_0,\psi_0)$  is a solution  of the corresponding homogeneous system of \eqref{Eq:IDBIE5}--\eqref{Eq:IDBIE6}. Now, let
\be
\label{Eq:FluidBRFInside}
p(x) &=& D_f(\psi_0) - \eta S_f({\bf v_0}\cdot{\bf n} ), \quad x\in\Omega,\\
\label{Eq:SolidBRFOutside}
{\bf u}(x) &=& S_s(-{\bf n}\psi_0 )(x) - D_s({\bf v_0}), \quad  x\in\Omega^c.
\en
Taking the limit $x\to\Gamma$, and making use of jump relations of the single- and double-layer potentials, we arrive at boundary integral equations:
\be
\label{Eq:Th41BIE1}
{\bf u}(x) &=& V_s(-{\bf n}\psi_0)(x) - \left(\frac{1}{2}I + K_s\right){\bf v_0}(x),\quad x\in\Gamma,\\
\label{Eq:Th41BIE2}
{\bf t}(x) &=& \left(-\frac{1}{2}I + K_s^{'}\right)(-{\bf n}\psi_0)(x) + W_s{\bf v_0}(x),\quad x\in\Gamma,\\
\label{Eq:Th41BIE3}
p(x) &=& \left(-\frac{1}{2}I + K_f\right)\psi_0(x) - \eta V_f({\bf n}\cdot {\bf v_0})(x),\quad  x\in\Gamma,\\
\label{Eq:Th41BIE4}
\frac{{\partial p}}{\partial n}(x) &=& -W_f\psi_0(x) - \eta\left(\frac{1}{2}I + K_f^{'}\right)({\bf n}\cdot {\bf v_0})(x),\quad x\in\Gamma.
\en
Combinations of \eqref{Eq:Th41BIE2} and \eqref{Eq:Th41BIE3}, and  \eqref{Eq:Th41BIE1} and \eqref{Eq:Th41BIE4} yield, respectively
\ben
{\bf t} + p{\bf n}& =&  W_s{\bf v_0} +  {\bf n} K_f\psi_0  - K_s^{'} ({\bf n}\psi_0) - \eta{\bf n}V_f({\bf n}\cdot {\bf v_0}) =   {\bf 0}\quad\mbox{on}\quad \Gamma,\\
{\bf u} \cdot{\bf n} - \frac{1}{\eta}\frac{\partial p }{\partial n} &= & \frac{1}{\eta} W_f\psi_0 +  K_f^{'}({\bf n}\cdot {\bf v_0}) -{\bf n}\cdot K_s{\bf v_0}  - {\bf n}\cdot  V_s({\bf n}\psi_0) = 0\quad\mbox{on}\quad \Gamma,
\enn
since $({\bf v_0}, \psi_0) $  is a solution of the corresponding homogeneous system of \eqref{Eq:IDBIE5}--\eqref{Eq:IDBIE6}.
 }
Hence ${\bf u}$ and $p$ solve the following homogeneous fluid-solid interaction problem {consisting of}
\be
\label{Eq:Helmholtz4.1}
\Delta p + k^2p &=& 0 \quad\quad\quad\;\;\text{in}\quad \Omega,\\
\label{Eq:Navier4.1}
\Delta^*{\bf u} + \rho\omega^2{\bf u} &=& {\bf 0}\quad\quad\quad\;\;\text{in}\quad\Omega^c,\\
\label{Eq:TransCond4.1-1}
{\bf t} &=& -p{\bf n}\quad\quad\;\text{on}\quad\Gamma,\\
\label{Eq:TransCond4.1-2}
{\bf u}\cdot{\bf n} &=& \frac{1}{\eta}\frac{\partial p}{\partial n}\quad\quad\text{on}\quad\Gamma
\en
{together with elastic radiation conditions (\cite{HW89,LM95}) for {\bf u} }  given in terms of the pressure wave ${\bf u}_p$ and the shear wave ${\bf u}_s$ associated with the wave numbers $k_p$ and $k_s$, respectively. It follows from the Green's formulation and the transmission conditions \eqref{Eq:TransCond4.1-1}--\eqref{Eq:TransCond4.1-2}that
\be
\label{Eq:4.1-relation}
\int_{\Gamma_a}{\bf u}\cdot{\bf T}\ov{\bf u}\,ds &=& \int_\Gamma{\bf u}\cdot{\bf T}\ov{\bf u}\,ds+a({\bf u},{\bf u})\no\\
&=& -\int_\Gamma{\bf u}\cdot{\bf n}\ov{p}\,ds+a({\bf u},{\bf u})\no\\
&=& -\frac{1}{\eta}\int_\Gamma\frac{\pa p}{\pa n}\ov{p}\,ds+a({\bf u},{\bf u})\no\\
&=& -\frac{1}{\eta}b(p,p)+a({\bf u},{\bf u}),
\en
where
\be
\label{Eq:sesquilineara}
a({\bf u},{\bf u}) &=& \int_{\Omega_a}{ \left[\lambda|\nabla\cdot{\bf u}|^2+2\mu\,\mathcal{E}({\bf u}):\ov{\mathcal{E}({\bf u})}- \rho\om^{2}|{\bf u}|^2\right] dx},\\
b(p,p) &=& \int_{\Omega}\left(|\nabla p|^2-k^2|p|^2\right)dx,
\en
and
\ben
\mathcal{E}({\bf u})=\frac{1}{2}\left(\nabla{\bf u}+(\nabla{\bf u})^\top\right).
\enn
Here, $\Gamma_a$ is the circle of radius $a$ and center zero enclosing $\Omega$, $\Omega_a$ is the region between $\Gamma$ and $\Gamma_a$. Since $\mbox{Im}\,a({\bf u},{\bf u})=0$ and $\mbox{Im}\,b(p,p)=0$, taking the imaginary part of \eqref{Eq:4.1-relation} gives
\ben
\mbox{Im}\left(\int_{\Gamma_a}{\bf u}\cdot{\bf T}\ov{\bf u}\,ds\right) =0.
\enn
From the radiation condition for ${\bf u}$ we know
\ben
\mbox{Im}\left(\int_{\Gamma_a}{\bf u}\cdot{\bf T}\ov{\bf u}\,ds\right) \rightarrow-\omega\int_{|x|=1}\,|{\bf u}^\infty|^2\,ds\quad\mbox{as}\quad a\rightarrow\infty
\enn
where ${\bf u}^\infty=({\bf u}_p^\infty,{\bf u}_s^\infty)$ is the far-field pattern of the scattered wave ${\bf u}$. Then we conclude that ${\bf u}^\infty=0$ which implies that ${\bf u}=0$ in $\Omega^c$ by Rellich's lemma and unique continuation. In particular, ${\bf u}=0$ and ${\bf Tu}=0$ on $\Gamma$. Hence $p=0$ and $\pa p/\pa n=0$ on $\Gamma$. Then Holmgren's uniqueness theorem implies that $p=0$ in $\Omega$. Consequently, we see that
\ben
\psi_0 = p^{+} - p^{-} = 0, \quad {\bf v}_0 =  {\bf u}^{-} - {\bf u}^{+}=0
\enn
as expected, where we have denoted by $f^- $ and $f^-$ the limits of the function approach to $\Gamma$ from $\Omega$ and $\Omega^c$, respectively.  This completes the proof.
\end{proof}

{Next, we show that the solution of the system \eqref{Eq:IDBIE5}--\eqref{Eq:IDBIE6} exists by considering  its weak formulation. }

\subsection{Weak formulation}
\label{sec:4.2}
{The weak formulation of the system \eqref{Eq:IDBIE5}--\eqref{Eq:IDBIE6}  reads:} {\it Given $p^{inc}$ and $\partial p^{inc}/\partial n$, find $({\bf v},\psi) \in  \mathcal{H}(\Gamma)$ satisfying}
\be
\label{Eq:WeakForm2}
B({\bf v},\psi;{\bf w},\varphi) = G({\bf w},\varphi),\quad \forall\,({\bf w},\phi)\in \mathcal{H}(\Gamma).
\en
The sesquilinear form $B(\cdot\,;\,\cdot): \mathcal{H}(\Gamma)\times \mathcal{H}(\Gamma)\mapsto \R$  is given by
\be
\label{Eq:SesquilinearForm2}
B({\bf v},\psi;{\bf w},\varphi)  & =& \frac{1}{\eta}\langle W_f\psi,\varphi\rangle -\langle{\bf n}\cdot V_s({\bf n}\psi),\varphi\rangle + \langle K_f^{'}({\bf n}\cdot {\bf v}),\psi\rangle - \langle{\bf n}\cdot K_s{\bf v},\varphi\rangle \nonumber\\
& + &   \langle W_s{\bf v}, {\bf w}\rangle - \eta\langle {\bf n}V_f({\bf n}\cdot {\bf v}), {\bf w}\rangle + \langle{\bf n}K_f\psi,{\bf w}\rangle - \langle K_s^{'}({\bf n}\psi),{\bf w}\rangle
\en
and the linear functional $G({\bf w},\varphi)$ on $\mathcal{H}(\Gamma)$ is defined by
\ben
G({\bf w},\varphi) = \langle g_1,{\bf w}\rangle + \left\langle g_2, \varphi\right\rangle.
\enn
In order to show the existence of a weak solution of the variational equation \eqref{Eq:WeakForm2}, we need the next two theorems.
\begin{theorem}
\label{Theorem4.2}
The sesquilinear form \eqref{Eq:SesquilinearForm2} satisfies a G\aa rding's inequality in the form
\be
\label{Eq:Garding2}
\mbox{Re}\,\{ B({\bf v},\psi; {\bf v},\psi)\} &\ge& \alpha\left(\|{\bf v}\|_{(H^{1/2}(\Gamma))^2}^2 + \|\psi\|_{H^{1/2}(\Gamma)}^2\right) \nonumber\\
&-& \beta\left(\|{\bf v}\|_{(H^{1/2-\epsilon}(\Gamma))^2}^2 + \|\psi\|_{H^{1/2-\epsilon}(\Gamma)}^2\right),
\en
for all $({\bf v}, \psi)\in \mathcal{H}(\Gamma)$ where $\alpha>0$, $\beta\ge 0$ and $0<\epsilon<1/2$ are all constants.
\end{theorem}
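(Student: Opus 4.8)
The plan is to follow verbatim the strategy used for Theorem \ref{Theorem3.1}: isolate the two hypersingular terms $\tfrac{1}{\eta}\langle W_f\psi,\psi\rangle$ and $\langle W_s{\bf v},{\bf v}\rangle$ as the dominant part that supplies the positive $H^{1/2}$ contribution, and absorb everything else into a lower-order remainder. First I would set $({\bf w},\varphi)=({\bf v},\psi)$ in \eqref{Eq:SesquilinearForm2} and write
\[
B({\bf v},\psi;{\bf v},\psi) = \frac{1}{\eta}\langle W_f\psi,\psi\rangle + \langle W_s{\bf v},{\bf v}\rangle + R({\bf v},\psi),
\]
where $R$ collects the six mixed terms $-\langle{\bf n}\cdot V_s({\bf n}\psi),\psi\rangle$, $\langle K_f'({\bf n}\cdot{\bf v}),\psi\rangle$, $-\langle{\bf n}\cdot K_s{\bf v},\psi\rangle$, $-\eta\langle{\bf n}V_f({\bf n}\cdot{\bf v}),{\bf v}\rangle$, $\langle{\bf n}K_f\psi,{\bf v}\rangle$ and $-\langle K_s'({\bf n}\psi),{\bf v}\rangle$.

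For the two main terms I invoke the G\aa rding inequalities already quoted in the proof of Theorem \ref{Theorem3.1}, namely \eqref{Eq:Theorem3.1-Eq1} for $W_f$ and \eqref{Eq:Theorem3.1-Eq2} for $W_s$. Since $\eta=\rho_f\omega^2>0$, the factor $1/\eta$ merely rescales the constant, and adding the two estimates gives
\[
\mbox{Re}\Big\{\tfrac{1}{\eta}\langle W_f\psi,\psi\rangle + \langle W_s{\bf v},{\bf v}\rangle\Big\} \ge \alpha_0\big(\|{\bf v}\|_{(H^{1/2}(\Gamma))^2}^2+\|\psi\|_{H^{1/2}(\Gamma)}^2\big) - \beta_0\big(\|{\bf v}\|_{(H^{1/2-\epsilon}(\Gamma))^2}^2+\|\psi\|_{H^{1/2-\epsilon}(\Gamma)}^2\big),
\]
with $\alpha_0=\min\{\alpha_1/\eta,\alpha_2\}>0$ and $\beta_0=\beta_1/\eta+\beta_2\ge 0$.

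Next I would show $|R({\bf v},\psi)|\le C\big(\|{\bf v}\|_{(H^{1/2-\epsilon}(\Gamma))^2}^2+\|\psi\|_{H^{1/2-\epsilon}(\Gamma)}^2\big)$. Each operator occurring in $R$ has order at most zero: by the mapping properties recalled before Theorem \ref{Theorem3.11} (and their duals), $V_s$, $V_f$, $K_f$ and $K_f'$ gain one order of smoothness, while $K_s$ and $K_s'$ are bounded on $(H^0(\Gamma))^2$; multiplication by the (smooth) normal ${\bf n}$ and the contraction with ${\bf n}$ are bounded between the scalar and vector $H^0$ spaces. Hence every mixed term is bounded by the product of two $H^0$-norms, e.g. $|\langle{\bf n}\cdot K_s{\bf v},\psi\rangle|\le \|K_s{\bf v}\|_{(H^0(\Gamma))^2}\|\psi\|_{H^0(\Gamma)}\le c\|{\bf v}\|_{(H^0(\Gamma))^2}\|\psi\|_{H^0(\Gamma)}$, and analogously for the other five; Young's inequality together with the continuous embedding $H^{1/2-\epsilon}(\Gamma)\hookrightarrow H^0(\Gamma)$ (valid since $0<\epsilon<1/2$) then gives the claimed bound. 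Combining this with the previous display yields \eqref{Eq:Garding2} with $\alpha=\alpha_0$ and $\beta=\beta_0+C$.

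The argument is essentially routine once the decomposition is set up; the only point that needs care is the bookkeeping of the mapping properties of the coupled solid--fluid terms, in particular verifying that the order-zero double-layer operators $K_s$ and $K_s'$ (which do not gain regularity) are nonetheless bounded on $(H^0(\Gamma))^2$, so that their contribution still lands in the lower-order norm $H^{1/2-\epsilon}$ rather than competing with the dominant hypersingular part. This is precisely the mechanism that made the proof of Theorem \ref{Theorem3.1} work, and it carries over here unchanged.
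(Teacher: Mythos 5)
Your proposal is correct and follows essentially the same route as the paper: the paper's proof likewise sets $({\bf w},\varphi)=({\bf v},\psi)$, invokes the G\aa rding inequalities \eqref{Eq:Theorem3.1-Eq1}--\eqref{Eq:Theorem3.1-Eq2} for $W_f$ and $W_s$ (with the harmless factor $1/\eta>0$), and absorbs the six mixed terms into the $H^{1/2-\epsilon}$ remainder via the order-$\le 0$ mapping properties of $V_s$, $V_f$, $K_f$, $K_f'$, $K_s$, $K_s'$, exactly as in Theorem \ref{Theorem3.1}. The only cosmetic difference is that the paper groups the four $K$-type cross terms into a single estimate \eqref{Eq:Therorem42Eq6} rather than bounding them one by one.
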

\begin{proof}
The proof follows strictly that of Theorem \ref{Theorem3.1}. In \eqref{Eq:SesquilinearForm2}, we set $({\bf w},\varphi)$ to be $({\bf v},\psi)$,  and thus obtain
\ben
B({\bf v},\psi;{\bf v},\psi)  & =& \frac{1}{\eta}\langle W_f\psi,\psi\rangle -\langle{\bf n}\cdot V_s({\bf n}\psi),\psi\rangle + \langle K_f^{'}({\bf n}\cdot {\bf v}),\psi\rangle - \langle{\bf n}\cdot K_s{\bf v},\psi\rangle \nonumber\\
& + &   \langle W_s{\bf v}, {\bf v}\rangle - \eta\langle {\bf n}V_f({\bf n}\cdot {\bf v}), {\bf v}\rangle + \langle{\bf n}K_f\psi,{\bf v}\rangle - \langle K_s^{'}({\bf n}\psi),{\bf v}\rangle.
\enn
We first notice that
\be
\label{Eq:Therorem42Eq2}
\mbox{Re}\,\{\langle W_f\psi,\psi\rangle\}&\ge& \alpha_1 \|\psi\|_{H^{1/2}(\Gamma)}^2 - \beta_1\|\psi\|_{H^{1/2-\epsilon}(\Gamma)}^2, \\
\label{Eq:Therorem42Eq3}
\mbox{Re}\,\{\langle W_s{\bf v},{\bf v}\rangle\} &\ge& \alpha_2 \|{\bf v}\|_{(H^{1/2}(\Gamma))^2}^2 - \beta_2\|{\bf v}\|_{H^{1/2-\epsilon}(\Gamma)}^2.
\en
where $\alpha_1,\alpha_2>0$, $\beta_1,\beta_2\ge 0$ and $1/2>\epsilon>0$ are all constants.  Similarly, we also have
\be
\label{Eq:Therorem42Eq4}
\mbox{Re}\,\{-\langle{\bf n}\cdot V_s({\bf n}\psi),\psi\rangle\} &\ge& -\beta_3\|\psi\|_{H^{1/2-\epsilon}(\Gamma)}^2,\\
\label{Eq:Therorem42Eq5}
\mbox{Re}\,\{-\langle {\bf n}V_f({\bf n}\cdot {\bf v}), {\bf v}\rangle \} &\ge& -\beta_4\|{\bf v}\|_{(H^{1/2-\epsilon}(\Gamma))^2}^2,
\en
and
\be
\label{Eq:Therorem42Eq6}
\mbox{Re}\,\left\{\langle K_f^{'}({\bf n}\cdot {\bf v}),\psi\rangle - \langle{\bf n}\cdot K_s{\bf v},\psi\rangle + \langle{\bf n}K_f\psi,{\bf v}\rangle - \langle K_s^{'}({\bf n}\psi),{\bf v}\rangle\right\}\nonumber\\
\ge -\beta_5\left(\|{\bf v}\|_{(H^{1/2-\epsilon}(\Gamma))^2}^2 + \|\psi\|_{H^{1/2-\epsilon}(\Gamma)}^2\right),
\en
where  $\beta_j\ge 0,\,j=3,4,5$ and $1/2>\epsilon>0$ are constants. {The  inequality \eqref{Eq:Garding2} then  follows  immediately from  \eqref{Eq:Therorem42Eq2}--\eqref{Eq:Therorem42Eq6}. }
\end{proof}

\begin{theorem}
\label{Theorem4.3}
If the surface $\Gamma$ and the material parameter $(\mu,\lambda,\rho)$ are such that there are no traction free solutions, then variational equation \eqref{Eq:WeakForm2} has a unique solution $({\bf v},\psi)  \in  \mathcal{H}(\Gamma).$
\end{theorem}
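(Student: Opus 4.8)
The plan is to obtain Theorem~\ref{Theorem4.3} from the G\aa rding inequality of Theorem~\ref{Theorem4.2} together with the uniqueness statement of Theorem~\ref{Theorem4.1}, by exactly the same Fredholm-alternative mechanism that yields Theorem~\ref{Theorem3.3} from Theorems~\ref{Theorem3.1} and \ref{Theorem4.1} in the direct-method case: "uniqueness implies existence". So the only real work is (i) to cast the variational problem \eqref{Eq:WeakForm2} as an operator equation to which Fredholm theory applies, and (ii) to verify injectivity of the associated operator.

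First I would record the mapping properties. Since $\Gamma$ is smooth, the operators $W_s$, $K_s$, $K_s^{'}$, $W_f$, $K_f$, $K_f^{'}$, $V_s$, $V_f$ are continuous on the Sobolev scales listed in Section~\ref{sec:3}; in particular each of the eight duality pairings in \eqref{Eq:SesquilinearForm2} is well defined for $({\bf v},\psi),({\bf w},\varphi)\in\mathcal{H}(\Gamma)$, so $B(\cdot\,;\,\cdot)$ is a bounded sesquilinear form on $\mathcal{H}(\Gamma)\times\mathcal{H}(\Gamma)$. Because the $L^2$ duality pairing $\langle\cdot,\cdot\rangle$ between $H^{-1/2}(\Gamma)$ and $H^{1/2}(\Gamma)$ (and between $(H^{-1/2}(\Gamma))^2$ and $(H^{1/2}(\Gamma))^2$) is non-degenerate, the variational equation \eqref{Eq:WeakForm2} is equivalent to the operator system \eqref{Eq:IDBIE5}--\eqref{Eq:IDBIE6} posed in the dual space $\mathcal{H}(\Gamma)'=(H^{-1/2}(\Gamma))^2\times H^{-1/2}(\Gamma)$; write $\mathcal{B}:\mathcal{H}(\Gamma)\to\mathcal{H}(\Gamma)'$ for the induced bounded linear operator.

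Next I would invoke the G\aa rding inequality \eqref{Eq:Garding2}: it says that $\mathcal{B}=\mathcal{B}_0+\mathcal{C}$, where $\mathcal{B}_0$ is $\mathcal{H}(\Gamma)$-coercive (hence boundedly invertible by Lax--Milgram) and $\mathcal{C}$ is bounded from $(H^{1/2-\epsilon}(\Gamma))^2\times H^{1/2-\epsilon}(\Gamma)$ into $\mathcal{H}(\Gamma)'$, and therefore compact on $\mathcal{H}(\Gamma)$ in view of the compact embedding $H^{1/2}(\Gamma)\hookrightarrow H^{1/2-\epsilon}(\Gamma)$. Consequently $\mathcal{B}$ is a Fredholm operator of index zero, so the Fredholm alternative reduces existence and uniqueness of the solution of \eqref{Eq:WeakForm2} to injectivity of $\mathcal{B}$. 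To establish injectivity, let $({\bf v}_0,\psi_0)\in\mathcal{H}(\Gamma)$ satisfy $B({\bf v}_0,\psi_0;{\bf w},\varphi)=0$ for all $({\bf w},\varphi)\in\mathcal{H}(\Gamma)$; by the equivalence above, $({\bf v}_0,\psi_0)$ solves the homogeneous system \eqref{Eq:IDBIE5}--\eqref{Eq:IDBIE6} in $\mathcal{H}(\Gamma)'$. Under the hypothesis that $\Gamma$ and $(\mu,\lambda,\rho)$ admit no traction free solution, Theorem~\ref{Theorem4.1} forces $({\bf v}_0,\psi_0)=(0,0)$; note that the proof of Theorem~\ref{Theorem4.1} only uses the potential representations \eqref{Eq:FluidBRFInside}--\eqref{Eq:SolidBRFOutside}, the standard jump relations and the uniqueness result for the transmission problem, all valid for densities in $H^{1/2}(\Gamma)$, so no extra regularity of $({\bf v}_0,\psi_0)$ is needed. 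Hence $\mathcal{B}$ is injective, and the Fredholm alternative delivers the claimed unique solvability.

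The step I expect to be the main obstacle is the careful bookkeeping in casting \eqref{Eq:WeakForm2} as the operator equation \eqref{Eq:IDBIE5}--\eqref{Eq:IDBIE6} in $\mathcal{H}(\Gamma)'$: one must check that each term of \eqref{Eq:SesquilinearForm2} corresponds to a well-defined element of $\mathcal{H}(\Gamma)'$ under the operators' mapping properties (e.g. $\tfrac{1}{\eta}W_f\psi\in H^{-1/2}(\Gamma)$ and ${\bf n}\cdot V_s({\bf n}\psi)\in H^{1/2}(\Gamma)\subset H^{-1/2}(\Gamma)$ pair against $\varphi\in H^{1/2}(\Gamma)$, and symmetrically for the vector block), so that the variational identity is genuinely equivalent to the operator system to which Theorem~\ref{Theorem4.1} applies. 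Once this equivalence is in place, the compactness argument and the appeal to the Fredholm alternative are entirely routine and follow verbatim the reasoning behind Theorem~\ref{Theorem3.3}.
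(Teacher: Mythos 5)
Your proposal is correct and follows exactly the paper's own (very brief) argument: uniqueness comes from Theorem~\ref{Theorem4.1}, and existence then follows from the G\aa rding inequality of Theorem~\ref{Theorem4.2} via the Fredholm alternative. You merely spell out the standard coercive-plus-compact splitting and the identification of \eqref{Eq:WeakForm2} with the operator system \eqref{Eq:IDBIE5}--\eqref{Eq:IDBIE6}, which the paper leaves implicit.
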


{We note that the uniqueness of variational equation \eqref{Eq:WeakForm2} is an immediate result of  Theorem \ref{Theorem4.1}.  Again, G\aa rding's inequality and the uniqueness  lead to the existence of a weak solution of variational equation \eqref{Eq:WeakForm2}.}
\section{Burton-Miller formulation based on direct method}
\label{sec:5}
In this section, we {apply the} Burton-Miller formulation { to boundary integral
equations from the direct method in order to remove} irregular values of $-k^2$. We also study the  uniqueness and existence of the weak solution for the derived system of boundary integral equations.

\subsection{Boundary integral equations}
\label{sec:5.1}
We now combine the boundary integral equations \eqref{Eq:FluidBIE}--\eqref{Eq:FluidBIE1} together with transmission conditions  \eqref{Eq:TransCond1}--\eqref{Eq:TransCond2} to obtain a system of coupled boundary integral equations for a pair of unknown functions ${\bf u}$ and $p$, i.e.,
\be
W_s{\bf u}(x) + \left( \frac{1}{2}I - K_s^{'}\right)(p{\bf n})(x) &=& f_1,    \label{Eq:BMBIE1}   \\
\eta\left(\frac{1}{2}I + K_f^{'}+\beta V_f\right)({\bf u}\cdot {\bf n})(x) + \left[\beta\left(\frac{1}{2}I - K_f\right)+ W_f\right] p(x)  &=& \widetilde f_2.   \label{Eq:BMBIE2}
\en
where
\ben
\widetilde f_2 &=& \left(\frac{1}{2}I + K_f^{'}+{\beta} V_f\right)\frac{\partial p^{inc}}{\partial n}(x),
\enn
and {$\beta$} is a constant at our disposal. The unique solvability of \eqref{Eq:BMBIE1}--\eqref{Eq:BMBIE2} is given in the following theorem.
\begin{theorem}
\label{Theorem5.11}
If\\
(a) \, the surface $\Gamma$ and the material parameters $(\mu,\lambda,\rho)$ are such that there are no traction free solutions,\\
(b) \, $\mbox{Im}\,\beta\ne 0$, \\
the system of boundary integral equations \eqref{Eq:BMBIE1}--\eqref{Eq:BMBIE2} is uniquely solvable.
\end{theorem}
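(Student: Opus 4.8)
The plan is to follow the proof of Theorem~\ref{Theorem3.11} almost verbatim, replacing assumption~(b) there (that $-k^2$ is not an interior Neumann eigenvalue of the Laplacian) by an impedance argument that makes the Burton--Miller parameter $\beta$ do its job. As always it suffices to show that the homogeneous system has only the trivial solution, so suppose $({\bf u}_0,p_0)\in\mathcal{H}(\Gamma)$ solves the homogeneous form of \eqref{Eq:BMBIE1}--\eqref{Eq:BMBIE2}.

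First I would introduce, exactly as in the proof of Theorem~\ref{Theorem3.11}, an exterior elastic field ${\bf u}_e$ in $\Omega^c$ and an interior acoustic field $p_i$ in $\Omega$ given by the same integral representations: for ${\bf u}_e$, the Navier single- and double-layer potentials with densities $-{\bf n}p_0$ and ${\bf u}_0$; for $p_i$, the Helmholtz double- and single-layer potentials with densities $p_0$ and $\eta\,{\bf u}_0\cdot{\bf n}$. Letting $x\to\Gamma$ and invoking the jump relations, the homogeneous form of \eqref{Eq:BMBIE1} is exactly the statement ${\bf T}{\bf u}_e=0$ on $\Gamma$; and, after identifying the operator combinations appearing in \eqref{Eq:BMBIE2} with $-\partial p_i/\partial n$ and $-p_i|_\Gamma$ via the same jump relations, the homogeneous form of \eqref{Eq:BMBIE2} becomes the impedance (Robin) condition
\[
\frac{\partial p_i}{\partial n} + \beta\,p_i = 0 \qquad\text{on }\Gamma .
\]

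The crucial step, and the only place where assumption~(b) enters, is to conclude $p_i\equiv 0$ in $\Omega$ from this condition. Since $p_i\in H^1(\Omega)$ satisfies $\Delta p_i+k^2p_i=0$ with $k>0$ real, Green's first identity gives $\int_\Gamma \frac{\partial p_i}{\partial n}\,\overline{p_i}\,ds=\int_\Omega\bigl(|\nabla p_i|^2-k^2|p_i|^2\bigr)\,dx\in\R$; substituting the impedance condition and taking imaginary parts yields $\mbox{Im}\,\beta\int_\Gamma|p_i|^2\,ds=0$, so $p_i=0$ on $\Gamma$ because $\mbox{Im}\,\beta\neq 0$, and then $\partial p_i/\partial n=-\beta p_i=0$ on $\Gamma$ as well. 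Holmgren's uniqueness theorem (unique continuation for the Helmholtz equation) then forces $p_i\equiv 0$ in $\Omega$. In parallel, ${\bf u}_e$ solves the exterior traction-free elastic scattering problem in $\Omega^c$ with the elastic radiation condition, which is unconditionally uniquely solvable for real $\omega$ (Rellich's lemma and unique continuation), so ${\bf u}_e\equiv 0$ in $\Omega^c$. Hence $p_i$, $\partial p_i/\partial n$ and ${\bf u}_e$ all vanish on $\Gamma$.

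Reading the boundary traces of the two representations off $\Gamma$ now produces precisely the four boundary identities that appear in the proof of Theorem~\ref{Theorem3.11} (the analogues of \eqref{Eq:Th31BIE3}--\eqref{Eq:Th31BIE4} together with the two vanishing Neumann/traction traces), and from here the argument is identical to that proof: one defines the complementary fields ${\bf u}_i$ in $\Omega$ and $p_e$ in $\Omega^c$ by the same integral representations, computes the jumps, and checks that $({\bf u}_i,p_e)$ is a classical solution of the homogeneous problem \eqref{Eq:Navier}--\eqref{Eq:RadiationCond}; assumption~(a), together with Theorem~\ref{th:2.1} and Lemma~\ref{le:2.2}, then gives ${\bf u}_i\equiv 0$ in $\Omega$ and $p_e\equiv 0$ in $\Omega^c$, whence $p_0=0$ and ${\bf u}_0=0$ on $\Gamma$ by the jump relations. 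I expect the only genuine work to be the bookkeeping that rewrites \eqref{Eq:BMBIE2} as the impedance condition; once that identification is in place, well-posedness of the interior impedance problem with $\mbox{Im}\,\beta\neq 0$ steps in cleanly for assumption~(b) of Theorem~\ref{Theorem3.11} and there is no real obstacle.
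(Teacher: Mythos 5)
Your proposal is correct and follows essentially the same route as the paper: both reduce the homogeneous Burton--Miller system to the impedance condition $\partial p_i/\partial n+\beta p_i=0$ on $\Gamma$ for the auxiliary interior field $p_i$ of Theorem \ref{Theorem3.11}, use a Green identity together with $\mbox{Im}\,\beta\neq 0$ to force $p_i=\partial p_i/\partial n=0$ on $\Gamma$, and then finish exactly as in Theorem \ref{Theorem3.11}. The only cosmetic difference is that you use Green's first identity (and an optional Holmgren step) where the paper uses Green's second identity; the conclusion is identical.
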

\begin{proof}
It is sufficient to prove that the corresponding homogeneous system has only the trivial solution. Suppose that $({\bf u}_0,p_0)$ is a solution of the corresponding homogeneous system of \eqref{Eq:BMBIE1}--\eqref{Eq:BMBIE2}. Using the same notations in Theorem \ref{Theorem3.11}, we obtain from the homogeneous form of (\ref{Eq:BMBIE2}) that
\be
\label{511-1}
\frac{\pa p_i}{\pa n}+\beta p_i=0\quad\mbox{on}\quad \Gamma.
\en
Applying Green's second identity to $p_i$ and its complex conjugate $\ov{p_i}$ we obtain
\ben
0 &=& \int_\Omega\left(p_i\Delta\ov{p_i}-\ov{p_i}\Delta p_i\right)dx \\
&=& \int_\Gamma\left(p_i\frac{\pa \ov{p_i}}{\pa n}-\ov{p_i}\frac{\pa p_i}{\pa n}\right)ds\\
&=& 2i\mbox{Im}\,\beta\int_\Gamma|p_i|^2ds.
\enn
Then it follows that $p_i=0$ on $\Gamma$ provided that $\mbox{Im}\,\beta\ne 0$ and equation (\ref{511-1}) yields also $\pa p_i/\pa n=0$ on $\Gamma$. The rest of the proof follows immediately from the same techniques described in Theorem \ref{Theorem3.11}.
\end{proof}

\subsection{Weak formulation}
\label{sec:5.2}
The system of boundary integral equations \eqref{Eq:BMBIE1}--\eqref{Eq:BMBIE2} is converted to its variational formulation which takes the standard form: {\it Given $p^{inc}$ and $\partial p^{inc}/\partial n$, find $({\bf u},p) \in  \mathcal{H}(\Gamma)$ satisfying}
\be
\label{Eq:WeakForm3}
C({\bf u},p;{\bf v},q) = H({\bf v},q),\quad \forall\,({\bf v},q)\in \mathcal{H}(\Gamma).
\en
The sesquilinear form $C({\bf u},p;{\bf v},q): \mathcal{H}(\Gamma)\times \mathcal{H}(\Gamma)\mapsto \R$  is given by
\be  \label{Eq:SesquilinearForm3}
C({\bf u},p;{\bf v},q)  &=& \left\langle W_s{\bf u}, {\bf v}\right\rangle + \left\langle \left( \frac{1}{2}I - K_s^{'}\right)p{\bf n}, {\bf v}\right\rangle\nonumber\\
 &+& \eta\left\langle \left(\frac{1}{2}I + K_f^{'}+\beta V_f\right)({\bf u}\cdot {\bf n}),q\right\rangle + \left\langle \left[\beta\left(\frac{1}{2}I - K_f\right)+ W_f\right]p,q\right\rangle
\en
and the linear functional $H({\bf v},q)$ is defined on $\mathcal{H}(\Gamma)$ by
\ben
H({\bf v},q) = \langle f_1,{\bf v}\rangle + \langle \widetilde f_2, q\rangle.      \label{Eq:LinearFunctional3}
\enn
In order to show the existence of a weak solution of the variational equation \eqref{Eq:WeakForm3}, we need the next theorem.
\begin{theorem}
\label{Theorem5.1}
The sesquilinear form $C({\bf u},p;{\bf v},q)$ satisfies a G{\aa}rding's inequality in the form
\ben
Re\{ C({\bf u},p; {\bf u},p)\} &\ge& \alpha\left(\|{\bf u}\|_{(H^{1/2}(\Gamma))^2}^2 + \|p\|_{H^{1/2}(\Gamma)}^2\right) \nonumber\\
&-& \beta\left(\|{\bf u}\|_{(H^{1/2-\epsilon}(\Gamma))^2}^2 + \|p\|_{H^{1/2-\epsilon}(\Gamma)}^2\right)      \label{Eq:Garding3}
\enn
for all $({\bf u}, p)\in \mathcal{H}(\Gamma)$ where  $\alpha>0$ , $\beta\ge 0$ and $0<\epsilon<1/2$ are all constants.
\end{theorem}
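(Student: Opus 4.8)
The plan is to repeat, almost verbatim, the argument used for Theorems~\ref{Theorem3.1} and~\ref{Theorem4.2}. First I would set $({\bf v},q)=({\bf u},p)$ in the sesquilinear form \eqref{Eq:SesquilinearForm3}, which gives
\[
C({\bf u},p;{\bf u},p)=\langle W_s{\bf u},{\bf u}\rangle+\left\langle\left(\tfrac{1}{2}I-K_s^{'}\right)p{\bf n},{\bf u}\right\rangle+\eta\left\langle\left(\tfrac{1}{2}I+K_f^{'}+\beta V_f\right)({\bf u}\cdot{\bf n}),p\right\rangle+\left\langle\left[\beta\left(\tfrac{1}{2}I-K_f\right)+W_f\right]p,p\right\rangle .
\]
The two terms $\langle W_s{\bf u},{\bf u}\rangle$ and $\langle W_fp,p\rangle$ are the only ones contributing to the principal ($H^{1/2}$) part: by the estimates \eqref{Eq:Theorem3.1-Eq2} (from \cite{HKR00}) and \eqref{Eq:Theorem3.1-Eq1} (from \cite{HW08}) each of them already satisfies a G\aa rding inequality, and their sum produces $\alpha\big(\|{\bf u}\|_{(H^{1/2}(\Gamma))^2}^2+\|p\|_{H^{1/2}(\Gamma)}^2\big)$ with $\alpha=\min\{\alpha_1,\alpha_2\}$, plus an admissible lower-order remainder.

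Next I would check that every remaining term is a compact (lower-order) perturbation. Using the mapping properties of the boundary integral operators recalled after \eqref{Eq:BIE2} (so that $K_s^{'}$ maps $(H^0(\Gamma))^2$ boundedly into itself and $K_f^{'}$, $V_f$ map $H^0(\Gamma)$ into $H^1(\Gamma)$), together with the standard fact that on the smooth curve $\Gamma$ the operator $K_f$ is bounded on $H^0(\Gamma)$ and that multiplication by ${\bf n}$ and the identity are bounded on $L^2(\Gamma)=H^0(\Gamma)$, I would estimate each cross term by its $H^0$-norms via Cauchy--Schwarz and then invoke the continuous embedding $H^{1/2-\epsilon}(\Gamma)\hookrightarrow H^0(\Gamma)$ followed by Young's inequality. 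Exactly as in \eqref{Eq:Theorem3.1-Eq3}--\eqref{Eq:Theorem3.1-Eq4} this yields
\[
\big|\langle(\tfrac{1}{2}I-K_s^{'})p{\bf n},{\bf u}\rangle\big|+\big|\eta\langle(\tfrac{1}{2}I+K_f^{'}+\beta V_f)({\bf u}\cdot{\bf n}),p\rangle\big|+\big|\beta\langle(\tfrac{1}{2}I-K_f)p,p\rangle\big|\le c\big(\|{\bf u}\|_{(H^{1/2-\epsilon}(\Gamma))^2}^2+\|p\|_{H^{1/2-\epsilon}(\Gamma)}^2\big).
\]
Adding this to the two principal estimates yields the asserted G\aa rding inequality, with $\alpha=\min\{\alpha_1,\alpha_2\}$ and the remaining constant the sum of the lower-order constants.

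The one step where a little care is needed — the only genuine difference from Theorem~\ref{Theorem3.1} — is the scalar Burton--Miller term $\beta\langle(\tfrac{1}{2}I-K_f)p,p\rangle$, in which $\tfrac{1}{2}\beta I$ is neither smoothing nor compact, so a priori one might fear it contaminates the principal part. Since, however, only a G\aa rding inequality (not coercivity) is required, it is enough that $\|p\|_{H^0(\Gamma)}\le\|p\|_{H^{1/2-\epsilon}(\Gamma)}$, whence $\mbox{Re}\,\{\beta\langle(\tfrac{1}{2}I-K_f)p,p\rangle\}\ge-|\beta|\,c\,\|p\|_{H^{1/2-\epsilon}(\Gamma)}^2$ and the term is harmless; note also that no hypothesis on the Burton--Miller parameter $\beta$ — not even $\mbox{Im}\,\beta\ne0$ — is needed here, that assumption entering only in the uniqueness result Theorem~\ref{Theorem5.11}. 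Combining the G\aa rding inequality just obtained with that uniqueness and the Fredholm alternative then gives, as in Theorem~\ref{Theorem3.3}, existence and uniqueness of the weak solution of \eqref{Eq:WeakForm3}.
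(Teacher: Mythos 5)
Your proposal is correct and follows essentially the same route as the paper, which simply states that Theorem \ref{Theorem5.1} ``can be proved by following the same arguments in Theorem 4.2''; you have filled in exactly those arguments, including the correct observation that the non-smoothing term $\tfrac{\beta}{2}\langle p,p\rangle$ is harmless because $\|p\|_{H^{0}(\Gamma)}\le\|p\|_{H^{1/2-\epsilon}(\Gamma)}$ and only a G\aa rding inequality, not coercivity, is claimed. No gaps.
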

\begin{proof}
It can be proved by following the same arguments in Theorem 4.2.
\end{proof}

Now, the existence result follows immediately from the Fredholm's alternative: uniqueness implies existence. Therefore, we have the following theorem.

\begin{theorem}
\label{Theorem5.3}
The variational equation \eqref{Eq:WeakForm3} admits a unique solution $({\bf u},p)\in  \mathcal{H}(\Gamma)$ {under the same assumptions in Theorem \ref{Theorem5.11}.}
\end{theorem}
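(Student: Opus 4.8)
The plan is to reproduce, in this Burton--Miller setting, the argument already used for Theorems~\ref{Theorem3.3} and~\ref{Theorem4.3}: combine the G\aa rding inequality of Theorem~\ref{Theorem5.1} with the injectivity statement of Theorem~\ref{Theorem5.11} and conclude by the Fredholm alternative. First I would recast \eqref{Eq:WeakForm3} in operator form, letting $\mathcal{C}:\mathcal{H}(\Gamma)\to\mathcal{H}(\Gamma)^{*}$ be the bounded linear operator induced by the sesquilinear form $C(\cdot\,;\,\cdot)$, so that the variational equation reads $\mathcal{C}(\mathbf{u},p)=H$. By Theorem~\ref{Theorem5.1}, $C$ satisfies a G\aa rding inequality on $\mathcal{H}(\Gamma)=(H^{1/2}(\Gamma))^{2}\times H^{1/2}(\Gamma)$ with lower-order norm $H^{1/2-\epsilon}$; since the embedding $H^{1/2}(\Gamma)\hookrightarrow H^{1/2-\epsilon}(\Gamma)$ is compact, this yields a splitting $\mathcal{C}=\mathcal{C}_{0}+\mathcal{K}$ with $\mathcal{C}_{0}$ coercive (hence boundedly invertible by Lax--Milgram) and $\mathcal{K}$ compact. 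Therefore $\mathcal{C}$ is a Fredholm operator of index zero, and \eqref{Eq:WeakForm3} is uniquely solvable for every right-hand side if and only if the homogeneous variational equation $C(\mathbf{u},p;\mathbf{v},q)=0$ for all $(\mathbf{v},q)\in\mathcal{H}(\Gamma)$ admits only the trivial solution.

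It then remains to prove injectivity. I would take a homogeneous weak solution $(\mathbf{u}_{0},p_{0})\in\mathcal{H}(\Gamma)$ and observe that, because the duality pairings in \eqref{Eq:SesquilinearForm3} range over the full test space $\mathcal{H}(\Gamma)$ and the operator blocks $W_{s}$, $K_{s}'$, $V_{f}$, $K_{f}$, $K_{f}'$, $W_{f}$ respect the mapping properties recorded in Section~\ref{sec:3}, the identity $C(\mathbf{u}_{0},p_{0};\mathbf{v},q)=0$ forces the two components of the boundary integral system to vanish as elements of $(H^{-1/2}(\Gamma))^{2}$ and $H^{-1/2}(\Gamma)$, respectively. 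In other words, $(\mathbf{u}_{0},p_{0})$ solves the homogeneous form of \eqref{Eq:BMBIE1}--\eqref{Eq:BMBIE2}. Since the hypotheses (a) and (b) assumed in the present theorem are exactly those of Theorem~\ref{Theorem5.11}, that theorem applies verbatim and gives $(\mathbf{u}_{0},p_{0})=(\mathbf{0},0)$. Feeding this back into the Fredholm alternative yields existence and uniqueness of the weak solution, which completes the proof.

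The step I expect to require the most care is the passage between the variational formulation \eqref{Eq:WeakForm3} and the distributional boundary integral system \eqref{Eq:BMBIE1}--\eqref{Eq:BMBIE2}: one must check that testing in $\mathcal{H}(\Gamma)$ introduces no spurious kernel and that the ranges of all eight operator blocks lie in the dual scale against which one tests, so that a homogeneous weak solution is genuinely a homogeneous BIE solution and Theorem~\ref{Theorem5.11} can be invoked without modification. The coercivity-modulo-compactness part is a direct consequence of Theorem~\ref{Theorem5.1}, and the remaining bookkeeping mirrors the proofs of Theorems~\ref{Theorem3.3} and~\ref{Theorem4.3}.
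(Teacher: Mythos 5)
Your argument is exactly the one the paper uses: G\aa rding's inequality from Theorem \ref{Theorem5.1} plus compactness of the embedding $H^{1/2}(\Gamma)\hookrightarrow H^{1/2-\epsilon}(\Gamma)$ gives a Fredholm operator of index zero, and injectivity is supplied by Theorem \ref{Theorem5.11}, so uniqueness implies existence. The paper states this in a single sentence; your write-up merely makes the same steps explicit, so there is nothing to add.
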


\begin{remark}
It can be seen from Theorem 5.2 that with the help of choosing purely imaginary value $\beta$, the condition (b) in Theorem \ref{Theorem3.11} for the uniqueness of direct method can be removed by Burton-Miller formulation. This technique can also be applied for indirect boundary integral equations, see \cite{LM95} for example. In addition, it can be observed that except the Jones frequency, there is no excluded spectrum of eigenvalues for the indirect method proposed in this paper without the help of Burton-Miller formulation. We also numerically discuss the solvability of the proposed three methods in Section 7.
\end{remark}

\section{Regularization formulations for hypersingular boundary integral operators}
\label{sec:6}
{In all the above variational formulations of boundary integral equations, we see that hypersingular boundary integral operators for the Helmholtz equation as well as for the time-harmonic Navier equation are dominated boundary integral operators. They will play a crucial role in the numerical implementation.  From computational point of view, as is well known, it is difficult to obtain accurate numerical approximation for boundary integral operators with highly singular kernels.  For this reason, we now present regularization formulas for these  hypersingular  operators which will allow us to treat  hypersingular kernels in terms of weakly singular kernels instead. These formulas will be employed in our numerical experiments to appear in a forthcoming  communication.}

 {Recall the fundamental solutions of the Helmholtz equation \eqref{Eq:Helmholtz} and of the time-harmonic Navier equation \eqref{Eq:Navier} given respectively  in  \eqref{Eq:HelmholtzFS}
and \eqref{Eq:NavierFS}, namely
\begin{gather*}
\gamma_k(x,y) = \frac{i}{4}H_0^{(1)}(k|x-y|) \\
{\bf E}(x,y) = \frac{1}{\mu}\gamma_{k_s}(x,y){\bf I} + \frac{1}{\rho\omega^2}\nabla_x\nabla_x\left[\gamma_{k_s}(x,y) - \gamma_{k_p}(x,y)\right]
\end{gather*}
with acoustic wave number $k$, and shear and compressional wave numbers denoted by
\ben
k_s = \omega\sqrt{\rho/\mu},\quad k_p = \omega \sqrt{\rho/(\lambda + 2\mu)}.
\enn
}{For} the sake of simplicity throughout this section, {we denote by $R(x,y) =
\gamma_{k_s}(x,y) - \gamma_{k_p}(x,y)$,}
${\bf n}_{x} = (n_{x}^{1} ,n_{x}^{2})^{T}$ the outward unit normal at $x\in\Gamma$, ${\bf t}_{x} =(- n_{x}^{2} ,n_{x}^{1})^{T}$ the tangent vector, $\nabla_{x}=(\partial/\partial x_{1},\partial/\partial x_{2})^{T}$ the gradient operator and $\delta _{ij}$ the Kronecker delta function of $i$ and $j$.

{We begin with the regularization formulation of the hypersingular boundary integral operator $W_f$ associated with the Helmholtz equation.  Similar  to Lemma 1.2.2 in \cite{HW08}, we have the lemma.}
\begin{lemma}\label{lm:6.1}
The operator $W_f$ can be expressed as a composition of tangential derivatives, the outward unit normal and the simple layer potential operator $V_f$ taking the form
\be \label{Eq:FluidRegularization}
W_{f} p(x)=-\frac{d}{ds_x}V_f\left(\frac{dp}{ds}\right)(x)-k^2{\bf n}_x^T V_f(p{\bf n})(x).
\en
\end{lemma}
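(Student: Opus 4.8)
The plan is to carry over to two dimensions the classical Maue-type derivation that rewrites a hypersingular operator through the single-layer operator, following the pattern of Lemma~1.2.2 in \cite{HW08}. Writing $W_f$ in its defining form $W_f p(x)=-\frac{\partial}{\partial n_x}\int_\Gamma\frac{\partial\gamma_k}{\partial n_y}(x,y)\,p(y)\,ds_y$, I would first perform the computation for $x\notin\Gamma$, where every kernel is smooth, and pass to the limit $x\to\Gamma$ only at the end, using that $V_f$ and its tangential derivative are continuous across $\Gamma$ while the normal derivative of the double-layer potential has a two-sided trace. Since $\gamma_k(x,y)$ depends only on $|x-y|$ we have $\nabla_x\gamma_k(x,y)=-\nabla_y\gamma_k(x,y)$, hence $\nabla_x\nabla_x\gamma_k=\nabla_y\nabla_y\gamma_k=:H(x,y)$ (the symmetric Hessian) and $\nabla_x\nabla_y\gamma_k=-H(x,y)$. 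Differentiating the double-layer kernel then gives $\nabla_x\big[\tfrac{\partial\gamma_k}{\partial n_y}(x,y)\big]=-({\bf n}_y\cdot\nabla_y)\nabla_y\gamma_k(x,y)=-H(x,y){\bf n}_y$, so that
\[
\frac{\partial}{\partial n_x}\int_\Gamma\frac{\partial\gamma_k}{\partial n_y}(x,y)\,p(y)\,ds_y=-\int_\Gamma{\bf n}_x^{T}H(x,y){\bf n}_y\,p(y)\,ds_y .
\]

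The key point is then the pointwise algebraic identity, valid for $x\neq y$,
\[
{\bf n}_x^{T}H(x,y){\bf n}_y+{\bf t}_x^{T}H(x,y){\bf t}_y+k^2({\bf n}_x\cdot{\bf n}_y)\gamma_k(x,y)=0 .
\]
I would prove it as follows: $\{{\bf n}_x,{\bf t}_x\}$ and $\{{\bf n}_y,{\bf t}_y\}$ are positively oriented orthonormal frames, so there is a planar rotation $Q$ with $Q{\bf n}_x={\bf n}_y$ and $Q{\bf t}_x={\bf t}_y$; since ${\bf n}_x{\bf n}_x^{T}+{\bf t}_x{\bf t}_x^{T}=I$ this gives ${\bf n}_x^{T}H{\bf n}_y+{\bf t}_x^{T}H{\bf t}_y=\operatorname{tr}(HQ)$ and ${\bf n}_x\cdot{\bf n}_y=\tfrac12\operatorname{tr}Q$. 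Writing $Q=\cos\theta\,I+\sin\theta\,J$ with $J$ the generator of planar rotations, using that $H$ is symmetric with $\operatorname{tr}H=\Delta_y\gamma_k=-k^2\gamma_k$ off the diagonal (the Helmholtz equation) and $\operatorname{tr}(HJ)=0$ (the trace of a symmetric matrix times a skew matrix vanishes), one gets $\operatorname{tr}(HQ)=\cos\theta\,\operatorname{tr}H=-k^2\gamma_k\cos\theta=-k^2({\bf n}_x\cdot{\bf n}_y)\gamma_k$, which is exactly the claimed identity.

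For the tangential term, using $\nabla_x\gamma_k=-\nabla_y\gamma_k$ and integrating by parts along the closed curve $\Gamma$ (no endpoint contributions) I would obtain
\[
\frac{d}{ds_x}V_f\!\Big(\frac{dp}{ds}\Big)(x)=-\int_\Gamma\big({\bf t}_x\cdot\nabla_y\gamma_k(x,y)\big)\frac{dp}{ds_y}(y)\,ds_y=\int_\Gamma\frac{d}{ds_y}\big[{\bf t}_x\cdot\nabla_y\gamma_k(x,y)\big]p(y)\,ds_y=\int_\Gamma{\bf t}_x^{T}H(x,y){\bf t}_y\,p(y)\,ds_y .
\]
Combining the three displays, $W_f p(x)=\int_\Gamma{\bf n}_x^{T}H{\bf n}_y\,p\,ds_y=-\int_\Gamma{\bf t}_x^{T}H{\bf t}_y\,p\,ds_y-k^2\int_\Gamma({\bf n}_x\cdot{\bf n}_y)\gamma_k\,p\,ds_y$, and recognizing the two integrals as $\frac{d}{ds_x}V_f(dp/ds)(x)$ and $k^2\,{\bf n}_x^{T}V_f(p{\bf n})(x)$ yields \eqref{Eq:FluidRegularization}.

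The main obstacle is not this formal algebra but its rigorous justification near the singularity $y=x$: interchanging $\partial_{n_x}$ with the integral produces a non-integrable kernel, so the steps $\nabla_x\int=\int\nabla_x$ and the integration by parts on $\Gamma$ must be performed on $\Gamma\setminus B_\varepsilon(x)$ with $x\notin\Gamma$, with the contributions of the excised arc shown to vanish as $\varepsilon\to0$. Equivalently, and more cleanly, one proves the identity for smooth densities $p$ with $x\notin\Gamma$, uses the mapping properties $V_f:H^{-1/2}(\Gamma)\to H^{1/2}(\Gamma)$, $\frac{d}{ds}:H^{1/2}(\Gamma)\to H^{-1/2}(\Gamma)$ and the two-sided trace of the double-layer normal derivative to let $x\to\Gamma$, and then extends to $p\in H^{1/2}(\Gamma)$ by density and continuity. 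Keeping track of the orientation of $\Gamma$ and of the sign conventions for ${\bf t}_x$ throughout this bookkeeping is where the care is required.
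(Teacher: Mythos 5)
Your derivation is correct and is exactly the classical Maue/Mitzner argument that the paper invokes without proof (it simply points to Lemma 1.2.2 of \cite{HW08} and to \cite{BM71,M49,M66}): with the paper's convention ${\bf t}_x=(-n_x^2,n_x^1)^T$ the two frames are related by a single rotation $Q$, and your kernel identity ${\bf n}_x^{T}H{\bf n}_y+{\bf t}_x^{T}H{\bf t}_y=-k^2({\bf n}_x\cdot{\bf n}_y)\gamma_k$ (from $\operatorname{tr}(HQ)=\cos\theta\,\operatorname{tr}H$ and $\operatorname{tr}H=-k^2\gamma_k$), combined with the tangential integration by parts on the closed curve, yields \eqref{Eq:FluidRegularization} with the correct signs. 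The off-boundary computation followed by the limit $x\to\Gamma$ and a density argument is the standard way to make this rigorous, so nothing essential is missing.
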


Next, we present the regularization formula for the hypersingular boundary operator $W_s$ for the time-harmonic Navier equation in the next theorem and postpone the derivation to Appendix A. Here, we set $A=[0,-1;1,0]$.
\begin{theorem} \label{th:main}
The hypersingular boundary integral operator $W_s$ associated with the time-harmonic Navier equation can be expressed as
\be \label{Eq:Ws.1}
W_s{\bf u}(x)&=&\mu k_{s}^2\int_{\Gamma}{ \left\{{\bf n}_{x}{\bf n}_{y}^{T}R(x,y)-{\bf n}_{x}^{T}{\bf n}_{y}{\bf I}\gamma _{k_{s}}(x,y)-A{\bf n}_{x}^{T} {\bf t}_{y}\gamma _{k_{s}}(x,y)\right\} {\bf u}(y)ds_{y} }\nonumber \\
&+& 2\mu^2 k_s^2\int_{\Gamma}{ A{\bf E}(x,y){\bf n}_{x}^{T} {\bf t}_{y} {\bf u}(y)ds_{y}}\nonumber \\
&-&4\mu ^{2}\int_{\Gamma}  {\frac{{d{\bf E}(x,y)}}{{ds_{x}} }\frac{{d{\bf u}(y)}}{{ds_{y}} }ds_{y}}+\frac{{4\mu ^{2}}}{{\lambda + 2\mu} }\int_{\Gamma}  {\frac{{d\gamma
_{k_{p}} (x,y)} }{{ds_{x}} }\frac{{d{\bf u}(y)}}{{ds_{y}} }ds_{y}} \nonumber\\
&+&2\mu \int_{\Gamma}  {{\bf n}_{x} \nabla _{x}^{T} R(x,y)A\frac{{d{\bf u}(y)}}{{ds_{y}} }ds_{y}}\nonumber\\
&-& 2\mu\int_{\Gamma}  {A\nabla_{x} R( x,y){\bf n}_{x}^{T} \frac{{d{\bf u}(y)}}{{d{s_y}}}ds_{y}},
\en
or
\be \label{Eq:Ws.12}
W_s{\bf u}(x)&=&\mu k_{s}^2\int_{\Gamma}{ \left\{{\bf n}_{x}{\bf n}_{y}^{T}R(x,y)-{\bf n}_{x}^{T}{\bf n}_{y}{\bf I}\gamma _{k_{s}}(x,y)+A{\bf n}_{x}^{T} {\bf t}_{y}\gamma _{k_{s}}(x,y)\right\} {\bf u}(y)ds_{y} }\nonumber \\
&-&4\mu ^{2}\int_{\Gamma}  {\frac{{d{\bf E}(x,y)}}{{ds_{x}} }\frac{{d{\bf u}(y)}}{{ds_{y}} }ds_{y}}+\frac{{4\mu ^{2}}}{{\lambda + 2\mu} }\int_{\Gamma}  {\frac{{d\gamma
_{k_{p}}(x,y) } }{{ds_{x}} }\frac{{d{\bf u}(y)}}{{ds_{y}} }ds_{y}} \nonumber\\
&+&2\mu \int_{\Gamma}  {{\bf n}_{x} \nabla _{x}^{T} R(x,y)A\frac{{d{\bf u}(y)}}{{ds_{y}} }ds_{y}}\nonumber\\
&+& 2\mu\int_{\Gamma}  {A\frac{d}{ds_x}(\nabla_{y} R( x,y)){\bf n}_{y}^{T}{\bf u}(y)ds_{y}}.
\en
\end{theorem}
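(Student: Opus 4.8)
The plan is to derive \eqref{Eq:Ws.1} and \eqref{Eq:Ws.12} directly from the classical definition
$W_s{\bf u}(x) = -{\bf T}_x\int_\Gamma ({\bf T}_y{\bf E}(x,y))^\top {\bf u}(y)\,ds_y$,
by successively trading the two traction operators ${\bf T}_x$ and ${\bf T}_y$ against tangential (arc-length) derivatives. First I would recall the representation \eqref{Eq:NavierFS} of ${\bf E}$ in terms of $\gamma_{k_s}$, $\gamma_{k_p}$ and $R = \gamma_{k_s}-\gamma_{k_p}$, and the decomposition of the traction operator ${\bf T}{\bf u} = \lambda(\divv{\bf u}){\bf n} + 2\mu\,\partial{\bf u}/\partial n + \mu\,{\bf n}\times\curl{\bf u}$. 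The key algebraic identity I would use throughout is the planar analogue of the Günter/Maue rewriting: for a function $\phi$ satisfying $\Delta\phi + k^2\phi = 0$ away from $\Gamma$, the normal derivative $\partial/\partial n$ applied to a potential can be converted to a combination of an arc-length derivative $d/ds$ and a lower-order term proportional to $k^2$, exactly as in Lemma \ref{lm:6.1} for the scalar case. The matrix $A = [0,-1;1,0]$ enters because in two dimensions ${\bf t}_x = A{\bf n}_x$ and $d/ds_x = {\bf t}_x\cdot\nabla_x = ({\bf n}_x)^\top A^\top\nabla_x$, so every appearance of a tangential derivative can be re-expressed via $A$ and $\nabla$; this is the bookkeeping device that produces the $A{\bf n}_x^\top{\bf t}_y$ and ${\bf n}_x\nabla_x^\top R\,A$ terms.

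Concretely, I would organize the computation in three stages. Stage one: compute $({\bf T}_y{\bf E}(x,y))^\top$ explicitly using \eqref{Eq:NavierFS}, splitting it into a part coming from the scalar fundamental solution $\gamma_{k_s}{\bf I}/\mu$ and a part coming from the double-gradient term $\nabla_x\nabla_x R/(\rho\omega^2)$; recall $\mu k_s^2 = \rho\omega^2$ and $(\lambda+2\mu)k_p^2 = \rho\omega^2$, which is where the constants $\mu k_s^2$, $4\mu^2$ and $4\mu^2/(\lambda+2\mu)$ in the statement originate. Stage two: apply the outer ${\bf T}_x$ and integrate by parts on $\Gamma$ (a closed curve, so no boundary terms) to move derivatives off the kernel, using the vector identity for $\curl$ of a potential and the Helmholtz equations $\Delta\gamma_{k_s} = -k_s^2\gamma_{k_s}$, $\Delta\gamma_{k_p} = -k_p^2\gamma_{k_p}$ to absorb the Laplacians that arise. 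Stage three: collect terms; the two forms \eqref{Eq:Ws.1} and \eqref{Eq:Ws.12} differ only in how the last group of terms is integrated by parts — in \eqref{Eq:Ws.1} the derivative $d/ds_y$ stays on ${\bf u}$, while in \eqref{Eq:Ws.12} one integrates by parts once more to put $d/ds_x$ on $\nabla_y R$ and move the remaining derivative off ${\bf u}$, together with using ${\bf t}_y = A{\bf n}_y$ to flip the sign of the $A{\bf n}_x^\top{\bf t}_y$ term and to convert the $2\mu^2 k_s^2\int A{\bf E}\,{\bf n}_x^\top{\bf t}_y{\bf u}$ term into the form appearing in \eqref{Eq:Ws.12}.

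The main obstacle I expect is the careful two-dimensional tensor algebra in stage two: keeping track of which gradient acts on which variable, of the transposes, and of the antisymmetric matrix $A$ when commuting $\nabla_x$ past ${\bf n}_x$ and ${\bf t}_x$ (these are not constant along $\Gamma$, so $d/ds_x$ does not simply commute with multiplication by ${\bf n}_x$; curvature terms must either cancel or be shown to be absorbed into the $\mu k_s^2$ integrals). A second delicate point is justifying the integration by parts and the passage to the limit $x\to\Gamma$ at the level of the regularized (weakly singular) kernels rather than the hypersingular ones — i.e., verifying that after the rewriting every remaining kernel is at worst logarithmically singular, so that all manipulations are legitimate in the duality pairing $\langle W_s{\bf u},{\bf v}\rangle$. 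Since the theorem is stated only at the level of the operator identity and the derivation is deferred to Appendix A, in the main text I would simply record \eqref{Eq:Ws.1}–\eqref{Eq:Ws.12} and refer forward; the appendix then carries out stages one through three in detail, modelled on the three-dimensional computation in \cite{HW08} and the scalar case in Lemma \ref{lm:6.1}.
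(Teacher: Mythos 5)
Your plan matches the paper's Appendix~A derivation essentially step for step: the paper likewise starts from $W_s{\bf u}=-{\bf T}_xK_s{\bf u}$, rewrites both traction operators via the G\"unter-derivative identity ${\bf M}(\partial_x,{\bf n}_x){\bf u}=A\,d{\bf u}/ds_x$ (your ${\bf t}=A{\bf n}$ bookkeeping), integrates by parts on the closed curve, and absorbs the resulting Laplacians using $\Delta\gamma_k=-k^2\gamma_k$ together with $\mu k_s^2=(\lambda+2\mu)k_p^2=\rho\omega^2$, with the two stated forms differing only by one further integration by parts, exactly as you say. The only content your outline leaves implicit---and which is where most of the actual work (and your anticipated curvature cancellations) happens---is the pair of commutator identities such as $\frac{\partial}{\partial n_x}\left(\nabla_yR\right)-{\bf M}(\partial_x,{\bf n}_x)\nabla_yR=-\Delta R\,{\bf n}_x$ and ${\bf M}(\partial_x,{\bf n}_x)\nabla_yR\,{\bf n}_y^{T}-{\bf M}(\partial_y,{\bf n}_y)\nabla_xR\,{\bf n}_x^{T}=A\mu k_s^2\left({\bf E}-\tfrac{1}{\mu}\gamma_{k_s}{\bf I}\right){\bf n}_x^{T}{\bf t}_y$, which generate the $A{\bf E}\,{\bf n}_x^{T}{\bf t}_y$ and $\mu k_s^2$ terms in the final formulas.
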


Based on the regularization formulations {presented in Lemma \ref{lm:6.1} and Theorem \ref{th:main}}, {an integration by parts yields} for all ${\bf u}, {\bf v}\in(H^{1/2} (\Gamma))^2$, $p, q\in H^{1/2}(\Gamma)$ {that}
\ben
\langle W_{f} p,q\rangle=\int_\Gamma\int_\Gamma\gamma_k(x,y)\frac{dp(y)}{ds_y} \frac{d\ov{q}(x)}{ds_x}ds_yds_x -k^2 \int_\Gamma\int_\Gamma {\bf n}_x^T{\bf n}_y \gamma_k(x,y)p(y)\ov{q}(x)ds_yds_x,
\enn
\ben
\langle W_s{\bf u},{\bf v}\rangle &=&\mu k_{s}^2\int_{\Gamma}\int_{\Gamma}{ \ov{\bf v}(x)\cdot\left\{\left[{\bf n}_{x}{\bf n}_{y}^{T}R-{\bf n}_{x}^{T}{\bf n}_{y}{\bf I}\gamma _{k_{s}}-A{\bf n}_{x}^{T} {\bf t}_{y}\gamma _{k_{s}}\right] {\bf u}(y)\right\}ds_{y}ds_x }\nonumber \\
&+& 2\mu^2 k_s^2\int_{\Gamma}\int_{\Gamma}{ \ov{\bf v}(x)\cdot\left\{A{\bf E}(x,y){\bf n}_{x}^{T} {\bf t}_{y} {\bf u}(y)\right\}ds_{x}ds_{y}}\nonumber \\
&+&4\mu ^{2}\int_{\Gamma}\int_{\Gamma} \frac{d\ov{\bf v}(x)}{ds_x}\cdot {\left[{\bf E}(x,y)\frac{{d{\bf u}(y)}}{{ds_{y}} }\right]ds_{y}ds_x}\nonumber \\
&-&\frac{{4\mu ^{2}}}{{\lambda + 2\mu} }\int_{\Gamma}\int_{\Gamma} \frac{d\ov{\bf v}(x)}{ds_x}\cdot  {\left[\gamma_{k_p}(x,y)\frac{{d{\bf u}(y)}}{{ds_{y}} }\right]ds_{y}ds_x} \nonumber\\
&+&2\mu \int_{\Gamma}\int_{\Gamma} \ov{\bf v}(x)\cdot  {\left\{{\bf n}_{x} \nabla _{x}^{T} R(x,y)A\frac{{d{\bf u}(y)}}{{ds_{y}} }\right\}ds_{y}ds_x}\nonumber\\
&-& 2\mu\int_{\Gamma}\int_{\Gamma}\ov{\bf v}(x)\cdot  {\left\{A\nabla_{x} R( x,y){\bf n}_{x}^{T} \frac{{d{\bf u}(y)}}{{d{s_y}}}\right\}ds_{y}ds_x},
\enn
or
\ben
\langle W_s{\bf u},{\bf v}\rangle &=&\mu k_{s}^2\int_{\Gamma}\int_{\Gamma}{ \ov{\bf v}(x)\cdot\left\{\left[{\bf n}_{x}{\bf n}_{y}^{T}R-{\bf n}_{x}^{T}{\bf n}_{y}{\bf I}\gamma _{k_{s}}+A{\bf n}_{x}^{T} {\bf t}_{y}\gamma _{k_{s}}\right] {\bf u}(y)\right\}ds_{y}ds_x }\nonumber \\
&+&4\mu ^{2}\int_{\Gamma}\int_{\Gamma} \frac{d\ov{\bf v}(x)}{ds_x}\cdot {\left[{\bf E}(x,y)\frac{{d{\bf u}(y)}}{{ds_{y}} }\right]ds_{y}ds_x}\nonumber \\
&-&\frac{{4\mu ^{2}}}{{\lambda + 2\mu} }\int_{\Gamma}\int_{\Gamma} \frac{d\ov{\bf v}(x)}{ds_x}\cdot  {\left[\gamma_{k_p}(x,y)\frac{{d{\bf u}(y)}}{{ds_{y}} }\right]ds_{y}ds_x} \nonumber\\
&+&2\mu \int_{\Gamma}\int_{\Gamma} \ov{\bf v}(x)\cdot  {\left\{{\bf n}_{x} \nabla _{x}^{T} R(x,y)A\frac{{d{\bf u}(y)}}{{ds_{y}} }\right\}ds_{y}ds_x}\nonumber\\
&-& 2\mu\int_{\Gamma}\int_{\Gamma}\frac{d\ov{\bf v}(x)}{ds_x}\cdot  {\left\{A\nabla_{y} R( x,y){\bf n}_{y}^{T}{\bf u}(y)\right\}ds_{y}ds_x}.
\enn

\section{Galerkin boundary element method}
\label{sec:7}
In this section, we describe the procedure of reducing the Galerkin equation of (\ref{Eq:WeakForm1}), (\ref{Eq:WeakForm2}) and (\ref{Eq:WeakForm3}) to their discrete linear systems of equations.  Consider the direct method for example and let $\mathcal{H}_h$ be a finite dimensional subspace of $\mathcal{H}(\Gamma)$. The Galerkin approximation of (\ref{Eq:WeakForm1}) reads: {\it Given $p^{inc}$ and $\pa p^{inc}/\pa n$, find $({\bf u}_h,p_h)\in\mathcal{H}_h$ satisfying}
\be
A({\bf u}_h,p_h; {\bf v}_h,q_h)  =  F({\bf v}_h,q_h), \quad\forall \,({\bf v}_h,q_h)\in  \mathcal{H}_h.
\label{Eq:Galerkin1}
\en

\begin{theorem}
Suppose that\\
(a) \, the surface $\Gamma$ and the material parameters $(\mu,\lambda,\rho)$ are such that there are no traction free solutions,\\
(b) \, $-k^2$ is not an eigenvalue of the interior Neumann problem for the Laplacian, \\
(c) \, $\mathcal{H}_h$ is a standard boundary element space satisfying the approximation
property. \\
Then there exists a constant $c>0$ independent of $({\bf u},p)$ and $h$ such that the following estimate holds for $t\le s$
\ben
\|({\bf u},p)-({\bf u}_h,p_h)\|_{(H^t(\Gamma))^2\times H^t(\Gamma)} \le ch^{s-t}\|({\bf u},p)\|_{(H^s(\Gamma))^2\times H^s(\Gamma)}.
\enn
\end{theorem}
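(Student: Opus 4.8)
The plan is to derive the estimate from the abstract convergence theory of Galerkin methods for strongly elliptic boundary integral operators, in the spirit of \cite{HW08}. Let $\mathcal{A}:\mathcal{H}(\Gamma)\to\mathcal{H}(\Gamma)^{*}$ denote the bounded linear operator generated by the sesquilinear form $A(\cdot\,;\,\cdot)$ of \eqref{Eq:SesquiForm1}, so that \eqref{Eq:WeakForm1} is equivalent to $\mathcal{A}({\bf u},p)=(f_{1},f_{2})$. By Theorem \ref{Theorem3.1} the form satisfies a G\aa rding inequality on $\mathcal{H}(\Gamma)=(H^{1/2}(\Gamma))^{2}\times H^{1/2}(\Gamma)$, hence $\mathcal{A}=\mathcal{A}_{0}+\mathcal{C}$ with $\mathcal{A}_{0}$ coercive and $\mathcal{C}:\mathcal{H}(\Gamma)\to\mathcal{H}(\Gamma)^{*}$ compact (the lower--order terms factor through the compact embedding $H^{1/2}\hookrightarrow H^{1/2-\epsilon}$). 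By Theorem \ref{Theorem3.3}, $\mathcal{A}$ is bijective, so it and its adjoint $\mathcal{A}^{*}$ are isomorphisms.

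First I would record the energy--norm estimate. The discrete counterpart of the G\aa rding inequality, together with the approximation property (c) used to absorb the compact part $\mathcal{C}$, yields an $h_{0}>0$ such that for every $h\le h_{0}$ the Galerkin problem \eqref{Eq:Galerkin1} has a unique solution $({\bf u}_{h},p_{h})$ and the discrete inf--sup constants are bounded below uniformly in $h$. C\'ea's lemma in this quasi--coercive setting then gives
\[
\|({\bf u},p)-({\bf u}_{h},p_{h})\|_{\mathcal{H}(\Gamma)}\ \le\ C\inf_{({\bf v}_{h},q_{h})\in\mathcal{H}_{h}}\|({\bf u},p)-({\bf v}_{h},q_{h})\|_{\mathcal{H}(\Gamma)},
\]
and the approximation property bounds the infimum by $Ch^{s-1/2}\|({\bf u},p)\|_{(H^{s}(\Gamma))^{2}\times H^{s}(\Gamma)}$; this is the claimed estimate for $t=1/2$.

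Next I would handle $t<1/2$ by the Aubin--Nitsche duality argument: given $g$ in the dual of $(H^{t}(\Gamma))^{2}\times H^{t}(\Gamma)$, solve the adjoint problem $A({\bf v},q;{\bf z},\zeta)=\langle({\bf v},q),g\rangle$, which is well posed because $\mathcal{A}^{*}$ is an isomorphism, and whose solution lies in $(H^{1-t}(\Gamma))^{2}\times H^{1-t}(\Gamma)$ with norm controlled by $\|g\|$, by the regularity (shift) theorem valid on the smooth curve $\Gamma$. Using the Galerkin orthogonality of the error, approximating $({\bf z},\zeta)$ in $\mathcal{H}_{h}$, and invoking the energy estimate a second time then yields the $H^{t}$--bound for $t\le 1/2$. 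Finally, for $1/2<t\le s$ I would apply the inverse inequality on the quasi--uniform space $\mathcal{H}_{h}$ to $({\bf u}_{h},p_{h})-\Pi_{h}({\bf u},p)$ (with $\Pi_{h}$ the approximation operator of (c)), and combine it with the triangle inequality, the energy estimate and the approximation property to obtain
\[
\|({\bf u},p)-({\bf u}_{h},p_{h})\|_{(H^{t}(\Gamma))^{2}\times H^{t}(\Gamma)}\ \le\ Ch^{s-t}\|({\bf u},p)\|_{(H^{s}(\Gamma))^{2}\times H^{s}(\Gamma)}
\]
throughout the range $t\le s$.

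The main obstacle is not a single computation but verifying that the hypotheses of this machinery are genuinely satisfied: that (c) supplies \emph{both} the approximation property needed to absorb $\mathcal{C}$ in the discrete G\aa rding step and (for $t>1/2$) an inverse property, i.e.\ that $\mathcal{H}_{h}$ may be taken quasi--uniform; that the adjoint problem is well posed, which rests on the injectivity of $\mathcal{A}^{*}$ (equivalent to that of $\mathcal{A}$) together with the mapping properties of $W_{s},K_{s}',W_{f},K_{f}'$ on the Sobolev scales recorded in Section \ref{sec:3}; and that $\Gamma$ is smooth enough for the two--order shift theorem used in the Aubin--Nitsche step. Once these points are in place, the stated estimate follows from the quoted general theory.
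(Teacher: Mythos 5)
Your proposal is correct and follows essentially the same route the paper takes: the paper's proof simply invokes the BBL (discrete inf--sup) condition and the general Galerkin error analysis of \cite{HW04}, which is exactly the machinery you reconstruct (G\aa rding inequality plus injectivity giving uniform discrete stability, C\'ea's lemma in the energy norm, Aubin--Nitsche duality for $t<1/2$, and inverse estimates for $t>1/2$). Your remark that the range $t>1/2$ additionally requires an inverse property (quasi-uniformity) of $\mathcal{H}_h$ beyond the stated approximation property is a fair observation, but it is part of what the paper implicitly bundles into ``standard boundary element space.''
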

\begin{proof}
The proof can be completed by introducing the BBL-condition and the analysis of the Galerkin equation given in \cite{HW04}. We omit it here.
\end{proof}

Now we describe briefly a procedure of reducing the Galerkin equation (\ref{Eq:Galerkin1}) to its discrete linear system of equations. Let $x_i, i=1,2,...,N$ be the discretion points on $\G$ and $\G_i$ be the line segment between $x_i$ and $x_{i+1}$. Then the boundary $\G$ is approximated by
\ben
\widetilde{\G}:=\bigcup_{i=1}^N\G_i.
\enn
Let $\{\varphi_i\}, i=1,2,...,N$ be piecewise linear basis functions of $\mathcal{H}_h$. We seek approximation solutions ${\bf U}_h=({\bf u}_h,p_h)$ in the forms
\ben
{\bf u}_h(x)=\sum_{i=1}^N {\bf u}_i\varphi_i(x), \quad p_h(x)=\sum_{i=1}^N p_i\varphi_i(x),
\enn
where ${\bf u}_i\in\C^2$ and $p_i\in\C$ are unknown nodal values of ${\bf u}_h$ and $p_h$ at $x_i$, respectively. The given Cauchy data are interpolated with the form
\ben
p^{inc}(x)=\sum_{i=1}^N p_i^{inc}\varphi_i(x),\quad \nabla p^{inc}=\sum_{i=1}^N {\bf g}_i^{inc}\varphi_i(x),
\enn
where $p_i^{inc}\in\C$ and ${\bf g}_i^{inc}\in\C^2$ are function values of $p^{inc}$ and $\nabla p^{inc}$ at interpolation points. Then Substituting these interpolation forms into (\ref{Eq:Galerkin1}) and setting $\varphi_i, i=1,2,...,N$ as test functions, we arrive at the linear system of equations
\be
\label{reducedsys}
{\bf A}_h\,{\bf X}={\bf F}_h,
\en
where
\ben
{\bf A}_h &=&
\begin{bmatrix}
     {\bf W}_{sh} & \left(\frac{1}{2}{\bf I}_h-{\bf K}_{sph}\right) \\
     \eta\left(\frac{1}{2}{\bf I}_h^\top+{\bf K}_{fph}\right) & {\bf W}_{fh}
\end{bmatrix},\\
{\bf X}&=& \begin{bmatrix}
     {\bf X}_1 \\
     {\bf X}_2
\end{bmatrix},\\
{\bf F}_h&=&\begin{bmatrix}
     \left(-\frac{1}{2}{\bf I}_h+{\bf K}_{sph}\right){\bf b}_1 \\
     \left(\frac{1}{2}{\bf I}_h^\top+{\bf K}_{fph}\right){\bf b}_2
     \end{bmatrix},
\enn
and
\ben
{\bf X}_1 &=& ({\bf u}_1^\top,{\bf u}_2^\top,...,{\bf u}_N^\top)^\top, \\
{\bf X}_2 &=& (p_1,p_2,...,p_N)^\top, \\
{\bf b}_1 &=& (p_1^{inc},p_2^{inc},...,p_N^{inc})^\top, \\
{\bf b}_2 &=& ({\bf g}_1^{inc},{\bf g}_2^{inc},...,{\bf g}_N^{inc})^\top.
\enn
The stiffness matrix ${\bf A}_h$ consists of block matrices with corresponding entries defined by
\ben
{\bf W}_{sh}(i,j) &=& \int_{\widetilde{\G}} (W_s\varphi_j)\varphi_i\,ds,\\
{\bf W}_{fh}(i,j) &=& \int_{\widetilde{\G}} (W_f\varphi_j)\varphi_i\,ds,\\
{\bf K}_{sph}(i,j) &=& \int_{\widetilde{\G}} (K_s'(\varphi_j{\bf n}))\varphi_i\,ds,\\
{\bf K}_{fph}(i,j) &=& \int_{\widetilde{\G}} (K_f'(\varphi_j{\bf n}^\top))\varphi_i\,ds,\\
{\bf I}_h(i,j) &=& \int_{\widetilde{\G}} \varphi_j{\bf n}\varphi_i\,ds.
\enn
For the implementation of the stiffness matrix ${\bf A}_h$, we refer to the numerical strategy described in \cite{BXY} for solving exterior elastic scattering problem. The computational formulations are omitted here. We denote ${\bf B}_h$ and ${\bf C}_h$ the corresponding stiffness matrix for the indirect method and Burton-Miller formulation.

\section{Numerical experiments}
\label{sec:8}

In this section, we present two numerical tests to demonstrate efficiency and accuracy of the presented systems of BIEs, the regularization formulation and the numerical scheme  for solving the fluid-solid interaction problem. Numerical simulations  are performed under the system of Matlab software using a direct solver for corresponding  linear systems.

We first introduce a model problem for which analytical solutions are available for the evaluation of accuracy. We consider the scattering of a plane incident wave $p^{inc}=e^{ikx\cdot d}$ with direction $d=(1,0)$ by a disc-shaped elastic body of radius $R_0$, and thus we could write  the solution of \eqref{Eq:Navier}--\eqref{Eq:RadiationCond} in the forms
\ben
p(r,\theta) &=& \sum\limits_{n = 0}^\infty  {{A_n}H_n^{(1)}(kr)\cos (n\theta )},\\
{\bf u} &=& \nabla \varphi  -  \nabla\times \psi
\enn
with
\ben
\varphi(r,\theta) &=& \sum\limits_{n = 0}^\infty  {{B_n}J_n(k_pr)\cos (n\theta )},\\
\psi(r,\theta) &=& \sum\limits_{n = 0}^\infty  {{C_n}J_n(k_sr)\sin (n\theta )},
\enn
where the coefficients $A_n$, $B_n$ and $C_n$ are to be determined. According to the transmission conditions \eqref{Eq:TransCond1}--\eqref{Eq:TransCond2}, we are able to obtain a linear system of equations as
\ben
{\bf E}_n {{\bf X}_n} = {\bf e}_n ,
\enn
where ${\bf X}_n=(A_n,C_n,D_n)^\top$, the system matrix ${\bf E}_n=\left[ {E_n^{ij}} \right]$ and the right-hand vector ${\bf e}_n=\left[ {e_n^j} \right]$, $ i,j=1,2,3. $ Their elements (identified by the super-script)  are computed using the following formulations

\ben
E_n^{11} &=& -H_{n-1}^{(1)}(kR_0)+\frac{n}{kR_0}H_{n}^{(1)}(kR_0),\\
E_n^{12} &=& \frac{\rho_f\omega^2k_p}{k}\left[J_{n-1}(k_pR_0)-\frac{n}{k_pR_0}J_{n}(k_pR_0)\right],\\
E_n^{13} &=& \frac{\rho_f\omega^2n}{kR_0}J_{n}(k_sR_0),\\
E_n^{21} &=& 0,\\
E_n^{22} &=& \frac{2\mu nk_p}{R_0}J_{n-1}(k_pR_0)-\frac{2\mu (n^2+n)}{R_0^2}J_{n}(k_pR_0),\\
E_n^{23} &=& \frac{2\mu (n^2+n)-\mu k_s^2R_0^2}{R_0^2}J_{n}(k_sR_0)-\frac{2\mu k_s}{R_0}J_{n-1}(k_sR_0),\\
E_n^{31} &=& H_{n}^{(1)}(kR_0),\\
E_n^{32} &=& \frac{2\mu (n^2+n)-\mu k_s^2R_0^2}{R_0^2}J_{n}(k_pR_0)-\frac{2\mu k_p}{R_0}J_{n-1}(k_pR_0),\\
E_n^{33} &=& \frac{2\mu nk_s}{R_0}J_{n-1}(k_sR_0)-\frac{2\mu (n^2+n)}{R_0^2}J_{n}(k_sR_0),
\enn
and
\ben
e_n^{1} &=& \epsilon_ni^n\left[J_{n-1}(kR_0)-\frac{n}{kR_0}J_{n}(kR_0) \right],\\
e_n^{2} &=& 0,\\
e_n^{1} &=& -\epsilon_ni^nJ_{n}(kR_0).
\enn
For this model problem, the Jones frequencies can be determined by the zeros of $|\mbox{det}\,{\bf E}_n|$. In addition, the eigenvalues of the interior Neumann problem for the Laplacian are related with the zeros of
\ben
J_n'(kR_0)=-J_{n+1}(kR_0)+\frac{n}{kR_0}J_n(kR_0),\quad n\in\Z.
\enn

{\bf Example 1.} In this example, we test the accuracy of proposed  numerical schemes for solving the two dimensional fluid-solid interaction problem. We consider the above model problem with $R_0=0.01\,\mbox{m}$, and $c_s=3122\,\mbox{m/s}$, $c_p=6198\,\mbox{m/s}$. Here, $c_s$ ans $c_p$ are the wave speeds of shear wave and pressure wave in the solid defined by
\ben
c_s=\sqrt{\frac{\mu}{\rho}},\quad c_p=\sqrt{\frac{\lambda+2\mu}{\rho}}.
\enn
The speed of sound in water is $c=1500\,\mbox{m/s}$, and the density of water is $\rho_f=1000\,\mbox{kg/m}^3$, respectively. The density of aluminum $\rho=2700\,\mbox{kg/m}^3$,  and the frequency $\omega=50\pi\,\mbox{kHz}$. We apply the direct method and Burton-Miller formulation to obtain the numerical solutions $({\bf u}_h,p_h)$ on $\Gamma$ and present the results for $N=64$ in Fig. \ref{Ex1-1} and \ref{Ex1-2}, respectively. It can be seen that the numerical solutions are in a perfect agreement with the exact ones. We also list the numerical errors of two methods in Table \ref{TableExp1-1} and \ref{TableExp1-2}, respectively and these results verify the optimal convergence order
\ben
\|{\bf U}-{\bf U}_h\|_{(L^2(\Gamma))^2\times L^2(\Gamma)}=O(1/N).
\enn
Next, we consider the indirect method by which we first compute the numerical solutions $({\bf v}_h,\psi_h)$ on $\Gamma$, then use the representations \eqref{Eq:IndirectBRF1}--\eqref{Eq:IndirectBRF2} to calculate the numerical solutions ${\bf u}_h$ on $\Gamma_{R_0/2}$ and $p_h$ on $\Gamma_{2R_0}$, where
\ben
\Gamma_r:=\{x\in\R^2: |x|=r\}.
\enn
The exact and numerical solutions are presented in Fig. \ref{Ex1-3} by choosing $N=1024$, showing the perfect agreement with each other. Corresponding numerical errors are listed in Table \ref{TableExp1-3}, verifying the achievement of the optimal order of accuracy.

\begin{figure}[htbp]
\centering
\begin{tabular}{ccc}
\includegraphics[scale=0.3]{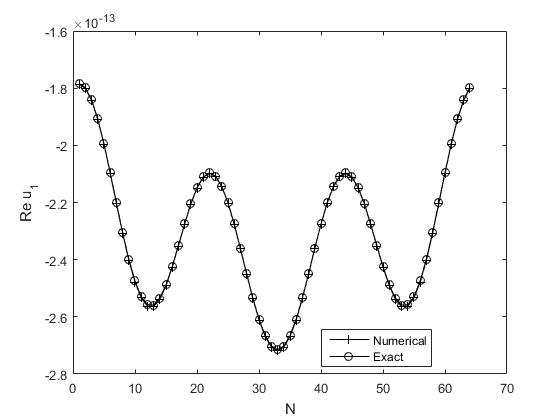} &
\includegraphics[scale=0.3]{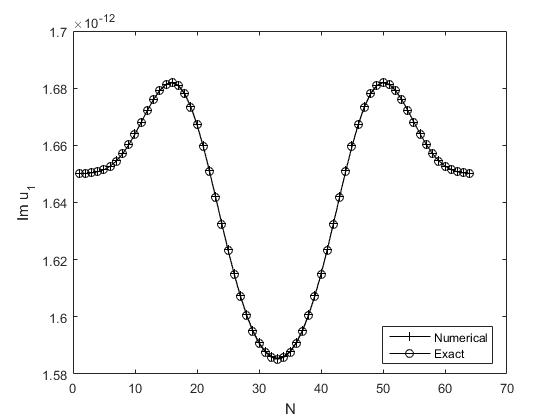} \\
(a) $\mbox{Re}\,u_1$ & (b) $\mbox{Im}\,u_1$ \\
\includegraphics[scale=0.3]{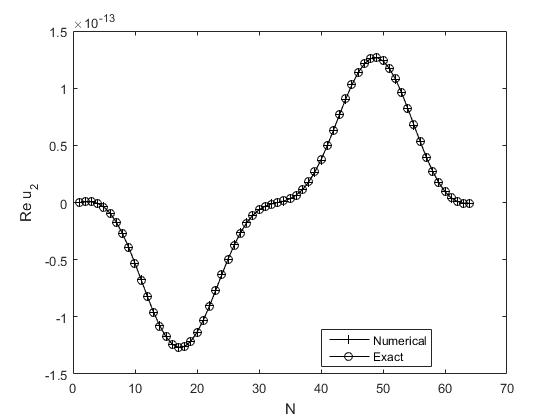} &
\includegraphics[scale=0.3]{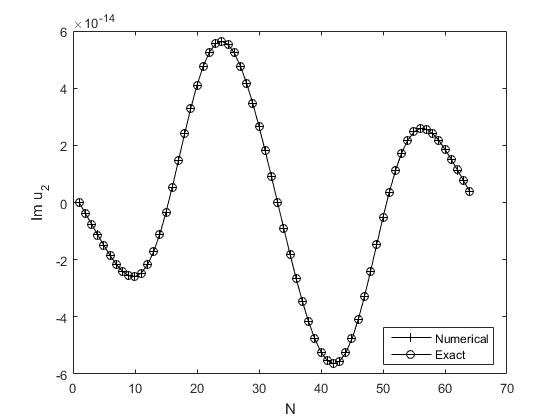} \\
(c) $\mbox{Re}\,u_2$ & (d) $\mbox{Im}\,u_2$ \\
\includegraphics[scale=0.3]{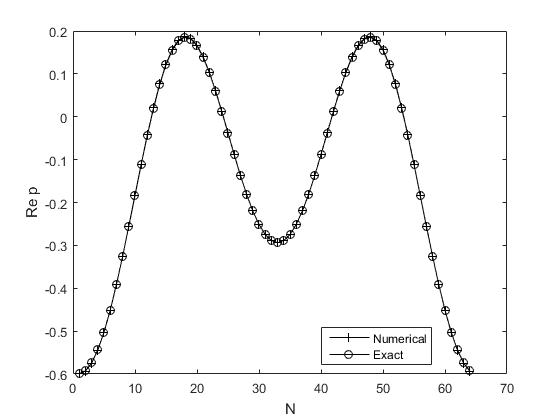} &
\includegraphics[scale=0.3]{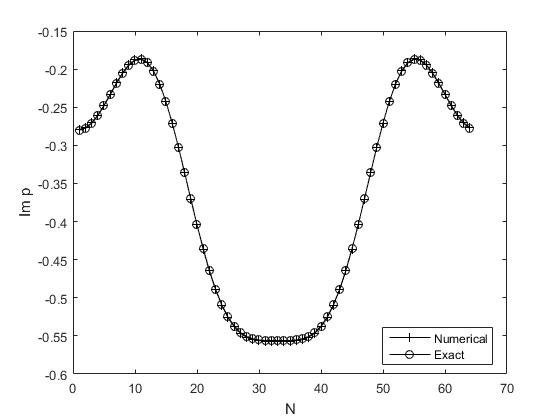} \\
(e) $\mbox{Re}\,p$ & (f) $\mbox{Im}\,p$ \\
\end{tabular}
\caption{Numerical solutions $({\bf u}_h,p_h)$ on $\Gamma$ of the direct method and corresponding exact solutions for Example 1 with $N=64$.}
\label{Ex1-1}
\end{figure}

\begin{table}[htbp]
\caption{Numerical errors of direct method in $L^2$-norm with respect to $N$ for Example 1.}
\centering
\begin{tabular}{|c|c|c|c|c|}\hline
$N$ & $\|{\bf u}-{\bf u}_h\|_{(L^2(\G))^2}$ & Order & $\|p-p_h\|_{L^2(\G)}$ & Order \\
\hline
 64   & 1.35E-16 & --    & 3.10E-4 & --    \\
 128  & 4.66E-17 & 1.53  & 1.24E-4 & 1.32  \\
 256  & 2.35E-17 & 0.99  & 5.95E-5 & 1.06  \\
 512  & 1.24E-17 & 0.92  & 3.01E-5 & 0.98  \\
1024  & 6.24E-18 & 0.99  & 1.53E-5 & 0.98  \\
\hline
\end{tabular}
\label{TableExp1-1}
\end{table}

\begin{figure}[htbp]
\centering
\begin{tabular}{ccc}
\includegraphics[scale=0.3]{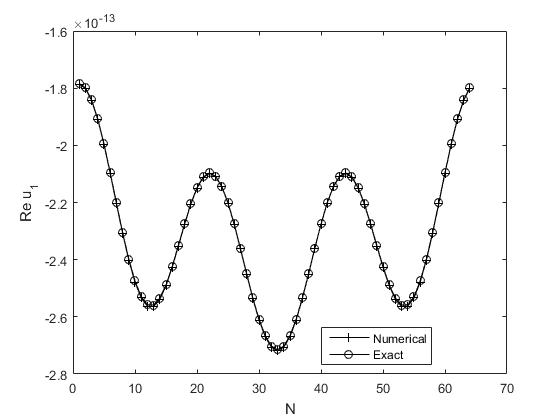} &
\includegraphics[scale=0.3]{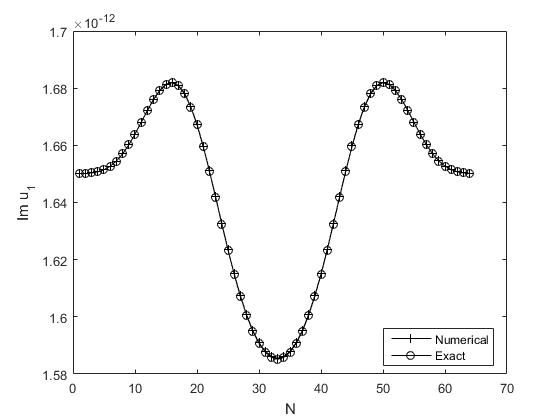} \\
(a) $\mbox{Re}\,u_1$ & (b) $\mbox{Im}\,u_1$ \\
\includegraphics[scale=0.3]{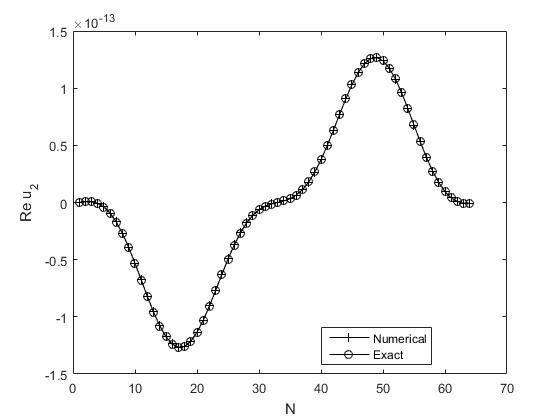} &
\includegraphics[scale=0.3]{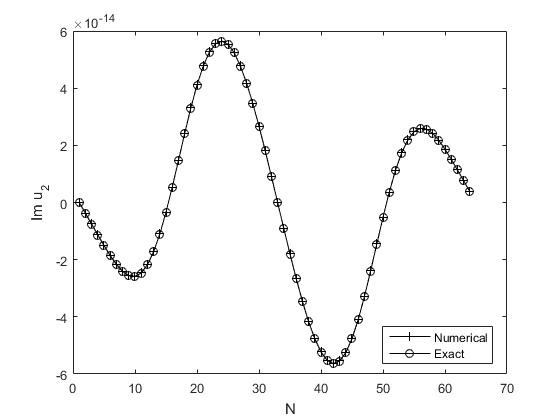} \\
(c) $\mbox{Re}\,u_2$ & (d) $\mbox{Im}\,u_2$ \\
\includegraphics[scale=0.3]{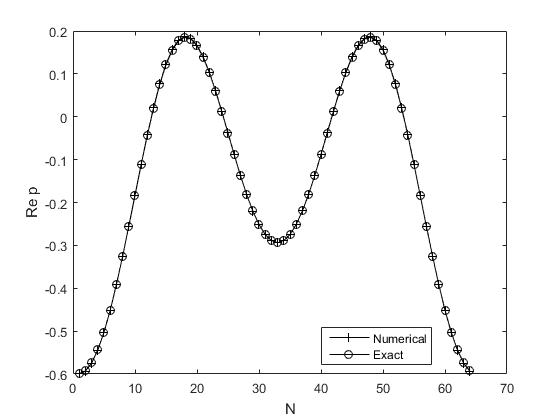} &
\includegraphics[scale=0.3]{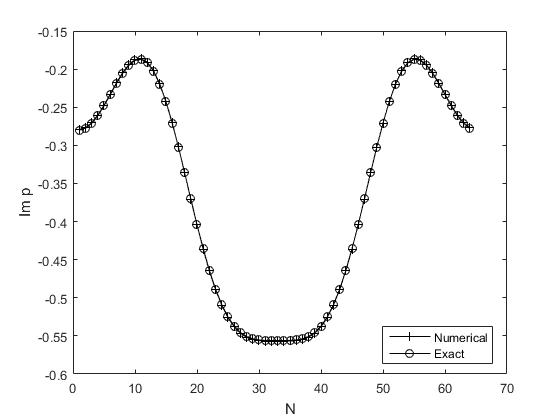} \\
(e) $\mbox{Re}\,p$ & (f) $\mbox{Im}\,p$ \\
\end{tabular}
\caption{Numerical solutions $({\bf u}_h,p_h)$ on $\Gamma$ of the Burton-Miller formulation and corresponding exact solutions for Example 1 with $N=64$.}
\label{Ex1-2}
\end{figure}

\begin{table}[htbp]
\caption{Numerical errors of Burton-Miller formulation in $L^2$-norm with respect to $N$ for Example 1.}
\centering
\begin{tabular}{|c|c|c|c|c|}\hline
$N$ & $\|{\bf u}-{\bf u}_h\|_{(L^2(\G))^2}$ & Order & $\|p-p_h\|_{L^2(\G)}$ & Order \\
\hline
 64   & 1.37E-16 & --    & 3.03E-4 & --    \\
 128  & 5.07E-17 & 1.43  & 1.20E-4 & 1.34  \\
 256  & 2.62E-17 & 0.95  & 5.79E-5 & 1.05  \\
 512  & 1.37E-17 & 0.94  & 2.93E-5 & 0.98  \\
1024  & 6.91E-18 & 0.99  & 1.49E-5 & 0.98  \\
\hline
\end{tabular}
\label{TableExp1-2}
\end{table}

\begin{figure}[htbp]
\centering
\begin{tabular}{ccc}
\includegraphics[scale=0.3]{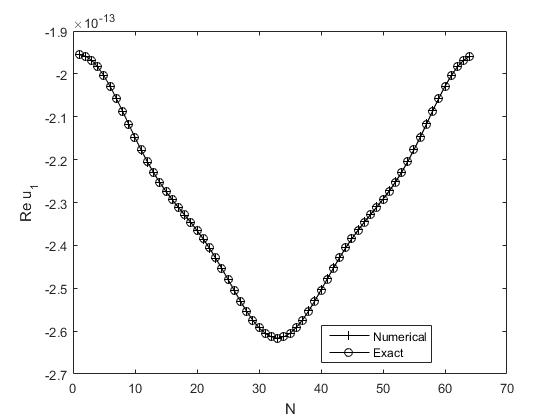} &
\includegraphics[scale=0.3]{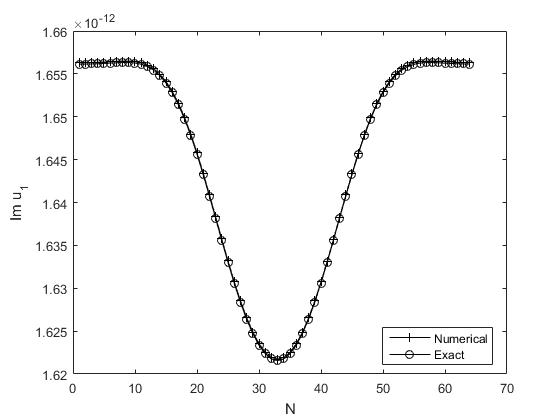} \\
(a) $\mbox{Re}\,u_1$ & (b) $\mbox{Im}\,u_1$ \\
\includegraphics[scale=0.3]{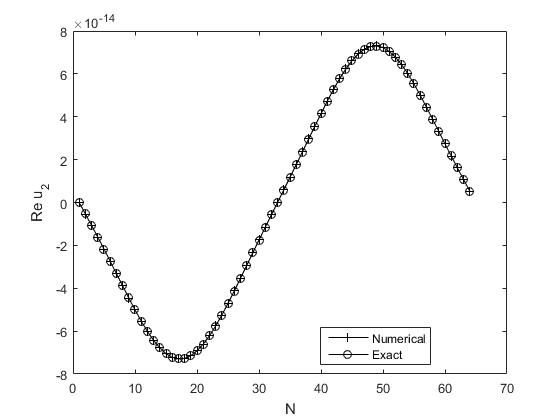} &
\includegraphics[scale=0.3]{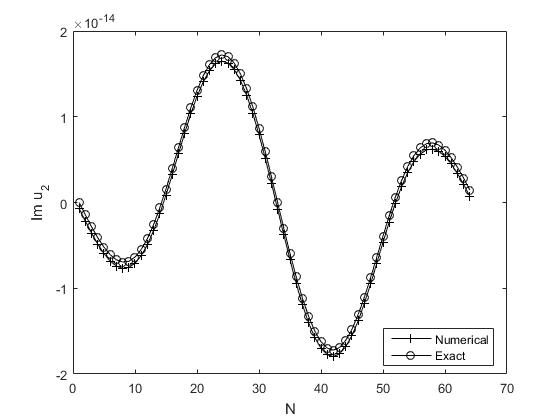} \\
(c) $\mbox{Re}\,u_2$ & (d) $\mbox{Im}\,u_2$ \\
\includegraphics[scale=0.3]{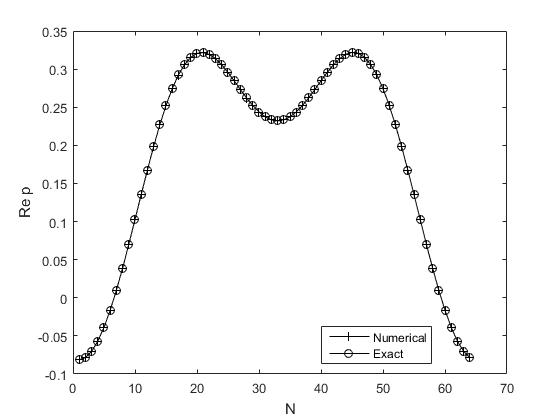} &
\includegraphics[scale=0.3]{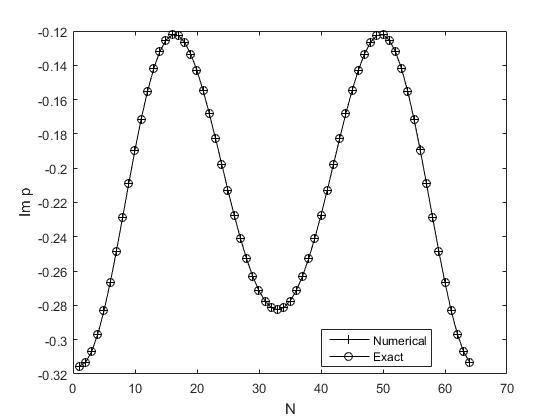} \\
(e) $\mbox{Re}\,p$ & (f) $\mbox{Im}\,p$ \\
\end{tabular}
\caption{Numerical solutions ${\bf u}_h$ on $\Gamma_{R_0/2}$ and $p_h$ on $\Gamma_{2R_0}$ of the indirect method, and corresponding exact solutions for Example 1.}
\label{Ex1-3}
\end{figure}

\begin{table}
\caption{Numerical errors of indirect method in $L^\infty$-norm with respect to $N$ for Example 1.}
\centering
\begin{tabular}{|c|c|c|c|c|}\hline
$N$ & $\|{\bf u}-{\bf u}_h\|_{(L^\infty(\G_{R_0/2}))^2}$ & Order & $\|p-p_h\|_{L^\infty(\G_{2R_0})}$ & Order \\
\hline
 64   & 1.24E-14 & --    & 2.11E-3 & --    \\
 128  & 6.22E-15 & 1.00  & 9.87E-4 & 1.10  \\
 256  & 3.12E-15 & 1.00  & 4.80E-4 & 1.04  \\
 512  & 1.56E-15 & 1.00  & 2.37E-4 & 1.02  \\
1024  & 7.82E-16 & 1.00  & 1.17E-4 & 1.02  \\
\hline
\end{tabular}
\label{TableExp1-3}
\end{table}

{\bf Example 2.} In this example, we demonstrate the occurrence of irregular frequencies  of proposed three systems of BIEs for solving the fluid-solid interaction problem.   We consider the above model problem and choose $\lambda=1$, $\mu=2$, $\rho=1$, $\rho_f=1/2$, $R_0=1$ and $k=\omega$. For $\omega\in[5,10]$, we conclude from the values of $|\mbox{det}\,{\bf E}_n|$ that
\ben
\omega=7.2629
\enn
is the only Jones frequency. Correspondingly, $-k^2$ is an eigenvalue of the interior Neumann problem for the Laplacian when
\ben
\omega &=& 5.3175,\,5.3314,\,6.4156,\,6.7061,\,7.0156,\,7.5013,\\
&\quad& 8.0152,\,8.5363,\,8.5778,\,9.2824,\, 9.6474,\,9.9695.
\enn
Log-log plots of the values $|\mbox{det}\,{\bf A}_h|$, $|\mbox{det}\,{\bf B}_h|$ and $|\mbox{det}\,{\bf C}_h|$ with respect to $\omega$ are presented in Fig. \ref{Ex2-1}, \ref{Ex2-2} and \ref{Ex2-3}, respectively. We can see that the major dips appearing  in these figures are consistent with the theoretical results. For the specified values of $\omega$ denoted using black square in these figures, it can be found that they are the minimum points of $|\mbox{det}\,{\bf E}_n|$.

\begin{figure}[htbp]
\centering
\includegraphics[scale=0.5]{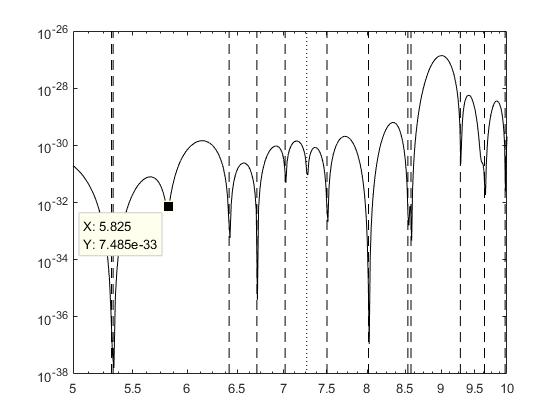}
\caption{The solid line: log-log plot of values $|\mbox{det}\,{\bf A}_h|$ vs. the frequency $\omega$; the vertical dashed line: Neumann eigenvalue; the vertical dotted line: Jones frequency.}
\label{Ex2-1}
\end{figure}

\begin{figure}[htbp]
\centering
\includegraphics[scale=0.5]{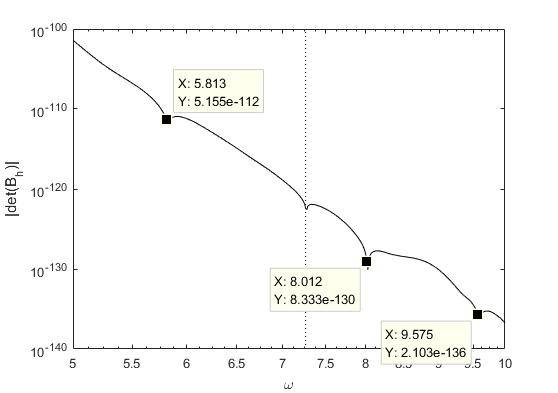}
\caption{The solid line: log-log plot of values $|\mbox{det}\,{\bf B}_h|$ vs. the frequency $\omega$; the vertical dotted line: Jones frequency.}
\label{Ex2-2}
\end{figure}

\begin{figure}[htbp]
\centering
\includegraphics[scale=0.5]{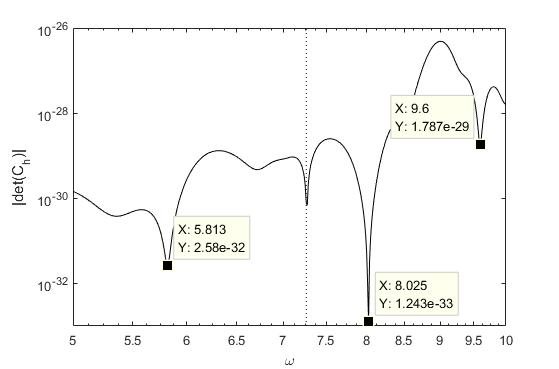}
\caption{The solid line: log-log plot of values $|\mbox{det}\,{\bf C}_h|$ vs. the frequency $\omega$;  the vertical dotted line: Jones frequency.}
\label{Ex2-3}
\end{figure}

\section{Conclusion}
\label{sec:9}
In this paper, through direct and indirect methods, three systems of boundary integral equations are presented for the solution of the two dimensional fluid-solid interaction problems. Uniqueness and existence results have been established for the corresponding variational formulations. These systems can be extended for solving the three-dimensional problem without significant difficulties. A new regularization formulation for the computation of the hyper-singular boundary integral operator associated with the time-harmonic Navier equation in the elastic domain has been derived. Numerical results are presented to validate the regularization formula and the numerical scheme. Applications  of these formulations to inverse problems and eigenvalue problems, and investigations on the preconditioning technique for these systems will envision our future work.
\appendix
\renewcommand\theequation{}
\section{Proof of Theorem \ref{th:main}}
\renewcommand{\theequation}{A.\arabic{equation}}
{For interested  readers, we give the derivations of the regularization formulas \eqref{Eq:Ws.1} and \eqref{Eq:Ws.12} presented in Theorem \ref{th:main} in this appendix.
First we need the following alternative representation of the traction operator}
\begin{lemma} \label{lm:A.1}
The traction operator can be rewritten as
\be \label{Eq:TracationOper1}
{\bf T}_{x}{\bf u} (x)= ( \lambda + \mu){\bf n}_x( \nabla
_{x} \cdot {\bf u}) + \mu \frac{{\partial {\bf u}}}{{\partial n_{x}} } + \mu {\bf M}(
\partial _{x} ,{\bf n}_{x}){\bf u}
\en
where the operator ${\bf M}(\partial _{x} ,{\bf n}_{x})$ is defined by
\ben
{\bf M}(\partial _{x} ,{\bf n}_{x}){\bf u}(x)= \frac{{\partial {\bf u}}}{{\partial
n}_x} + {\bf n}_x \times \nabla\times{\bf u} - {\bf n}_x(\nabla _{x} \cdot {\bf u}).
\enn
Moreover,
\be
{\bf M}( \partial _{x} ,{\bf n}_{x}){\bf u}(x) = A\frac{{d{\bf u}(x)}}{{ds_{x}} }.\label{Eq:M2}
\en
Here, the elements in ${\bf M}(\partial _{x} ,{\bf n}_{x})$ are also called the
{\it G\"unter derivatives} {{\em(\cite{KGBB79})}}.
\end{lemma}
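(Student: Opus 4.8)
The plan is to obtain \eqref{Eq:TracationOper1} by an elementary algebraic regrouping of the definition of the traction operator, and then to prove \eqref{Eq:M2} by specializing to two dimensions and computing ${\bf M}(\partial_x,{\bf n}_x){\bf u}$ component by component.

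First I would start from the defining formula ${\bf T}{\bf u} = \lambda(\divv{\bf u}){\bf n}_x + 2\mu\,\partial{\bf u}/\partial n_x + \mu\,{\bf n}_x\times\curl{\bf u}$ introduced in Section \ref{sec:2}. Splitting the first coefficient as $\lambda = (\lambda+\mu) - \mu$ and the second as $2\mu = \mu + \mu$, one regroups
\[
{\bf T}_x{\bf u} = (\lambda+\mu){\bf n}_x(\nabla_x\cdot{\bf u}) + \mu\frac{\partial{\bf u}}{\partial n_x} + \mu\Bigl(\frac{\partial{\bf u}}{\partial n_x} + {\bf n}_x\times\curl{\bf u} - {\bf n}_x(\nabla_x\cdot{\bf u})\Bigr),
\]
and the parenthesized term is by definition ${\bf M}(\partial_x,{\bf n}_x){\bf u}$, which already yields \eqref{Eq:TracationOper1}. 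Note that this step does not use the dimension; it remains only to identify ${\bf M}$ with a tangential (G\"unter) derivative in the planar case.

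For \eqref{Eq:M2} I would use the two-dimensional conventions fixed at the beginning of Section \ref{sec:6}: $\curl{\bf u} = \partial_1 u_2 - \partial_2 u_1$ is scalar, ${\bf n}_x\times\curl{\bf u}$ is read as $(\partial_1 u_2 - \partial_2 u_1)(n_x^2,-n_x^1)^\top$, the normal derivative is $(n_x^1\partial_1 + n_x^2\partial_2){\bf u}$, and the arc-length derivative along ${\bf t}_x=(-n_x^2,n_x^1)^\top$ is $d{\bf u}/ds_x = (-n_x^2\partial_1 + n_x^1\partial_2){\bf u}$. Substituting these into ${\bf M}(\partial_x,{\bf n}_x){\bf u} = \partial{\bf u}/\partial n_x + {\bf n}_x\times\curl{\bf u} - {\bf n}_x(\nabla_x\cdot{\bf u})$ and simplifying each component (several terms cancel pairwise), one is left with
\[
{\bf M}(\partial_x,{\bf n}_x){\bf u} = \begin{pmatrix} -(n_x^1\partial_2-n_x^2\partial_1)u_2 \\ (n_x^1\partial_2-n_x^2\partial_1)u_1 \end{pmatrix} = \begin{pmatrix} -\,du_2/ds_x \\ du_1/ds_x \end{pmatrix} = A\,\frac{d{\bf u}}{ds_x},
\]
which is exactly \eqref{Eq:M2}.

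There is no conceptual difficulty here; the only point demanding care is the sign bookkeeping in the planar reductions of $\curl$ and of the cross product ${\bf n}_x\times(\cdot)$, which must be consistent with the orientation convention ${\bf t}_x=(-n_x^2,n_x^1)^\top$ so that $A=\left[\begin{smallmatrix}0&-1\\1&0\end{smallmatrix}\right]$ (equivalently, $A$ is the counterclockwise rotation by $\pi/2$) comes out with the stated sign. The identity is the two-dimensional analogue of the classical G\"unter-derivative decomposition, so after the verification above I would simply refer to \cite{KGBB79,HW08} for its higher-dimensional counterpart.
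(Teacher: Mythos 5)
Your proof is correct. Note that the paper itself states Lemma \ref{lm:A.1} without proof (it only cites \cite{KGBB79}, and the analogous scalar identity in Lemma \ref{lm:6.1} is likewise referred to \cite{HW08}), so your elementary verification --- regrouping $\lambda=(\lambda+\mu)-\mu$, $2\mu=\mu+\mu$ for \eqref{Eq:TracationOper1}, then the componentwise cancellation giving ${\bf M}(\partial_x,{\bf n}_x){\bf u}=(-du_2/ds_x,\,du_1/ds_x)^\top=A\,d{\bf u}/ds_x$ --- is exactly the standard argument the authors are implicitly invoking, and your sign conventions for the planar $\curl$, the cross product, and ${\bf t}_x=(-n_x^2,n_x^1)^\top$ are consistent with the paper's definition of ${\bf T}$ and of $A$.
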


{We also need following preliminary results concerning  applications of  {\it G\"unter derivatives}  to various expressions. }
\begin{theorem} {\label{th:A.2}}
For the fundamental displacement tensor ${\bf E}(x,y)$ defined by \eqref{Eq:NavierFS}, we have
\be \label{Eq:TxExy}
{\bf T}_{x}{\bf E}(x,y) &=& - {\bf n}_{x} \nabla _{x}^{T} R(x,y) + \frac{{\partial \gamma _{k_{s}}(x,y) } }{{\partial n_{x}} }{\bf I}\nonumber\\
&+& {\bf M}( \partial _{x} ,{\bf n}_{x})\left[2\mu {\bf E}(x,y) - \gamma
_{k_{s}}(x,y){\bf I}\right],
\en
where $R(x,y)=\gamma _{k_{s}}(x,y)-\gamma _{k_{p}}(x,y)$.
\end{theorem}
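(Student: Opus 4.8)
The plan is to start from the alternative form of the traction operator in Lemma \ref{lm:A.1}, namely ${\bf T}_{x}{\bf u} = (\lambda+\mu){\bf n}_{x}(\nabla_{x}\cdot{\bf u}) + \mu\,\partial{\bf u}/\partial n_{x} + \mu\,{\bf M}(\partial_{x},{\bf n}_{x}){\bf u}$, apply it column by column to ${\bf E}(x,y)$, and then simplify the three resulting contributions using nothing more than the representation ${\bf E}(x,y) = \frac{1}{\mu}\gamma_{k_{s}}(x,y){\bf I} + \frac{1}{\rho\omega^{2}}\nabla_{x}\nabla_{x}^{T}R(x,y)$, the scalar Helmholtz equations $\Delta_{x}\gamma_{k}(x,y) = -k^{2}\gamma_{k}(x,y)$ (valid pointwise for $x\ne y$), and the wave-number identities $1/\mu = k_{s}^{2}/(\rho\omega^{2})$ and $1/(\lambda+2\mu) = k_{p}^{2}/(\rho\omega^{2})$. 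Everything stays classical because $x\ne y$; the computation is the two-dimensional analogue of the corresponding three-dimensional identity in \cite{HW08}.

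First I would dispose of the divergence term. Applying $\nabla_{x}\cdot$ to the $j$-th column of ${\bf E}(x,y)$ and inserting $\Delta_{x}R = -k_{s}^{2}\gamma_{k_{s}} + k_{p}^{2}\gamma_{k_{p}}$ together with the wave-number identities, the $\gamma_{k_{s}}$ contributions cancel identically and one is left with the row vector $\nabla_{x}\cdot{\bf E}(x,y) = \frac{1}{\lambda+2\mu}\nabla_{x}^{T}\gamma_{k_{p}}(x,y)$, hence $(\lambda+\mu){\bf n}_{x}(\nabla_{x}\cdot{\bf E}) = \frac{\lambda+\mu}{\lambda+2\mu}{\bf n}_{x}\nabla_{x}^{T}\gamma_{k_{p}}$.

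Next I would split $\mu\,\partial{\bf E}/\partial n_{x}$ and $\mu\,{\bf M}(\partial_{x},{\bf n}_{x}){\bf E}$ according to the decomposition of ${\bf E}$. The term $\mu\,\partial_{n_{x}}(\frac{1}{\mu}\gamma_{k_{s}}{\bf I})$ contributes exactly $(\partial\gamma_{k_{s}}/\partial n_{x}){\bf I}$, while on the target side one has the algebraic identity $2\mu{\bf E} - \gamma_{k_{s}}{\bf I} = \mu{\bf E} + \frac{\mu}{\rho\omega^{2}}\nabla_{x}\nabla_{x}^{T}R$, so that ${\bf M}(\partial_{x},{\bf n}_{x})[2\mu{\bf E} - \gamma_{k_{s}}{\bf I}] = \mu\,{\bf M}(\partial_{x},{\bf n}_{x}){\bf E} + \frac{\mu}{\rho\omega^{2}}{\bf M}(\partial_{x},{\bf n}_{x})(\nabla_{x}\nabla_{x}^{T}R)$. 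After these cancellations the whole assertion reduces to
\[
(\lambda+\mu){\bf n}_{x}(\nabla_{x}\cdot{\bf E}) + \frac{\mu}{\rho\omega^{2}}\frac{\partial}{\partial n_{x}}\bigl(\nabla_{x}\nabla_{x}^{T}R\bigr) = -{\bf n}_{x}\nabla_{x}^{T}R + \frac{\mu}{\rho\omega^{2}}{\bf M}(\partial_{x},{\bf n}_{x})\bigl(\nabla_{x}\nabla_{x}^{T}R\bigr).
\]
The crucial observation now is that every column of the Hessian $\nabla_{x}\nabla_{x}^{T}R$ is a gradient and therefore curl-free, so the defining relation ${\bf M}(\partial_{x},{\bf n}_{x}){\bf w} = \partial{\bf w}/\partial n_{x} + {\bf n}_{x}\times\nabla_{x}\times{\bf w} - {\bf n}_{x}(\nabla_{x}\cdot{\bf w})$ collapses, column by column, to ${\bf M}(\partial_{x},{\bf n}_{x})(\nabla_{x}\nabla_{x}^{T}R) - \partial_{n_{x}}(\nabla_{x}\nabla_{x}^{T}R) = -{\bf n}_{x}\nabla_{x}^{T}(\Delta_{x}R)$. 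Substituting this, the displayed identity becomes $(\lambda+\mu){\bf n}_{x}(\nabla_{x}\cdot{\bf E}) = -{\bf n}_{x}\nabla_{x}^{T}R - \frac{\mu}{\rho\omega^{2}}{\bf n}_{x}\nabla_{x}^{T}(\Delta_{x}R)$, which follows at once from the earlier formula for $\nabla_{x}\cdot{\bf E}$, from $R = \gamma_{k_{s}} - \gamma_{k_{p}}$, and from $\frac{\mu}{\rho\omega^{2}}\Delta_{x}R = -\gamma_{k_{s}} + \frac{\mu}{\lambda+2\mu}\gamma_{k_{p}}$, by a short check of the coefficients of $\nabla_{x}^{T}\gamma_{k_{p}}$.

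The main obstacle is organisational rather than analytic: one must be scrupulous with the matrix/vector bookkeeping — distinguishing column gradients from row gradients, keeping straight which differential operator acts in $x$ and which in $y$, and correctly interpreting the (two-dimensional) curl hidden inside the G\"unter operator ${\bf M}(\partial_{x},{\bf n}_{x})$ as well as its tangential-derivative form $A\,d/ds_{x}$ from Lemma \ref{lm:A.1}. Once the "curl of a gradient vanishes" reduction is in place, no delicate estimate is required, and the same scheme will also serve as the workhorse for the two representations of $W_{s}$ in Theorem \ref{th:main}.
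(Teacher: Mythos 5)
Your proof is correct, and while it rests on the same two pillars as the paper's --- the alternative traction representation of Lemma \ref{lm:A.1} and the scalar Helmholtz equations combined with $\mu k_s^2=\rho\omega^2$, $(\lambda+2\mu)k_p^2=\rho\omega^2$ --- the way you organize the verification is genuinely different. The paper proves \eqref{Eq:TxExy} entry-by-entry: it computes $(\nabla_x\cdot{\bf E})_i$, $(\partial{\bf E}/\partial n_x)_{ij}$ and $({\bf M}(\partial_x,{\bf n}_x){\bf E})_{ij}$ explicitly as sums of third derivatives of $\gamma_{k_s}-\gamma_{k_p}$ (equations \eqref{Eq:DIVEx}--\eqref{Eq:MEx}), then assembles them using the splitting $\mu{\bf M}{\bf E}=-\mu{\bf M}{\bf E}+2\mu{\bf M}{\bf E}$ and recombines the derivative sums via the Helmholtz equations in \eqref{Eq:TExij}. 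You instead cancel the ${\bf M}$-contributions of ${\bf E}$ from both sides at the outset, using the algebraic identity $2\mu{\bf E}-\gamma_{k_s}{\bf I}=\mu{\bf E}+\tfrac{\mu}{\rho\omega^2}\nabla_x\nabla_x^TR$, so that the whole theorem collapses to one residual identity for the Hessian of $R$; that identity is then dispatched by the observation that the columns of a Hessian are curl-free, whence ${\bf M}(\partial_x,{\bf n}_x)(\nabla_x\nabla_x^TR)-\partial_{n_x}(\nabla_x\nabla_x^TR)=-{\bf n}_x\nabla_x^T(\Delta_xR)$, together with $\Delta_xR=-k_s^2\gamma_{k_s}+k_p^2\gamma_{k_p}$ and the same divergence formula $\nabla_x\cdot{\bf E}=\tfrac{1}{\lambda+2\mu}\nabla_x^T\gamma_{k_p}$ that the paper derives. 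I checked the final coefficient match: both sides of your reduced identity equal $\tfrac{\lambda+\mu}{\lambda+2\mu}{\bf n}_x\nabla_x^T\gamma_{k_p}$, so the argument closes. Your route avoids all explicit third-derivative bookkeeping and is less error-prone; the paper's route is more mechanical but yields the intermediate component formulas as a by-product. Either is acceptable here since, as you note, everything is classical for $x\ne y$.
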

\begin{proof}
Let
\ben
{\bf E}(x) = \frac{1}{\mu}\gamma_{k_s}(x){\bf I} + \frac{1}{\rho\omega^2}\nabla_x\nabla_x\left[\gamma_{k_s}(x) - \gamma_{k_p}(x)\right],
\enn
where
\ben
\gamma_k(x) = \frac{i}{4}H_0^{(1)}(k|x|)
\enn
with $k=k_s$ or $k_p$.
Then it's sufficient to prove that
\be \label{Eq:TEx}
{\bf T}_{x}{\bf E}(x) = - {\bf n}_{x} \nabla _{x}^{T} R(x) + \frac{{\partial \gamma _{k_{s}} (x)} }{{\partial n_{x}} }{\bf I}+ {\bf M}( \partial _{x} ,{\bf n}_{x})[ 2\mu {\bf E}(x) - \gamma
_{k_{s}}(x){\bf I}].
\en
For some matrix $\bf A$ or vector $\bf B$, we denote $({\bf A})_{ij}$ and $({\bf B})_i$ their Cartesian components respectively. Then we have
\be \label{Eq:DIVEx}
&\quad&(\nabla _{x} \cdot {\bf E}(x))_i \nonumber\\
 &=& \frac{1}{\mu }\left[ {\frac{{\partial {\gamma _{{k_s}}(x)}}}{{\partial {x_i}}} + \frac{1}{{k_s^2}}\frac{{{\partial ^3}}}{{\partial x_i^3}}\left( {{\gamma _{{k_s}}(x)} - {\gamma _{{k_p}}(x)}} \right) + \frac{1}{{k_s^2}}\frac{{{\partial ^3}}}{{\partial {x_i}\partial x_j^2}}\left( {{\gamma _{{k_s}}(x)} - {\gamma _{{k_p}}(x)}} \right)} \right] \nonumber\\
&=& \frac{1}{\mu }\left[ {\frac{{\partial {\gamma _{{k_s}}(x)}}}{{\partial {x_i}}} - \frac{1}{{k_s^2}}\frac{\partial }{{\partial {x_i}}}\left( {k_s^2{\gamma _{{k_s}}(x)} - k_p^2{\gamma _{{k_p}}(x)}} \right)} \right] \nonumber\\
&=& \frac{1}{{\lambda  + 2\mu }}\frac{{\partial {\gamma _{{k_p}}(x)}}}{{\partial {x_i}}}
\en
where $i, j=1$ or 2 but $j \ne i$.
Similarly, for $i, j=1$ or 2 we have
\be \label{Eq:NDEx}
{\left( {\frac{{\partial {\bf E}(x)}}{{\partial {n_x}}}} \right)_{ij}} =\frac{1}{\mu } \sum\limits_{l = 1}^2 {n_x^l\left[ {\frac{{\partial {\gamma _{{k_s}}(x)}}}{{\partial {x_l}}}{\delta _{ij}} + \frac{1}{{k_s^2}}\frac{{{\partial ^3}}}{{\partial {x_i}\partial {x_j}\partial {x_l}}}\left( {{\gamma _{{k_s}}(x)} - {\gamma _{{k_p}}(x)}} \right)} \right]}
\en
and
\be \label{Eq:MEx}
&\quad&{\left( {{\bf M}({\partial _x},{n_x}){\bf E}(x)} \right)_{ij}} \nonumber\\
&=& \frac{1}{\mu }\sum\limits_{l = 1}^2 {\left( {\frac{\partial }{{\partial {x_i}}}n_x^l - \frac{\partial }{{\partial {x_l}}}n_x^i} \right)\left[ {{\gamma _{{k_s}}}{\delta _{lj}} + \frac{1}{{k_s^2}}\frac{{{\partial ^2}}}{{\partial {x_j}\partial {x_l}}}\left( {{\gamma _{{k_s}}(x)} - {\gamma _{{k_p}}(x)}} \right)} \right]}.
\en
Therefore, from \eqref{Eq:DIVEx}, \eqref{Eq:NDEx} and \eqref{Eq:MEx} we have
\be \label{Eq:TExij}
{\left( {{{\bf T}_x}{\bf E}(x)} \right)_{ij}}
&=& (\lambda  + \mu ){\left( {{{\bf n}_x}\left( {\nabla  \cdot {\bf E}(x)} \right)} \right)_{ij}} + \mu {\left( {\frac{{\partial {\bf E}(x)}}{{\partial {n_x}}}} \right)_{ij}} - \mu {\left( {{\bf M}({\partial _x},{{\bf n}_x}){\bf E}(x)} \right)_{ij}}\nonumber\\
&+& 2\mu {\left( {{\bf M}({\partial _x},{{\bf n}_x}){\bf E}(x)} \right)_{ij}}\nonumber\\
&=& n_x^i\frac{\partial }{{\partial {x_j}}}\left[ {\frac{{\lambda  + \mu }}{{\lambda  + 2\mu }}{\gamma _{{k_p}}(x)} + \frac{1}{{k_s^2}}\sum\limits_{l = 1}^2 {\frac{{{\partial ^2}}}{{\partial x_l^2}}\left( {{\gamma _{{k_s}}(x)} - {\gamma _{{k_p}}(x)}} \right)} } \right] + \frac{{\partial {\gamma _{{k_s}}(x)}}}{{\partial {n_x}}}{\delta _{ij}}\nonumber\\
&+& {\left( {{\bf M}{{({\partial _x},{{\bf n}_x})}^T}{\gamma _{{k_s}}}(x)} \right)_{ij}}+2\mu {\left( {{\bf M}({\partial _x},{{\bf n}_x}){\bf E}(x)} \right)_{ij}}\nonumber\\
&=& n_x^i\frac{\partial }{{\partial {x_j}}}\left[ {\frac{{\lambda  + \mu }}{{\lambda  + 2\mu }}{\gamma _{{k_p}}(x)} - \frac{1}{{k_s^2}}\left( {k_s^2{\gamma _{{k_s}}(x)} - k_p^2{\gamma _{{k_p}}(x)}} \right)} \right]+\frac{{\partial {\gamma _{{k_s}}(x)}}}{{\partial {n_x}}}{\delta _{ij}}\nonumber\\
&-& {\left( {{\bf M}{{({\partial _x},{{\bf n}_x})}}{\gamma _{{k_s}}}(x)} \right)_{ij}}+2\mu {\left( {{\bf M}({\partial _x},{{\bf n}_x}){\bf E}(x)} \right)_{ij}}\nonumber\\
&=& -n_x^i\frac{\partial R(x)}{{\partial {x_j}}}+\frac{{\partial {\gamma _{{k_s}}}}}{{\partial {n_x}}}{\delta _{ij}}- {\left( {{\bf M}{{({\partial _x},{{\bf n}_x})}}{\gamma _{{k_s}}(x)}(x)} \right)_{ij}}\nonumber\\
&+& 2\mu {\left( {{\bf M}({\partial _x},{{\bf n}_x}){\bf E}(x)} \right)_{ij}}.
\en
and this further leads to \eqref{Eq:TEx} which completes the proof.
\end{proof}
\begin{theorem} \label{th:A.3}
The operator $K_s$ can be expressed as
\be
 K_{s}{\bf u}(x)&=& \int_{\Gamma}  {\frac{{\partial \gamma _{k_{s}}( x,y) } }{{\partial n_{y}
}}{\bf u}( y)ds_{y}}  - \int_{\Gamma}  {\nabla_{y} R( x,y){\bf n}_{y}^{T} {\bf u}(y)ds_{y}} \nonumber\\
&+&\int_{\Gamma}  {[2\mu {\bf E}( x,y) - \gamma _{k_{s}}  (
x,y){\bf I}]{\bf M}(\partial _{y} ,{\bf n}_{y}){\bf u}( y)ds_{y}}.   \label{Eq:Ks}
\en
\end{theorem}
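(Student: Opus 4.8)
The plan is to read off \eqref{Eq:Ks} from the alternative form of ${\bf T}_x{\bf E}(x,y)$ established in Theorem \ref{th:A.2}, together with a single integration by parts along the closed curve $\Gamma$. Recall from Section \ref{sec:3.1} that $K_s{\bf u}(x)=\int_\Gamma({\bf T}_y{\bf E}(x,y))^\top{\bf u}(y)\,ds_y$, so everything reduces to rewriting the matrix kernel $({\bf T}_y{\bf E}(x,y))^\top$. Since ${\bf E}$ is symmetric and translation invariant, ${\bf E}(x,y)={\bf E}(y,x)$, and likewise $R(x,y)=R(y,x)$ and $\gamma_{k_s}(x,y)=\gamma_{k_s}(y,x)$; interchanging the roles of $x$ and $y$ in \eqref{Eq:TxExy} therefore gives
\be
{\bf T}_y{\bf E}(x,y)&=&-{\bf n}_y\nabla_y^T R(x,y)+\frac{\partial\gamma_{k_s}(x,y)}{\partial n_y}{\bf I}\no\\
&&+\,{\bf M}(\partial_y,{\bf n}_y)\big[2\mu{\bf E}(x,y)-\gamma_{k_s}(x,y){\bf I}\big].
\en

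I would then transpose this identity term by term. The middle summand is a scalar multiple of ${\bf I}$ and is unchanged; the first summand becomes $-\nabla_y R(x,y){\bf n}_y^T$; and the third becomes $\big({\bf M}(\partial_y,{\bf n}_y)[2\mu{\bf E}(x,y)-\gamma_{k_s}(x,y){\bf I}]\big)^\top$. Inserting this into the boundary integral for $K_s$ reproduces the first two integrals of \eqref{Eq:Ks} verbatim, so the proof is complete once one shows that the integral over $\Gamma$ of $\big({\bf M}(\partial_y,{\bf n}_y)[2\mu{\bf E}(x,y)-\gamma_{k_s}(x,y){\bf I}]\big)^\top{\bf u}(y)$ equals the integral over $\Gamma$ of $[2\mu{\bf E}(x,y)-\gamma_{k_s}(x,y){\bf I}]\,{\bf M}(\partial_y,{\bf n}_y){\bf u}(y)$, which is precisely the last integral in \eqref{Eq:Ks}.

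For this transfer identity I would use \eqref{Eq:M2} from Lemma \ref{lm:A.1}: acting column-wise on matrix-valued functions, ${\bf M}(\partial_y,{\bf n}_y)$ is the arc-length derivative composed with $A=[0,-1;1,0]$, so that $A^\top=-A$. Because $\Gamma$ is smooth and closed, integrating by parts in arc length produces no boundary contributions; the sign picked up by moving the derivative off the kernel onto ${\bf u}$ then cancels the sign coming from $A^\top=-A$, and the symmetry of $2\mu{\bf E}(x,y)-\gamma_{k_s}(x,y){\bf I}$ (inherited from that of ${\bf E}$) removes the last transpose. Collecting the three pieces yields \eqref{Eq:Ks}.

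I expect the only genuine difficulty to be bookkeeping rather than anything conceptual: one must fix unambiguously how the vector G\"unter operator ${\bf M}(\partial_y,{\bf n}_y)$ acts on the \emph{matrix} kernel and how its transpose is formed --- the component identities used in the proof of Theorem \ref{th:A.2} are the convenient tool here --- and then track the two sign changes, from the integration by parts and from $A^\top=-A$, so that they cancel instead of reinforcing. A minor point worth recording is the vanishing of the boundary terms, which relies on the closedness and smoothness of $\Gamma$ assumed throughout; strictly, one performs the computation first for smooth densities ${\bf u}$ and then passes to general ${\bf u}\in(H^{1/2}(\Gamma))^2$ by density together with the mapping properties of the operators involved.
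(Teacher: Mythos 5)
Your proposal is correct and follows essentially the same route as the paper: obtain ${\bf T}_y{\bf E}(x,y)$ as a corollary of Theorem \ref{th:A.2} by symmetry of the kernels, transpose term by term in the definition of $K_s$, and then shift the G\"unter derivative onto ${\bf u}$ via \eqref{Eq:M2} and an integration by parts along the closed curve, exactly as in the paper's identities \eqref{Eq:MgamaTu}--\eqref{Eq:METu}. The only cosmetic difference is that you handle $2\mu{\bf E}-\gamma_{k_s}{\bf I}$ in one stroke using its symmetry, whereas the paper treats the $\gamma_{k_s}{\bf I}$ and ${\bf E}$ pieces separately.
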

\begin{proof}
As a corollary of Theorem \ref{th:A.2}, we have
\ben
{\bf T}_{y}{\bf E}(x,y) &=& - {\bf n}_{y} \nabla _{y}^{T} R(x,y) + \frac{{\partial \gamma _{k_{s}} (x,y)} }{{\partial n_{y}} }{\bf I}+ {\bf M}( \partial _{y} ,{\bf n}_{y})\left[2\mu {\bf E}(x,y) - \gamma
_{k_{s}}(x,y){\bf I}\right].
\enn
Therefore,
\be\label{Eq:Ksu}
 K_{s}{\bf u}(x)&=& \int_{\Gamma}  {\frac{{\partial \gamma _{k_{s}}( x,y) } }{{\partial n_{y}
}}{\bf u}( y)ds_{y}}  - \int_{\Gamma}  {\left({\bf n}_{y} \nabla _{y}^{T} R(x,y)\right)^{T} {\bf u}(y)ds_{y}}\nonumber\\
&+&\int_{\Gamma}  {\left({\bf M}( \partial _{y} ,{\bf n}_{y})\left[2\mu {\bf E}(x,y) - \gamma_{k_{s}}(x,y){\bf I}\right]\right)^T{\bf u}( y)ds_{y}}.
\en
Regarding the last integral in  \eqref{Eq:Ksu}, integration by parts gives
\be\label{Eq:MgamaTu}
\int_\Gamma  {{{\left({{\bf M}({\partial_y},{{\bf n}_y}) {\gamma_{{k_s}}}(x,y)}\right)}^T}{\bf u}(y)d{s_y}}
&=&  - \int_\Gamma {{\bf M}({\partial _y},{{\bf n}_y}){\gamma _{{k_s}}}(x,y){\bf u}(y)d{s_y}}\nonumber\\
&=& - \int_\Gamma  {A\frac{{d{\gamma _{{k_s}}(x,y)}}}{{d{s_y}}}{\bf u}(y)d{s_y}}\nonumber\\
&=&\int_\Gamma  {{\gamma _{{k_s}}}(x,y){\bf M}({\partial _y},{{\bf n}_y}){\bf u}(y)d{s_y}}
\en
and similarly,
\be\label{Eq:METu}
\int_\Gamma  {{{\left( {{\bf M}({\partial _y},{{\bf n}_y}){\bf E}(x,y)} \right)}^T}{\bf u}(y)d{s_y}}  = \int_\Gamma  {{\bf E}(x,y){\bf M}({\partial _y},{{\bf n}_y}){\bf u}(y)d{s_y}}.
\en
The proof is hence established by a combination of \eqref{Eq:Ksu}, \eqref{Eq:MgamaTu} and \eqref{Eq:METu}.
\end{proof}

{We are now in a position to complete the proof of Theorem \ref{th:main}}.
We know from Theorem \ref{th:A.3} that
\be
\label{Eq:Ws.2}
W_s {\bf u}(x)&=&-{\bf T}_x K_s {\bf u}(x)\nonumber\\
&=& \int_{\Gamma}  {{\bf T}_x\left(\nabla_{y} R( x,y){\bf n}_{y}^{T} {\bf u}(y)\right)ds_{y}}-\int_{\Gamma}  {{\bf T}_x\left(\frac{{\partial \gamma _{k_{s}}( x,y) } }{{\partial n_{y}}}{\bf u}( y)\right)ds_{y}} \nonumber\\
&-&\int_{\Gamma}  {{\bf T}_x\left((2\mu {\bf E}( x,y) - \gamma _{k_{s}}  (
x,y){\bf I}){\bf M}(\partial _{y} ,{\bf n}_{y}){\bf u}(y)\right)ds_{y}}\nonumber\\
&=& {\bf g}_1(x)-{\bf g}_2(x)-{\bf g}_3(x),
\en
where
\ben
{\bf g}_1(x)=\int_{\Gamma}  {{\bf T}_x\left(\nabla_{y} R( x,y){\bf n}_{y}^{T} {\bf u}(y)\right)ds_{y}},
\enn
\ben
{\bf g}_2(x)=\int_{\Gamma}  {{\bf T}_x\left(\frac{{\partial \gamma _{k_{s}}( x,y) } }{{\partial n_{y}}}{\bf u}( y)\right)ds_{y}}
\enn
and
\ben
{\bf g}_3(x)=\int_{\Gamma}  {{\bf T}_x\left((2\mu {\bf E}( x,y) - \gamma _{k_{s}}  (
x,y){\bf I}){\bf M}(\partial _{y} ,{\bf n}_{y}){\bf u}(y)\right)ds_{y}}.
\enn
Therefore, \eqref{Eq:TracationOper1} implies that
\be
\label{Eq:g1.11}
{\bf g}_1(x)&=& \int_{\Gamma}  {{\bf T}_x\left(\nabla_{y} R( x,y){\bf n}_{y}^{T} {\bf u}(y)\right)ds_{y}}\nonumber\\
&=& -(\lambda+\mu)\int_{\Gamma}{{\bf n}_x{{\bf n}_y}^T\Delta R(x,y){\bf u}(y)ds_{y}}\nonumber\\
&+& 2\mu\int_{\Gamma}  { {\bf M}(\partial _{y} ,{\bf n}_{y})\nabla_{x} R( x,y){\bf n}_{x}^{T} {\bf u}(y)ds_{y}}\nonumber \\
&+& \mu\int_{\Gamma}  { \left(\frac{{\partial}}{{\partial n_{x}} }\left(\nabla_{y} R( x,y)\right)-{\bf M}(\partial _{x} ,{\bf n}_{x})\nabla_{y} R( x,y)\right){\bf n}_{y}^{T}{\bf u}(y)ds_{y}}\nonumber\\
&+& 2\mu\int_{\Gamma}  { \left({\bf M}(\partial _{x} ,{\bf n}_{x})\nabla_{y} R( x,y){\bf n}_{y}^{T}-{\bf M}(\partial _{y} ,{\bf n}_{y})\nabla_{x} R( x,y){\bf n}_{x}^{T}\right){\bf u}(y)ds_{y}},
\en
or
\be
\label{Eq:g1.12}
{\bf g}_1(x)&=& -(\lambda+\mu)\int_{\Gamma}{{\bf n}_x{{\bf n}_y}^T\Delta R(x,y){\bf u}(y)ds_{y}}\nonumber\\
&+& 2\mu\int_{\Gamma}  { {\bf M}(\partial _{x} ,{\bf n}_{x})\nabla_{y} R( x,y){\bf n}_{y}^{T} {\bf u}(y)ds_{y}}\nonumber \\
&+& \mu\int_{\Gamma}  { \left(\frac{{\partial}}{{\partial n_{x}} }\left(\nabla_{y} R( x,y)\right)-{\bf M}(\partial _{x} ,{\bf n}_{x})\nabla_{y} R( x,y)\right){\bf n}_{y}^{T}{\bf u}(y)ds_{y}}.
\en
We are able to show that
\ben
\frac{{\partial}}{{\partial n_{x}} }\left(\nabla_{y} R( x,y)\right)-{\bf M}(\partial _{x} ,{\bf n}_{x})\nabla_{y} R( x,y)=-\Delta R(x,y){\bf n}_{x}
\enn
and
\ben
&\quad& {\bf M}(\partial _{x} ,{\bf n}_{x})\nabla_{y} R( x,y){\bf n}_{y}^{T}-{\bf M}(\partial _{y} ,{\bf n}_{y})\nabla_{x} R( x,y){\bf n}_{x}^{T}\\
&=& A\mu k_s^2\left({\bf E}(x,y)-\frac{1}{\mu}\gamma_{k_s}(x,y){\bf I} \right){\bf n}_{x}^{T} {\bf t}_{y},
\enn
and these further yield
\be
\label{Eq:g1.2}
{\bf g}_1(x)&=&(\lambda+2\mu)\int_{\Gamma}{\left[ {k_s^2{\gamma _{{k_s}}}(x,y) - k_p^2{\gamma _{{k_p}}}}(x,y) \right]{\bf n}_x{{\bf n}_y^T}{\bf u}(y)ds_{y}} \nonumber \\
&+& 2\mu^2 k_s^2\int_{\Gamma}{ \left[ {\begin{array}{*{20}{c}}
0&{ - 1}\\
1&0
\end{array}} \right]\left({\bf E}(x,y)-\frac{1}{\mu}\gamma_{k_s}(x,y){\bf I} \right){\bf n}_{x}^{T} {\bf t}_{y} {\bf u}(y)ds_{y}}\nonumber \\
&+&2\mu\int_{\Gamma}  { {\bf M}(\partial _{y} ,{\bf n}_{y})\nabla_{x} R( x,y){\bf n}_{x}^{T} {\bf u}(y)ds_{y}},
\en
or
\be
\label{Eq:g1.22}
{\bf g}_1(x)&=& (\lambda+2\mu)\int_{\Gamma}{\left[ {k_s^2{\gamma _{{k_s}}}(x,y) - k_p^2{\gamma _{{k_p}}}}(x,y) \right]{\bf n}_x{{\bf n}_y^T}{\bf u}(y)ds_{y}} \nonumber \\
&+& 2\mu\int_{\Gamma}  {{\bf M}(\partial_x,{\bf n}_x)(\nabla_{y} R( x,y)){\bf n}_{y}^{T} {\bf u}(y)ds_{y}}.
\en
Similarly, from \eqref{Eq:FluidRegularization} and \eqref{Eq:TracationOper1} we have
\be
\label{Eq:g2.1}
{\bf g}_2(x)&=&\int_{\Gamma}  {{\bf T}_x\left(\frac{{\partial \gamma _{k_{s}}( x,y) } }{{\partial n_{y}}}{\bf u}( y)\right)ds_{y}}\nonumber\\
&=& (\lambda+\mu)\int_{\Gamma}  {{\bf n}_x\frac{{\partial } }{{\partial n_{y}}} \left({\nabla_{x}}^T\gamma _{k_{s}}( x,y)\right){\bf u}( y)ds_{y}}\nonumber\\
&+& \mu\int_{\Gamma}{\frac{{d \gamma _{k_{s}}(x,y) } }{{ds_x}}\frac{{d {\bf u}(y) } }{{ds_y}}ds_y }+\mu k_s^2\int_{\Gamma}{{\bf n}_{x}^{T}{\bf n}_{y}\gamma _{k_{s}}(x,y){\bf u}(y)ds_y }\nonumber\\
&+& \mu\int_{\Gamma}  { {\bf M}(\partial _{x} ,{\bf n}_{x})\left(\frac{{\partial \gamma _{k_{s}}( x,y) } }{{\partial n_{y}}}\right){\bf u}( y)ds_{y}}.
\en
In addition, from \eqref{Eq:TracationOper1}, there holds
\ben
{\bf g}_3(x)&=& \int_{\Gamma}  {{\bf T}_x\left((2\mu {\bf E}( x,y) - \gamma _{k_{s}}  (
x,y){\bf I}){\bf M}(\partial _{y} ,{\bf n}_{y}){\bf u}(y)\right)ds_{y}} \\
&=& 2\mu\int_{\Gamma}  {{\bf T}_x {\bf E}( x,y){\bf M}(\partial _{y} ,{\bf n}_{y}){\bf u}(y)ds_{y}}\\
&-&(\lambda+\mu)\int_{\Gamma}  {{\bf n}_x \nabla_{x}^T\gamma _{k_{s}}(x,y){\bf M} (\partial _{y} ,{\bf n}_{y}){\bf u}(y)ds_{y}}\\
&-& \mu\int_{\Gamma}  { \frac{{\partial}\gamma _{k_{s}}}{{\partial n_{x}}}(x,y){\bf M}(\partial _{y} ,{\bf n}_{y}){\bf u}(y)ds_{y}}\\
&-& \mu\int_{\Gamma}  { {\bf M}(\partial _{x} ,{\bf n}_{x})\gamma _{k_{s}}(x,y){\bf M}(\partial _{y} ,{\bf n}_{y}){\bf u}(y)ds_{y}}.\\
\enn
Then \eqref{Eq:TxExy} leads to
\be
\label{Eq:g3.2}
{\bf g}_3(x)&=& \mu \int_{\Gamma}  {\frac{{\partial \gamma _{k_{s}}(x,y) } }{{\partial n_{x}
}}{\bf M}(\partial _{y} ,{\bf n}_{y}){\bf u}(y)ds_{y}}\nonumber\\
&-& 2\mu \int_{\Gamma}  {{\bf n}_{x} \nabla _{x}^{T} R(x,y){\bf M}(\partial _{y} ,{\bf n}_{y}){\bf u}(y)ds_{y}} \nonumber \\
&+& 4\mu^2 \int_{\Gamma}  {{\bf M}(\partial _{x} ,{\bf n}_{x}){\bf E}(x,y){\bf M}( \partial _{y} ,{\bf n}_{y}){\bf u}(y)ds_{y}}\nonumber \\
&-& 3\mu\int_{\Gamma}  { {\bf M}(\partial _{x} ,{\bf n}_{x})\gamma _{k_{s}}(x,y){\bf M}(\partial _{y} ,{\bf n}_{y}){\bf u}(y)ds_{y}} \nonumber \\
&-& (\lambda+\mu)\int_{\Gamma}  {{\bf n}_x \nabla_{x}^T\gamma _{k_{s}}(x,y){\bf M} (\partial _{y} ,{\bf n}_{y}){\bf u}(y)ds_{y}}.
\en
Let
\be
\label{Eq:g4}
{\bf g}_4(x)=\int_\Gamma  {{\bf M}({\partial _x},{{\bf n}_x}){\gamma _{{k_s}}}(x,y){\bf M}({\partial _y},{{\bf n}_y}){\bf u}(y)d{s_y}},
\en
\be
\label{Eq:g5}
{\bf g}_5(x)=\int_\Gamma  {{\bf M}({\partial _x},{{\bf n}_x}){\bf E}(x,y){\bf M}({\partial _y},{{\bf n}_y}){\bf u}(y)d{s_y}}.
\en
From \eqref{Eq:M2}, we arrive at
\be
\label{Eq:g4.1}
{\bf g}_4(x)&=& \int_\Gamma  {\frac{{d{\gamma _{{k_s}}(x,y)}}}{{d{s_x}}}A^2\frac{{d{\bf u}(y)}}{{d{s_y}}}d{s_y}}= -\int_\Gamma  {\frac{{d{\gamma _{{k_s}}(x,y)}}}{{d{s_x}}}\frac{{d{\bf u}(y)}}{{d{s_y}}}d{s_y}}
\en
\be
\label{Eq:g5.1}
{\bf g}_5(x)&=& \int_\Gamma  {\frac{d}{{d{s_x}}}A\left[ {\frac{1}{\mu }{\gamma _{{k_s}}}(x,y)I + \frac{1}{{\rho {\omega ^2}}}{\nabla _y}{\nabla _y}R(x,y)} \right]A\frac{{d{\bf u}}(y)}{{d{s_y}}}d{s_y}}\nonumber \\
&=& -\frac{1}{\mu }\int_\Gamma  {\frac{{d{\gamma _{{k_s}}(x,y)}}}{{d{s_x}}}\frac{{d{\bf u}(y)}}{{d{s_y}}}d{s_y}}\nonumber \\
&+& \frac{1}{{\mu k_s^2}}\int_\Gamma  {\frac{d}{{d{s_x}}}\left( {{\nabla _y}{\nabla _y}R(x,y) - {\Delta _y}R(x,y)} \right)\frac{{d{\bf u}(y)}}{{d{s_y}}}d{s_y}}\nonumber \\
&=& \int_\Gamma  {\frac{{d{\bf E}(x,y)}}{{d{s_x}}}\frac{{d{\bf u}(y)}}{{d{s_y}}}d{s_y}}-\frac{2}{\mu }\int_\Gamma  {\frac{{d{\gamma _{{k_s}}(x,y)}}}{{d{s_x}}}\frac{{d{\bf u}}}{{d{s_y}}}d{s_y}}\nonumber \\
&+& \frac{1}{{\mu k_s^2}}\int_\Gamma  {\frac{d}{{d{s_x}}}\left( {k_s^2{\gamma _{{k_s}}}(x,y) - k_p^2{\gamma _{{k_p}}}}(x,y)\right)\frac{{d{\bf u}(y)}}{{d{s_y}}}d{s_y}}\nonumber \\
&=& \int_\Gamma  {\frac{{d{\bf E}(x,y)}}{{d{s_x}}}\frac{{d{\bf u}(y)}}{{d{s_y}}}d{s_y}}-\frac{1}{\lambda+2\mu }\int_\Gamma  {\frac{{d{\gamma _{{k_p}}(x,y)}}}{{d{s_x}}}\frac{{d{\bf u}(y)}}{{d{s_y}}}d{s_y}}\nonumber \\
&-& \frac{1}{\mu }\int_\Gamma  {\frac{{d{\gamma _{{k_s}}(x,y)}}}{{d{s_x}}}\frac{{d{\bf u}(y)}}{{d{s_y}}}d{s_y}}.
\en
Therefore, combining \eqref{Eq:g3.2}, \eqref{Eq:g4.1} and \eqref{Eq:g5.1} yields
\be
\label{Eq:g3.3}
{\bf g}_3(x)&=& \mu \int_{\Gamma}  {\frac{{\partial \gamma _{k_{s}} (x,y)} }{{\partial n_{x}
}}{\bf M}(\partial _{y} ,{\bf n}_{y}){\bf u}(y)ds_{y}}
- 2\mu \int_{\Gamma}  {{\bf n}_{x} \nabla _{x}^{T} R(x,y){\bf M}(\partial _{y} ,{\bf n}_{y}){\bf u}(y)ds_{y}}
\nonumber\\
&+& 4\mu^2\int_\Gamma  {\frac{{d{\bf E}(x,y)}}{{d{s_x}}}\frac{{d{\bf u}(y)}}{{d{s_y}}}d{s_y}}-\mu\int_\Gamma  {\frac{{d{\gamma _{{k_s}}(x,y)}}}{{d{s_x}}}\frac{{d{\bf u}(y)}}{{d{s_y}}}d{s_y}}\nonumber \\
&-& \frac{4\mu^2}{\lambda+2\mu }\int_\Gamma  {\frac{{d{\gamma _{{k_p}}(x,y)}}}{{d{s_x}}}\frac{{du}(y)}{{d{s_y}}}d{s_y}}\nonumber\\
&-& (\lambda+\mu)\int_{\Gamma}  {{\bf n}_x \nabla_{x}^T\gamma _{k_{s}}(x,y){\bf M} (\partial _{y} ,{\bf n}_{y}){\bf u}(y)ds_{y}}.
\en
Additionally, let
\ben
{\bf h}_{1}(x) = \int_{\Gamma}  {\left[{\bf M}(\partial _{x} ,{\bf n}_{x})\frac{{\partial \gamma _{k_{s}}(x,y) } }{{\partial n_{y}} }{\bf u}(y) + \frac{{\partial \gamma _{k_{s}}(x,y) } }{{\partial n_{x}} }{\bf M}(\partial _{y} ,{\bf n}_{y}){\bf u}(y)\right]ds_{y}},
\enn
\ben
{\bf h}_{2}(x) = \int_{\Gamma}  {{\bf n}_{x}\left[\frac{{\partial
}}{{\partial n_{y}} }(\nabla _{x}^{T} \gamma _{k_{s}}(x,y)){\bf u}(y) - \nabla _{x}^{T} \gamma _{k_{s}}(x,y){\bf M}(\partial _{y} ,{\bf n}_{y}){\bf u}(y)\right]ds_{y}}.
\enn
Due to integration by parts, we obtain
\be\label{Eq:h1.1}
{\bf h}_{1}(x) &=& \int_{\Gamma}  {\left[{\bf M}(\partial _{x} ,{\bf n}_{x})\frac{{\partial \gamma _{k_{s}}(x,y) } }{{\partial n_{y}} }-{\bf M}(\partial _{y} ,{\bf n}_{y})\frac{{\partial \gamma _{k_{s}}(x,y) } }{{\partial n_{x}} }\right]{\bf u}(y)ds_{y}}  \nonumber\\
&=& - k_{s}^{2} \int_{\Gamma}  {\gamma _{k_{s}}(x,y)A{\bf n}_{x}^{T} {\bf t}_{y}{\bf u}(y)ds_{y}}
\en
and
\be \label{Eq:h2.1}
{\bf h}_{2}(x) &=& \int_{\Gamma}{{\bf n}_{x}\left(\frac{{\partial
}}{{\partial n_{y}} }(\nabla _{x}^{T} \gamma _{k_{s}}(x,y)){\bf u}(y) + \frac{{d}}{{ds_{y}} }(\nabla _{x}^{T} \gamma _{k_{s}}(x,y))A{\bf u}(y)\right)ds_{y}} \nonumber \\
&=& k_{s}^{2} \int_{\Gamma}  {\gamma _{k_{s}}(x,y){\bf n}_{x}{\bf n}_{y}^{T} {\bf u}(y)ds_{y}}.
\en
Hence the proof of \eqref{Eq:Ws.1} is completed by following a combination of \eqref{Eq:g1.2}, \eqref{Eq:g2.1}, \eqref{Eq:g3.3}, \eqref{Eq:h1.1} and \eqref{Eq:h2.1}. In addition, \eqref{Eq:Ws.12} is a consequence of \eqref{Eq:g1.22}, \eqref{Eq:g2.1}, \eqref{Eq:g3.3}, \eqref{Eq:h1.1} and \eqref{Eq:h2.1}.
{\hfill $\Box$}

\section*{Acknowledgments}
The work of T. Yin is partially supported by the NSFC Grant (11371385). The work of L. Xu is partially supported by the NSFC Grant (11371385), the Start-up fund of Youth 1000 plan of China and that of Youth 100 plan of Chongqing University.


\begin{thebibliography}{00}
\bibitem{BXY} G. Bao, L. Xu, T. Yin, A boundary element method for the exterior elastic scattering problem in two dimensions, submitted.
\bibitem{BLR14} F. Bu, J. Lin, F. Reitich, A fast and high-order method for the three-dimensional elastic wave scattering problem, J. Comput. Phy. 258 (2014) 856-870.
\bibitem{BHPS03} A. Buffa, R. Hiptmair, T. von Petersdorff, C. Schwab, Boundary element methods for maxwell transmission problems in lipschitz domains, Numer. Math. 95 (2003) 459-485.
\bibitem{BM71} A.J. Burton, G.F. Miller, The application of integral equation methods to the numerical solution of some exterior boundary-value problem, Proc. Roy. Soc. London Ser. A 323 (1971) 201-210.
\bibitem{CS85} M. Costabel, E. Stephan, A direct boundary integral equation method for transmission problems, J. Math. Anal. Appl. 106 (1985) 205-220.
\bibitem{CS88} M. Costabel, E. Stephan, Strongly elliptic boundary integral equations for electromagnetic transmission problems, Proc. Royal Soci. Edinburgh 109 (1988) 271-296.
\bibitem{DSM12} C. Dmo\'{i}nguez, E. Stephan, M. Maischak, A FE-BE coupling for a fluid-structure interaction problem: hierarchical a posteriori error estimates, Numer. Methods Partial Differential Equations 28 (2012) 1417-1439.
\bibitem{DSM121} C. Dmo\'{i}nguez, E. Stephan, M. Maischak, Fe/be coupling for an acoustic fluid-structure interaction problem. residual a posteriori error estimates, Internat. J. Numer. Methods Engrg. 89 (2012) 299-322.
\bibitem{GHM09} G. N. Gatica, G. C. Hsiao, S. Meddahi, A residual-based a posteriori error estimator for a two-dimensional fluid-solid interaction problem, Numer. Math. 114 (2009) 63-106.
\bibitem{GMM07} G. N. Gatica, A. Marquez, and S. Meddahi, Analysis of the coupling of primal and dual-mixed finite element methods for a two-dimensional fluid-solid interaction problem, SIAM J. Numer. Anal. 45 (2007) 2072-2097.
\bibitem{GMM09} G. N. Gatica, A. M\'{a}rquez, S. Meddahi, Analysis of the coupling of BEM, FEM and mixed-FEM for a two-dimensional fluid-solid interaction problem, Appl. Numer. Math. 59 (11) (2009) 2735-2750.
\bibitem{H94} G. C. Hsiao, On the boundary-field equation methods for fluid-structure interactions, in Problems and Methods in Mathematical Physics, L. Jentsch and F. Tr¡§oltzsch, eds., (1994), pp. 79-88.
\bibitem{HKR00} G.C. Hsiao, R.E. Kleinman, G.F. Roach, Weak solutions of fluid-solid interaction problems, Math. Nachr. 218 (2000) 139-163.
\bibitem{HKS89} G. C. Hsiao, R. E. Kleinman, L. S. Schuetz, On variational formulations of boundary value problems for fluid-solid interactions problems, in Elastic Wave Propagation, M. S. McCarthy and M.F. Hayes, eds., (1989), pp. 321-326.
\bibitem{HW89} G.C. Hsiao, W.L. Wendland, Exterior boundary value problems in elastodynamics, in: M.F. McCarthy, M.A. Hayes (Eds.), Elastic Wave Propagation (1989) 545-550.
\bibitem{HW04} G. C. Hsiao, W. L. Wendland, Boundary element methods: Foundation and error analysis, Encyclopedia of Computational Mechanics, John Wiley and Sons, Ltd., 1 (2004) 339-373.
\bibitem{HW08} G.C. Hsiao, W.L. Wendland, Boundary Integral Equations, Applied Mathematical Sciences, Vol. 164, Springer-verlag, 2008
\bibitem{HX11} G.C. Hsiao, L. Xu, A system of boundary integral equations for the transmission problem in acoustics, J. Comput. Appl. Math. 61 (2011) 1017-1029.
\bibitem{JDS83} D. S. Jones, Low-frequency scattering by a body in lubricated contact, Quart. J. Mech. Appl. Math. 36 (1983) 111-137.
\bibitem{KSU14} A. Kimeswenger, O. Steinbach, G. Unger, Coupled finite and boundary element methods for fluid-solid interaction eigenvalue problems, SIAM J. Numer. Anal. 52 (2014) 2400-2414.
\bibitem{KM88} R.E. Kleinman, P.A. Martin, On single integral equations for the transmission problem of acoustics, SIAM J. Appl. Math. 48 (1988) 307-325.
\bibitem{KGBB79} V.D. Kupradze, T.G. Gegelia, M.O. Basheleishvili, T.V. Burchuladze, Three-Dimensional problems of the mathematical theory of elasticity and thermoelasticity, North Holland, Amsterdam 1979.
\bibitem{LM95} C.J. Luke, P.A. Martin, Fluid-solid interaction: acoustic scattering by a smooth elastic obstacle, SIAM J. Appl. Math. 55 (4) (1995) 904-922.
\bibitem{MMS04} A. Marquez, S. Meddahi, V. Selgas, A new BEM-FEM coupling strategy for two-dimensionam fluid-solid interaction problems, J. Comput. Phy. 199 (2004) 205-220.
\bibitem{M49} A.W. Maue, Zur Formulierung eines allgemeinen Beugungsproblems durch eine Integralgleichung, Z. Phys. 126 (1949) 601-618.
\bibitem{M66} K.M. Mitzner, Acoustic scattering from an interface between media of greatly different density, J. Math. Phys. 7 (1966) 2053-2060.
\bibitem{YHXZ16} T. Yin, G. Hu, L. Xu, B. Zhang, Near-field imaging of obstacles with the factorization method: fluid-solid interaction, Inverse Probl. 32 (2016) 015003.
\bibitem{YX} T. Yin, L. Xu, A priori error estimates of the Fourier-series-based DtN-FEM for fluidstructure interaction problem, submitted.
\bibitem{YRX} T. Yin, A. Rathsfeld, L. Xu, The BIE-based DtN-FEM for the fluid-solid interaction problems, submitted.
\end{thebibliography}
\end{document}